\newtheorem{theorem}{Theorem}[section]
\newtheorem{proposition}[theorem]{Proposition}
\newtheorem{corollary}[theorem]{Corollary}
\newtheorem{lemma}[theorem]{Lemma}
\newtheorem{definition}[theorem]{Definition}
\newtheorem{remark}[theorem]{Remark}
\numberwithin{equation}{section}
\title[Asymptotic expansion at zero-resonance ]
{Time Asymptotic expansions of solution for fourth-order Schr\"odinger equation with zero resonance or eigenvalue}
\author{Hongliang Feng, \ Zhao Wu \ and Xiaohua Yao}
\address {Hongliang Feng, School of Mathematics, Sun Yat-sen University, Guangzhou, 510275, P. R. China}
\email{fenghongliang@aliyun.com}
\address{Zhao Wu, School of Mathematics and Statistics, Central China
Normal University, Wuhan, 430079, P. R. China}
\email{wuzhao218@yahoo.com}
\address{Xiaohua Yao, Department of Mathematics and  Hubei Province Key Laboratory of Mathematical Physics,
 Central China Normal University, Wuhan, 430079, P.R. China}
\email{yaoxiaohua@mail.ccnu.edu.cn}
\date{\today}
\keywords{Fourth-order Schr\"odinger operator, lower energy asymptotic expansion, classification of resonance subspaces, Kato-Jensen decay estimates}
\begin{document}

\begin{abstract}\baselineskip=11pt
 In this paper, we first deduce  the  asymptotic expansions of resolvent of $H=(-\Delta)^2+V$ with the  presence of  resonance or eigenvalue at the degenerate zero threshold  for $d\geq5$. In particular, we identify these resonance spaces for full kinds of zero resonances. As a consequence, we then establish the {\it time asymptotic expansions} and {\it Kato-Jensen estimates} for the solution of fourth-order Schr\"odinger equation under the presence of zero resonance or eigenvalue.
\end{abstract}

\maketitle
\baselineskip=18pt
\section{Introduction}
\subsection{Backgrounds}
In this paper, we consider the fourth-order Schr\"odinger operator
$$H=(-\Delta)^2+V,\,\,  H_{0}=(-\Delta)^2$$
where $V(x)$ is a real valued function $\mathbf{R}^d$  satisfying $|V(x)|\lesssim (1+|x|)^{-\beta}$ for some $\beta>0$ depending on $d$.
It was well-known that the biharmonic operator has appeared in many interesting contexts£¬ which include  the (nonlinear) fourth-order Schr\"odinger equation introduced by Karpman \cite{Karpman1, Karpman2}, the nonlinear beam equation or fourth-order wave equation involving  in the study of plate and beams \cite{Love}, in  the study of interaction of water waves \cite{Bretherton}, and in the study of the motion of a suspension bridge, see e.g.  MacKenna and Walter \cite{MW1}.  Moreover, they naturally appear in many other areas of mathematics too, including conformal
geometry (Paneitz operator, Q-curvature \cite{Chang1, Chang2}), free boundary problems\cite{Adams}, and have enjoyed increasing attention in the recent years (see e.g. Mayboroda and  Maz'ya \cite{May-Maz, May-Maz2} and therein references).

In the sequel, we aim to establish the time asymptotic expansions of  $e^{itH}$ with the presence of zero resonance or eigenvalue as  $t$ goes to infinity.
Recall that for Schr\"odinger operator $-\Delta+V$,  Kato and Jensen in the pioneering work \cite{JK} first established the time asymptotic expansion
\begin{equation}\label{asmpexp}
e^{it(-\Delta+V)}=\sum_{j=0}^{N}e^{it\lambda_{j}}P_{j}+t^{-1/2}C_{-1}+t^{-3/2}C_{0}+\cdots
\end{equation}
as $t\rightarrow\infty$  in the weighted Lebesgue spaces for dimension $d=3$. Here the $\lambda_{j}$ are nonnegative eigenvalues of $-\Delta+V$ with associated eigen-projection $P_{j}$.   The spectral property at zero threshold of $-\Delta+V$ affects the leading term of the asymptotic expansion.  In \cite{JK}, they pointed that $C_{-1}=0$  if $0$ is a regular point. However, the operator $C_{-1}$ does not vanish  if $0$ is purely a resonance of $-\Delta+V$.
In \cite{JK},  they   further discussed the following four cases: {\it zero is a regular point; zero is purely a resonance; zero is purely an eigenvalue; zero is both resonance and eigenvalue}. Zero is a  regular point of $-\Delta+V$ means zero is neither an eigenvalue nor a resonance.
The main idea behind asymptotic expansion \eqref{asmpexp} is as following:
\begin{equation*}
\left.
\begin{array}{cc}
\text{Lower energy expansion} \stackrel{LAP}{\Longrightarrow} dE(\lambda)(\lambda\rightarrow0)\\
\text{Higher energy decay estimate} \stackrel{LAP}{\Longrightarrow} dE(\lambda)(\lambda\rightarrow+\infty)
\end{array}
\right\}
{\Longrightarrow} \text{time asymptotic expansion}.
\end{equation*}
The lower energy expansion means the asymptotic expansion of perturbed resolvent $R(-\Delta+V; z)$ for $z$ near zero in the weighted Lebesgue spaces. Higher energy decay estimate means the decay estimate of $R(-\Delta+V; z)$ for $z\rightarrow \infty$ in suitable weighted Lebesgue spaces. LAP is short for {\it limiting absorption principle} which is about the existence and continuity of $R(-\Delta+V; z)$ when $z$ close to the positive real axis, e.g. see \cite{Agmon, Ben-Devi}.

The time asymptotic expansion \eqref{asmpexp} implies the following time decay estimate of $-\Delta+V$:
\begin{equation*}
\big\|(1+|x|)^{-s}e^{it(-\Delta+V)}P_{ac}(1+|x|)^{-s'}f\big\|_{L^2(\mathbf{R}^3)}\lesssim (1+|t|)^{-3/2}\|f\|_{L^2(\mathbf{R}^3)}, \,\, t\in \mathbf{R},
\end{equation*}
with zero is a regular point of $-\Delta+V$ and suitable $s, s'>0$.
However, with the presence of zero resonance or eigenvalue, Kato and Jensen shows that the time decay rate of Kato-Jensen dispersive estimate is slower than the regular case. The time decay is $(1+|t|)^{-3/2}$ in the regular case, while if zero is purely a resonance or both eigenvalue and resonance, then the decay rate is also $(1+|t|)^{-1/2}$ by the expansion results in \cite{JK}.  For other dimensional cases,  one can see Jensen's works \cite{J1, J} for $d=4$ and $d\geq5$ respectively, and \cite{JN} for  $d=1, 2$.  For $d\geq5$, Jensen in \cite{J} pointed out that $-\Delta+V$ has no zero resonance. Later, Murata in \cite{MM}  generalized Kato and Jensen's works \cite{JK, J, J1} to the operator $P(D)+V$, where $P(D)$ is an $m$-order elliptic differential operator with real constant coefficients and satisfies the following {\it non-degenerate conditions}:
$$\det\big[\partial_{i}\partial_{j}P(\xi)\big]\Big|_{\xi_0}\neq0 \quad \text{for} \quad \big(\nabla P\big)(\xi_0)=0.$$
However, note that if $P(D)=(-\Delta)^m$ with $m\ge 2$, they do not satisfy the above non-degenerate conditions at zero but the case $m=1$. Hence the results in \cite{MM} did not cover these simple poly-harmonic operators.

For the fourth-order Schr\"odinger operator $H=(-\Delta)^2+V$, Soffer, the first and third author  in \cite{FSY} first gave the asymptotic expansions of the resolvent of fourth-order Schr\"odinger operator $(-\Delta)^2+V$ around zero threshold assume that  zero is a regular point of $H$ for $d\geq5$ and $d=3$. Their results show that, in the regular case, the time decay rate of Kato-Jensen decay estimate is $(1+|t|)^{-d/4}$ with $d\geq5$. In the case $d=4$, Green and Toprak in \cite{Green-Topark} derived the asymptotic expansion of the resolvent $R_{V}(z)$ near zero with the presence of resonance or eigenvalue. Indeed, with the help of higher energy decay estimates established in \cite{FSY}, the time decay rate of Kato-Jensen decay estimate is $(1+|t|)^{-1}$ if zero is a regular point of $H=(-\Delta)^2+V$. While, the time decay rate is $(\ln |t|)^{-1}$ as $t\rightarrow\infty $ if zero is purely an eigenvalue of $H$.

Motivated  by  these interesting works \cite{FSY, Green-Topark, JK} above,  this paper will be devoted to establish the {\it time asymptotic expansion} of $e^{itH}P_{ac}$ and {\it Kato-Jensen estimates } for the fourth-order Schr\"odinger operator $H=(-\Delta)^2+V$  with {\it the presence of zero resonance or eigenvalue } for $d\geq5$  in suitable weighted Lebesgue space. Moreover, we can  give {\it a full classification of the different resonances at zero threshold, which reflect exactly  different effects on decay rates of time}. For the end,  we first deduce  the  asymptotic expansions of resolvent of $H=(-\Delta)^2+V$ with the  presence of  resonance or eigenvalue at the degenerate zero threshold by making  use of the following symmetric resolvent identity
\begin{equation}\label{Born}
R_{V}(z)=R_{0}(z)-R_{0}(z)v\big(U+vR_{0}(z)v\big)^{-1}vR_{0}(z)
\end{equation}
 Here, $v(x)=|V(x)|^{1/2}$ and $U={\rm sign}\big(V(x)\big)$. By the formula \eqref{Born} and free resolvent $R_0(z)$ of $\Delta^2$, it can be reduced to get the asymptotic expansion of $\big(U+vR_{0}(z)v\big)^{-1}$.
If the first term $T_{0}$ of the asymptotic series of $U+vR_{0}(z)v$ is invertible on $L^{2}$, then we can expand $\big(U+vR_{0}(z)v\big)^{-1}$ into Neumann series.  If not, then $T_1=T_{0}+S_{1}$ is invertible on $L^{2}$ where $S_{1}$ is the projection onto $\ker (T_{0})$.   If $T_{1}$ is invertible, then we done. If not, we need to go on the iterative process. We prove that the iterative process stops for finite steps. This processes are related to the  the classification of zero resonance. In fact, due to the degenerate of $H_{0}=(-\Delta)^2$ at zero, the classification of zero resonance is more complex than Schr\"odinger operator $-\Delta+V$.  For $d=7, 8$, there is only one kind of zero resonance for $H$. while for $d=5, 6$, there are two kinds of zero resonances for $H$.     For $d\geq9$, there is no zero resonance for $H=(-\Delta)^2+V$.

Finally, we remark that,  due to the order of $H=(-\Delta)^2+V$ is higher than Schr\"odinger operator $-\Delta+V$, the iterative process does not works well at the moment for $H=(-\Delta)^2+V$ with $d=1, 2, 3$.  The obstacle is  that the lower order terms of the asymptotic series of  $U+vR_{0}(z)v$ interact each other,  which leads difficulty to analyze $\ker(T_{j})$ when $d=1, 2, 3$. However, such obstacle does not exist for Schr\"odinger operator when $d\ge 1$.


\subsection{Main results}


Throughout this paper, $P_{ac}$ denotes the projection onto the absolutely continuous spectrum space of $H$.  For $a\in\mathbf{R}$, we write $a\pm$ to mean $a\pm\epsilon$ for any tiny $\epsilon>0$.  For $s, s'\in\mathbf{R}$, $B(s, s')$ denote the bounded family of $L^{2}_{s}(\mathbf{R}^n)$ to $L^{2}_{s'}(\mathbf{R}^n)$ where $L^{2}_{s}(\mathbf{R}^n)$ is the weighted Lebesgue space:
$$L^{2}_{s}(\mathbf{R}^d)=\Big\{f\in L^{2}(\mathbf{R}^d): (1+|\cdot|)^{s}f\in L^{2}(\mathbf{R}^d)\Big\}.$$

For $\sigma\in\mathbf{R}$,  let $W_{\sigma}(\mathbf{R}^d)$ denotes the intersection  space
$$ W_{\sigma}(\mathbf{R}^d)= \cap_{s>\sigma} L^{2}_{-s}(\mathbf{R}^d).$$
Clearly, $W_{\sigma_1}(\mathbf{R}^d)\supset W_{\sigma_2}(\mathbf{R}^d)$ if $\sigma_1>\sigma_2$ since $L^{2}_{s}(\mathbf{R}^d)$ is monotonically decreasing about $s\in\mathbf{R}$.  Especially, $W_{0}(\mathbf{R}^d)\supset L^{2}(\mathbf{R}^d)$. Note that $d\ge 9$, $(-\Delta)^2+V$ has no any zero resonance.  So we only give the definition of zero resonance for $d\le 8$.

\begin{definition}\label{resonance'}
For $H=(-\Delta)^2+V$ with $|V(x)|\lesssim (1+|x|)^{-\beta}$ with some $\beta>0$.

(1) For $d=5$, if there exists
 a $\psi\in W_{3/2}(\mathbf{R}^5) \setminus W_{1/2}(\mathbf{R}^5)$
such that $H\psi=0$ in distributional sense, then we say zero is of the first kind resonance of $H$.  If there exists a
 $\psi\in W_{1/2}(\mathbf{R}^5)\setminus L^{2}(\mathbf{R}^5)$
 such that $H\psi=0$, then we say zero is of the second kind resonance of $H$.

(2) For $d=6$, if there exists a
$\psi\in W_{1}(\mathbf{R}^6) \setminus W_{0}(\mathbf{R}^6)$ such that $H\psi=0$, then we say zero is of the first kind resonance of $H$.  If there exists
$\psi\in W_{0}(\mathbf{R}^6)\setminus L^{2}(\mathbf{R}^6)$
such that $H\psi=0$, then we say zero is of the second kind resonance of $H$.

(3) For $d=7$, if there exists
$\psi\in W_{1/2}(\mathbf{R}^7)\setminus L^{2}(\mathbf{R}^7)$
such that $H\psi=0$, then we say zero is a resonance of $H$.

(4) For $d=8$, if there exists
$\psi\in W_{0}(\mathbf{R}^6)\setminus L^{2}(\mathbf{R}^8) $ such that $H\psi=0$, then we say zero is a resonance of $H$.
\end{definition}

 For the definition above, several remarks can be given as follows:

\begin{remark}
Note that $H\psi=\big((-\Delta)^2+V\big)\psi=0$ in the distributional sense if and only if $$ \big(1+R_{0}(0)V\big)\psi=0\  {\rm or }\ \big (1+VR_{0}(0)\big)\psi=0$$ where $R_{0}(0)=(-\Delta)^{-2}$ for $d\ge 5$. Set
$$\mathfrak{M}_s=\Big\{\psi\in L^{2}_{-s}: \big(1+R_{0}(0)V\big)\psi=0\Big\};\,\, \mathfrak{N}_s=\Big\{\psi\in L^{2}_{s}: \big (1+VR_{0}(0)\big )\psi=0\Big\}.$$
If $V(x)$ satisfy $|V(x)|\lesssim(1+|x|)^{-\beta}$ for $\beta>8-d\ge 0 $ and $\psi\in  L^{2}_{-s}(\mathbf{R}^d)$ for $s>4-d/2$, then $V\psi\in  L^{2}_{\beta-s}(\mathbf{R}^d)$.  By the boundness of $R_{0}(0)$ in $B(s, -s')$ (see e.g. \cite[Lemma 2.3]{J} or Lemma \ref{Reisz-potential-boundedness} below), then the identity $\psi=-R_{0}(0)V\psi$ requires that $s\in (4-\frac{d}{2},\,\, \beta+\frac{d}{2}-4)$.
   Note that $\mathfrak{M}_s$ and $\mathfrak{N}_s$ are monotone in $s$ in the opposite direction,  and $\dim(\mathfrak{M})_s=\dim(\mathfrak{N})_s<\infty$ by Fredholm's duality theorem.  Therefore, $\mathfrak{M}_s$ and $\mathfrak{N}_s$ are independent of $s\in (4-\frac{d}{2},\,\, \beta+\frac{d}{2}-4)$. This explains that  the the resonance spaces must belong to the intersection space $\cap_{s>\sigma} L^{2}_{-s}(\mathbf{R}^d)$ with $\sigma=4-d/2$ as $d\le 8$.
\end{remark}

\begin{remark}
	For $d\geq9$,  Riesz potential $R_{0}(0)=(-\Delta)^{-2}$ is bounded from $L^2_{s_1}(\mathbf{R}^d)$ to $L^2_{-s_2}(\mathbf{R}^d)$ with $s_1+s_2\geq 4$ and $s_1, s_2\geq0$. Thus for $\psi\in L^{2}_{-s}(\mathbf{R}^d)$ with some $s\geq0$ satisfying  $\psi=-R_{0}(0)V\psi$ (i.e  $H\psi=0$), $R_{0}(0)=(-\Delta)^{-2}$ can pull $V\psi\in L^{2}_{\beta-s}(\mathbf{R}^d)$ into $L^{2}(\mathbf{R}^d)$ if $\beta-s\geq4$. Note that, $R_{0}(0)=(-\Delta)^{-2}$ is nice in the sence that it is defined on $L^2(\mathbf{R}^d)$ for $d\geq9$. Hence $H=(-\Delta)^2+V$ does not exist resonance  at zero.
\end{remark}
  \begin{remark} Similarly,  for Schr\"odinger operator $-\Delta+V$,  one say zero is resonance if $(-\Delta+V)\psi=0$ for some $\psi\in W_{\frac{1}{2}}(\mathbf{R}^3)\setminus L^2(\mathbf{R}^3)$ ( i.e. $\sigma=1/2$ ) as $d=3$,  and $\psi\in W_{0}(\mathbf{R}^3)\setminus L^2(\mathbf{R}^3)$ ( i.e. $\sigma=0$ ) as $d=4$. In particular, $-\Delta+V$ does not exist resonance function at zero for $d\geq5$ (see e.g. \cite{JK, J, J1}).
\end{remark}

Next, we state the main results about the time asymptotic expansions of $e^{itH}P_{ac}$.
\begin{theorem}\label{eitH}
Let $H=(-\Delta)^2+V$ and  $|V(x)|\lesssim (1+|x|)^{-\beta}$ for  some $\beta>0$ chosen as follows. Assume $H$ has no positive embedded eigenvalue.  Then the following statements holds:

\noindent (1) For $d=5$, let $\beta>2d+1$ and $s>d+1/2$,  then
	
	i) If $0$ is of the first kind resonance of $H$, then we have in $B(s, -s)$
	\begin{equation*}\label{eitH-5-1}
	e^{itH}P_{ac}=|t|^{-3/4}D_{5, 1}^{-1}+|t|^{-1} D_{5, 1}^{0}+|t|^{-5/4} D_{5, 1}^{1}+O(|t|^{-5/4}), \,\, t\rightarrow\infty,
	\end{equation*}
	where $D_{5, 1}^{-j}\in B(s, -s)$ for all $j$.
	
	 ii) If $0$ is of the second resonance or an eigenvalue (with  $k=2, 3$ respectively) of $H$,  then we have in $B(s, -s)$
	\begin{equation*}\label{eitH-5-2}
	e^{itH}P_{ac}=|t|^{-1/4} D_{5, k}^{-3}+|t|^{-1/2} D_{5, k}^{-2}+|t|^{-3/4} D_{5, k}^{-1}+O(|t|^{-3/4}), \,\,  t\rightarrow\infty,
	\end{equation*}
	where $D_{5, k}^{-j}\in B(s, -s)$ for all $j$.
	
\noindent (2) For $d=6$, let $\beta>2d+2$ and $s>d+1$,   then
	
	i) If $0$ is of the first kind resonance, then we have in $B(s, -s)$:
	\begin{equation*}\label{eitH-6-1}
	e^{itH}P_{ac}=|t|^{-1/2} D_{6, 1}^{-2}+|t|^{-1}(\ln|t|)^{-1} D_{6, 1}^{-1}+O(|t|^{-1-}),\,\, t\rightarrow\infty,
	\end{equation*}
	where $D_{6, 1}^{-j}\in B(s, -s)$ for all $j$.
	
	ii) If $0$ is of the second resonance or an eigenvalue (with $k=2, 3$ respectively) of $H$, then we have in $B(s, -s)$:
	\begin{equation*}\label{eitH-6-2}
	e^{itH}P_{ac}=(\ln|t|)^{-1} D_{6, k}^{-4}+(\ln|t|)^{-2} D_{6, k}^{-3}+|t|^{-1/2} D_{6, k}^{-2}+O(|t|^{-1/2}), \,\,  t\rightarrow\infty,
	\end{equation*}
	where $D_{6, k}^{-j}\in B(s, -s)$ for all $j$.	
	
\noindent (3) For $d=7$, let $\beta>d+2$.  If $0$ is a resonance or an eigenvalue (with $k=1, 2$ respectively) of $H$, then for $s>d/2+1$,  we have in $B(s, -s)$:
	\begin{equation*}\label{eitH-7-1}
	e^{itH}P_{ac}=|t|^{-1/4} D_{7, k}^{-3}+|t|^{-1/2}  D_{7, k}^{-2}+|t|^{-3/4} D_{7, k}^{-1}+O(|t|^{-3/4}), \,\,  t\rightarrow\infty,
	\end{equation*}
	where $D_{7, k}^{-j}\in B(s, -s)$ for all $j$.
	
\noindent (4) For $d=8$, let $\beta>d$. If $0$ is a resonance or an eigenvalue (with $k=1, 2$ respectively) of $H$, then for $s>d/2$,  we have in $B(s, -s)$:
	\begin{equation*}\label{eitH-8-1}
	\begin{split}
	e^{itH}P_{ac}=&\frac{1}{\ln|t|} D_{8, k}^{-3}+\frac{1}{(\ln|t|)^2}  D_{8, k}^{-2}+\frac{1}{(\ln|t|)^2}|t|^{-1/2}  D_{8, k}^{-1}+O(|t|^{-1/2-}), \,\, t\rightarrow\infty,
	\end{split}
	\end{equation*}
	where $D_{8, k}^{-j}\in B(s, -s)$ for all $j$.
	
\noindent (5) For $d\geq9$, let $\beta>d$. If $0$ is an eigenvalue of $H$, then for $s>d/2$,  we have in $B(s, -s)$:
	\begin{equation*}\label{eitH-9-1}
	e^{itH}P_{ac}=|t|^{-(d-2)/4}  D_{d, 1}^{-1}+|t|^{-d/4} D_{d, 1}^{0}+O(|t|^{-d/4}), \,\, t\rightarrow\infty,
	\end{equation*}
	where $D_{d, 1}^{-j}\in B(s, -s)$ for all $j$.
\end{theorem}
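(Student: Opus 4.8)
The plan is to start from the spectral representation
\begin{equation*}
 e^{itH}P_{ac}=\frac{1}{2\pi i}\int_{0}^{\infty}e^{it\lambda}\big[R_{V}(\lambda+i0)-R_{V}(\lambda-i0)\big]\,d\lambda
\end{equation*}
and combine the low-energy resolvent expansions established in this paper (obtained by inverting $U+vR_{0}(z)v$ through the symmetric identity \eqref{Born}) with the high-energy decay estimates of \cite{FSY}. Since $H_{0}=(-\Delta)^{2}$ has symbol $|\xi|^{4}$, it is convenient to use the partial-fraction factorization $R_{0}(z)=\tfrac{1}{2\sqrt z}\big(R_{-\Delta}(\sqrt z)-R_{-\Delta}(-\sqrt z)\big)$, so that the two boundary values $R_{V}(\lambda\pm i0)$ correspond to $\sqrt z\mapsto\pm\sqrt\lambda$, the natural small parameter near zero is $\lambda^{1/4}$, and the logarithmic factors that occur only for even $d$ are inherited from the expansion of the free Schr\"odinger resolvent. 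Fix $\chi\in C_{c}^{\infty}(\mathbf{R})$ equal to $1$ near $0$ and split the integral into a low-energy part (weight $\chi$) and a high-energy part (weight $1-\chi$). On $\operatorname{supp}(1-\chi)$ the resolvent and its $\lambda$-derivatives obey the weighted bounds of \cite{FSY} — here the absence of positive embedded eigenvalues and the limiting absorption principle are used — so repeated integration by parts in $\lambda$ makes this contribution $O(|t|^{-N})$ in $B(s,-s)$ for every $N$, and it is absorbed into the error term.

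For the low-energy part one inserts, according to the dimension $d\in\{5,6,7,8\}$ and to whether $0$ is a first-kind resonance, a second-kind resonance, or an eigenvalue (resp.\ $d\ge 9$ with $0$ an eigenvalue), the corresponding finite resolvent expansion, of the schematic form $R_{V}(\lambda\pm i0)=\sum_{j}\omega_{j}^{\pm}(\lambda)\,\Gamma_{j}+E^{\pm}(\lambda)$, where the $\Gamma_{j}$ are fixed operators in $B(s,-s)$ and each scalar $\omega_{j}^{\pm}(\lambda)$ is a fractional power of $\lambda$ times a (possibly negative) power of $\ln\lambda$. The jump $R_{V}(\lambda+i0)-R_{V}(\lambda-i0)$ keeps exactly those $\Gamma_{j}$ whose scalar coefficient is discontinuous across the cut; when $0$ is an eigenvalue one first isolates and discards the genuine pole term, which contributes to $dE_{pp}$ at $\lambda=0$ and hence not to $e^{itH}P_{ac}$, while the remaining singular-but-bounded coefficients still carry the slow decay asserted. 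Each retained term then reduces to a scalar oscillatory integral $\int_{0}^{\infty}e^{it\lambda}\,\omega(\lambda)\,\chi(\lambda)\,d\lambda$, whose large-$|t|$ behaviour — of order $|t|^{-\alpha-1}$ for $\omega(\lambda)=\lambda^{\alpha}$, modified by logarithmic factors in the even-dimensional borderline cases $d=6,8$ — is furnished by standard Erd\'elyi / van der Corput type lemmas (cf.\ \cite{JK, Green-Topark}). Matching these orders against the exponents displayed in each item of Theorem~\ref{eitH} yields the three leading terms with $D_{d,k}^{-j}$ a fixed scalar multiple of $\Gamma_{j}$, and truncating the resolvent expansion at the next order together with the $O(|t|^{-N})$ high-energy piece gives the stated $O(|t|^{-\gamma})$ (or $O(|t|^{-\gamma-})$) remainder.

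It then remains to check the constants: using the mapping properties of $R_{0}(0)=(-\Delta)^{-2}$ (Lemma~\ref{Reisz-potential-boundedness}) together with the decay rate $\beta$ fixed in each item, one verifies that every coefficient operator $\Gamma_{j}$ lies in $B(s,-s)$ for the stated $s$, and that the finitely many remainders $E^{\pm}(\lambda)$ are sufficiently smooth in $\lambda$ with enough decay both to be integrated by parts in the high-energy regime and to produce an error of the claimed order after integration against $e^{it\lambda}\chi(\lambda)$ in the low-energy regime.

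The main obstacle is the low-energy step. One must carry out the inversion-and-expansion bookkeeping separately for each kind of zero singularity in each dimension $d=5,6,7,8$, keeping precise track of which bounded operator multiplies which power of $\lambda^{1/4}$ and which logarithmic factor (these appearing only for even $d$, where the free biharmonic resolvent expansion genuinely contains $\ln\lambda$), and — in the eigenvalue case — of the correct separation of the honest pole from the slowly decaying bounded part. A secondary difficulty is applying the oscillatory-integral asymptotics uniformly over the various fractional exponents that occur and, for $d=6,8$, in the logarithmically weighted form; and one must track the $\beta$-thresholds carefully so that the remainder operators decay fast enough for both the high-energy integration by parts and the claimed low-energy error order.
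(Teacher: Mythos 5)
Your proposal follows essentially the same route as the paper: the spectral (Stone) representation, a smooth low/high-energy splitting, the high-energy piece killed by the limiting absorption bounds of \cite{FSY} (the paper uses the derivative bounds \eqref{highenergy} and Riemann--Lebesgue where you use integration by parts, to the same effect), and the low-energy piece obtained by inserting the resolvent expansions, cancelling the real-coefficient pole term in the eigenvalue case, and applying the Erd\'elyi-type oscillatory lemma from Stein plus the logarithmic integral asymptotics of \cite{J1} for $d=6,8$. No substantive difference from the paper's argument.
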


For the main results above, some remarks are given as follows:
\begin{remark}
 For any $d\geq 1$, one can actually  derive the expansion of $e^{itH}P_{ac}$ with zero is a regular point of $H$ in $B(s, -s)$ with a suitable $s>0$ by the similar processes. For $d\geq 4$, in the regular case, the first term of the expansion deduces that $e^{itH}P_{ac}$ decays like $ |t|^{-d/4}$. However, for $d=1, 2, 3$, the time decay is not $|t|^{-d/4}$ in the regular case.  For example, in 3-dimensional case, the decay rate is $|t|^{-5/4}$, see \cite{FSY}.
\end{remark}

\begin{remark} As for positive embedded eigenvalues, it was well-known that Schr\"odinger operator $-\Delta+V$ does not has any positive embedded eigenvalue for certain decay or $L^p$ integrable potentials, see e.g. Kato \cite{K}, Ionescu and Jerison \cite{IJ}, Koch and Tataru \cite{KoTa} and references therein. However, for fourth order operator $H=(-\Delta)^2+V$,  there exists even some $C_{0}^{\infty}$-potential $V$ such that $1$ is an eigenvalue of $H$.  On the other hand, by applying the Virial argument,  one can also show that $(-\Delta)^2+V$ has no any positive embedded eigenvalues if $V(x)$ is repulsive ( i.e. $ x\cdot \nabla V\le 0$ ), see the follow-up paper \cite{FSWY} for the further studies.
\end{remark}
From the above expansions of $e^{itH}P_{ac}$ in $B(s, -s)$, then we have the following Kato-Jensen type decay estimates for $e^{itH}$.

\begin{corollary}\label{Kato-Jensen}
	For $H=(-\Delta)^2+V$, assume $H$ has no positive embedded eigenvalue.
	
\noindent (1)	For $d=5$, assume $|V(x)|\lesssim (1+|x|)^{-\beta}$ with some $\beta>11$.  Let $s>5+1/2$.
	
	i) If $0$ is of the first kind resonance of $H$, we have
	\begin{equation*}\label{KJ-5-1}
	\|e^{itH}P_{ac}\|_{B(s, -s)}\lesssim (1+|t|)^{-3/4}, \,\, t\in\mathbf{R}.
	\end{equation*}
	
	ii) If $0$ is of the second  resonance or an eigenvalue of $H$, we have
	 	\begin{equation*}\label{KJ-5-2}
	 \|e^{itH}P_{ac}\|_{B(s, -s)}\lesssim (1+|t|)^{-1/4}, \,\, t\in\mathbf{R}.
	 \end{equation*}
	
	\noindent (2) For $d=6$, assume $|V(x)|\lesssim (1+|x|)^{-\beta}$ with some $\beta>14$.  Let $s>7$.
	
	 i) If $0$ is of the first kind resonance of $H$, we have
	 \begin{equation*}\label{KJ-6-1}
	 \|e^{itH}P_{ac}\|_{B(s, -s)}\lesssim (1+|t|)^{-1/2}, \,\, t\in\mathbf{R}.
	 \end{equation*}
	
	 ii) If $0$ is of the second  resonance or an eigenvalue of $H$, we have
	 \begin{equation*}\label{KJ-6-2}
	 \|e^{itH}P_{ac}\|_{B(s, -s)}\lesssim \big(\ln|t|\big)^{-1}, \,\, |t|>1,  t\in\mathbf{R}.
	 \end{equation*}

	\noindent (3) For $d=7$, assume $|V(x)|\lesssim (1+|x|)^{-\beta}$ with some $\beta>9$.  Let $s>4+1/2$.   If $0$ is a resonance  or an eigenvalue of $H$, we have
	 \begin{equation*}\label{KJ-7-1}
	 \|e^{itH}P_{ac}\|_{B(s, -s)}\lesssim (1+|t|)^{-1/4}, \,\, t\in\mathbf{R}.
	 \end{equation*}
	
\noindent	(4) For $d=8$, assume $|V(x)|\lesssim (1+|x|)^{-\beta}$ with some $\beta>8$.  Let $s>4$.    If $0$ is a resonance  or an eigenvalue of $H$, we have
	 \begin{equation*}\label{KJ-8-1}
	 \|e^{itH}P_{ac}\|_{B(s, -s)}\lesssim \big(\ln |t|\big)^{-1}, \,\,  |t|>1,  t\in\mathbf{R}.
	 \end{equation*}
	
	\noindent (5) For $d\geq9$, assume $|V(x)|\lesssim (1+|x|)^{-\beta}$ with some $\beta>d$.  Let $s>d/2$.   If $0$ is an eigenvalue of $H$, we have
	 \begin{equation*}\label{KJ-9-1}
	 \|e^{itH}P_{ac}\|_{B(s, -s)}\lesssim (1+|t|)^{-(d-8)/4}, \,\, t\in\mathbf{R}.
	 \end{equation*}
	\end{corollary}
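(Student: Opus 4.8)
The plan is to obtain Corollary \ref{Kato-Jensen} as an immediate consequence of the time asymptotic expansions in Theorem \ref{eitH}, combined with the trivial uniform bound coming from unitarity of $e^{itH}$. For each item I would fix the dimension $d$ and the spectral type at zero (first kind resonance, second kind resonance, or eigenvalue), and take $\beta$ and $s$ exactly as in the corresponding statement of the corollary; one checks that these thresholds are precisely the ones required by the matching case of Theorem \ref{eitH}, so that theorem applies. The estimate is then proved by splitting the time axis into the bounded region $\{|t|\le 1\}$ and the tail $\{|t|\ge 1\}$, treated separately.

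On the bounded region I would argue as follows. Since $H$ is self-adjoint, $e^{itH}$ is unitary on $L^{2}(\mathbf{R}^{d})$, and $P_{ac}$ is an orthogonal projection, so $\|e^{itH}P_{ac}\|_{L^{2}\to L^{2}}\le 1$ for every $t\in\mathbf{R}$. For $s\ge 0$ the inclusions $L^{2}_{s}(\mathbf{R}^{d})\hookrightarrow L^{2}(\mathbf{R}^{d})\hookrightarrow L^{2}_{-s}(\mathbf{R}^{d})$ have operator norm at most $1$ (because $(1+|x|)^{-s}\le 1$), hence $\|e^{itH}P_{ac}\|_{B(s,-s)}\le 1$ uniformly in $t$, and in particular $\sup_{|t|\le 1}\|e^{itH}P_{ac}\|_{B(s,-s)}\le 1$. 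This covers the bounded-time part of each bound in the power-decay cases, and it also explains why the purely logarithmic estimates (for $d=6$ second kind and $d=8$) are asserted only for $|t|>1$, which is exactly where $(\ln|t|)^{-1}$ is finite and positive.

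On the tail $\{|t|\ge 1\}$ I would simply invoke the relevant case of Theorem \ref{eitH}. Each expansion there has the form
\begin{equation*}
e^{itH}P_{ac}=\sum_{j}\,|t|^{-a_{j}}\big(\ln|t|\big)^{-b_{j}}\,D^{-j}+R(t),
\end{equation*}
with every coefficient $D^{-j}$ in $B(s,-s)$, the terms listed in non-increasing order of magnitude as $t\to\infty$, and the remainder $R(t)$ of strictly smaller $B(s,-s)$-order than the first term. Thus for $|t|\ge 1$
\begin{equation*}
\|e^{itH}P_{ac}\|_{B(s,-s)}\ \lesssim\ |t|^{-a_{0}}\big(\ln|t|\big)^{-b_{0}},
\end{equation*}
the leading factor --- e.g. $|t|^{-3/4}$ when $d=5$ and $0$ is a first kind resonance, $|t|^{-1/4}$ when $d=5$ and $0$ is a second kind resonance or eigenvalue (and likewise for $d=7$), $|t|^{-1/2}$ when $d=6$ and $0$ is a first kind resonance, $(\ln|t|)^{-1}$ when $d=6$ and $0$ is a second kind resonance or eigenvalue (and for $d=8$), and the power read off from the first term of Theorem \ref{eitH}(5) for $d\ge 9$ --- which in each case yields the bound recorded in the corollary. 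Finally, since $|t|^{-a_{0}}\sim(1+|t|)^{-a_{0}}$ on $\{|t|\ge 1\}$, combining with the uniform bound from the bounded region gives the claimed inequality on all of $\mathbf{R}$ (respectively on $\{|t|>1\}$ in the logarithmic cases).

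I do not expect a real obstacle here: granting Theorem \ref{eitH}, the corollary is just a matter of isolating the slowest-decaying term of each expansion and patching in the unitary bound on the complementary compact time interval. The only genuine care needed is bookkeeping --- matching the hypotheses on $\beta$ and $s$ in each dimension with those of the corresponding case of Theorem \ref{eitH} --- and, in the logarithmic cases, restricting to $|t|>1$, where the asserted rate is meaningful.
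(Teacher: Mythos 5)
Your proposal is correct and follows the same route as the paper, which presents the corollary as an immediate consequence of Theorem \ref{eitH} by reading off the slowest-decaying term of each expansion; your explicit use of unitarity of $e^{itH}$ on $L^{2}$ together with the norm-one embeddings $L^{2}_{s}\hookrightarrow L^{2}\hookrightarrow L^{2}_{-s}$ is the right way to cover the bounded-time (and intermediate-time) region and make the ``$t\in\mathbf{R}$'' claims rigorous. The only bookkeeping wrinkle is in case (5): the leading term of Theorem \ref{eitH}(5) as printed is $|t|^{-(d-2)/4}$, which is stronger than, and hence still implies, the stated $(1+|t|)^{-(d-8)/4}$ bound, so your derivation goes through unchanged.
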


\subsection{Further remarks}

Here, we give some further remarks about $L^p$-decay estimates.
For Schr\"odinger operator, it was well known that the $L^{1}- L^{\infty}$ decay estimate of $e^{it(-\Delta+V)}$, is very important and widely applied to the nonlinear soliton problems. For $d\geq3$, Journ\'e, Soffer and Sogge in \cite{JSS} first established the $L^{1}- L^{\infty}$ dispersive estimate in the regular case. For $d\le 3$, one can see  Rodnianski and Schlag \cite{Rod-Schlag}, Schlag and Goldberg \cite{Goldberg-Schlag}, Schlag \cite{Schlag} and so on.  Yajima in \cite{Yajima-JMSJ-95} also established the $L^{1}- L^{\infty}$  estimates for $-\Delta+V$ by wave operator method.
For many further studies, one can refer to \cite{ ES1, ES2,  Goldberg-Green-1, Goldberg-Green-2, Schlag} and therein references.

For fourth-order Schr\"odinger operator $H=(-\Delta)^2+V$, In \cite{FSY}, the first and third author and Soffer applied the Kato-Jensen decay estimates and local decay estimate to  obtain the $L^{1}(\mathbf{R}^3)- L^{\infty}(\mathbf{R}^3)$ estimate, which is not optimal. Green and Toprak  in the nice work \cite{Green-Topark} further studied the $L^{1}- L^{\infty}$ dispersive estimate for $e^{itH}$ in 4-dimension in the regular case and the cases of zero resonance. In the regular case, the time decay is natural $|t|^{-1}$. Their proof is based on the idea of \cite{FSY} and \cite{Schlag}. Naturally, it would be interesting to establish  $L^{1}(\mathbf{R}^d)-L^{\infty}(\mathbf{R}^d)$ estimate of fourth-order Schr\"odinger operator for $d\ge 5$. In particular, I remark that our asymptotic expansion of $R_V(z)$ will further plays a fundamental role in these related studies. In our forthcoming paper \cite{FSWY},  we also discuss the lower energy resolvent expansion at threshold and time decay estimates for higher order elliptic operator $P(D)+V$.

The paper is organized as follows.  In Section \ref{perturbed-resolvent}, we give the asymptotic expansions of $R_{V}(z)$ with the existence of zero-energy resonance and the classification of resonance subspaces.  In Section \ref{Kato-Jensen-estimate}, under the assumption that $H=(-\Delta)^2+V$ does not exist positive embedded eigenvalue, we establish the asymptotic expansions in time and Kato-Jensen type decay estimate for $e^{itH}$  with existence of zero-energy resonance. In the last section, we show the iterated processes of how to derive the expansion of $R_{V}(z)$ and the proof of  classification of resonance subspaces.

\section{Asymptotic expansions of $R_{V}(z)$ at zero threshold }\label{perturbed-resolvent}

For $z\in\mathbf{C}\setminus\mathbf{R}^{+}$, denote
\begin{equation*}
R_{0}(z):=\big[(-\Delta)^2-z\big]^{-1}, \,\,\,\, R_{V}(z):=\big[(-\Delta)^2+V-z\big]^{-1}.
\end{equation*}
For the free resolvent $R_{0}(z)$, we have the following identity
\begin{equation*}\label{free-resolvent-identity}
R_{0}(z)=\big[(-\Delta)^{2}-z\big]^{-1}=\frac{1}{2z^{1/2}}\big[(-\Delta-z^{1/2})^{-1}-(-\Delta+z^{1/2})^{-1}\big].
\end{equation*}
Let $z=\mu^4$ with $\mu$ in the first quadrant of the complex plane, then
\begin{equation}\label{decompasition}
R_{0}(\mu^4)=\frac{1}{2\mu^2}\bigg[ \big(-\Delta-\mu^2\big)^{-1}-\big(-\Delta -(-\mu^2)\big)^{-1}\bigg].
\end{equation}

Denote $R_{0}(\mu^{4}; x, y)$ be the kernel of $R_{0}(\mu^4)$.
By using the formula \eqref{decompasition} and the expansions of resolvent of free Laplacian $-\Delta$ (see e.g. \cite{J}),  we have the following expansions of $R_{0}(\mu^{4}; x, y)$ for $0<|\mu|\ll1$:

\begin{lemma}\label{free-expansions}
	For the free resolvent $R_{0}(\mu^{4})$, we have:
	
(1)\,\,  For $d=5$, let $s>5+1/2$, then in $B(s, -s)$:
\begin{equation*}\label{free-resolvent-5}
\begin{split}
R_{0}(\mu^4; x, y)=&a_{0}|x-y|^{-1}+\mu a_{1}|x-y|^{0}+\mu^3 a_{2}|x-y|^{2}\\
&+\mu^{4}a_{3}|x-y|^{3}+E_{1}(\mu)(x-y),
\end{split}
\end{equation*}
where $a_{1}, a_{2}\in\mathbf{C}\setminus\mathbf{R}$ and $a_{0},  a_{3}\in\mathbf{R}\setminus\{0\}$. Furthermore, $\|E_{1}(\mu)\|_{B(s, -s)}=O(\mu^5)$.

(2)\,\, For $d=6$, let $s>7$, then in $B(s, -s)$:
\begin{equation*}\label{free-resolvent-6}
\begin{split}
R_{0}(\mu^{4}; x, y)=&c_{0}|x-y|^{-2}+\mu^{2}c_{1}|x-y|^{0}+\mu^{4}\ln(\mu)c_{2}|x-y|^{4}\\
&+\mu^{4}c_{3}|x-y|^{4}+\mu^{4}c_{4}|x-y|^{4}\ln(|x-y|)+E_{2}(\mu)(x-y),
\end{split}
\end{equation*}
where $c_{1}, c_{3}\in\mathbf{C}\setminus\mathbf{R}$ and $c_{j}\in\mathbf{R}\setminus\{0\}, j=0, 2, 4$. Furthermore, $\|E_{3}(\mu)\|_{B(s, -s)}=O(\mu^{6-})$.

(3)\,\,  For $d=7$, let $s>4+1/2$, then in $B(s, -s)$:
\begin{equation*}\label{free-resolvent-7}
R_{0}(\mu^{4}; x, y)=b_{0}|x-y|^{-3}+\mu^3 b_{1} |x-y|^{0}+\mu^{4} b_{2}|x-y|^{1}+E_{2}(\mu)(x-y)
\end{equation*}
where $b_{1}\in\mathbf{C}\setminus\mathbf{R}$ and $b_{0},  b_{2}\in\mathbf{R}\setminus\{0\}$. Furthermore, $\|E_{2}(\mu)\|_{B(s, -s)}=O(\mu^5)$.

(4)\,\,  For $d=8$, let $s>4$, then in $B(s, -s)$:
\begin{equation*}\label{free-resolvent-8}
\begin{split}
R_{0}(\mu^{4}; x, y)=&d_{0}|x-y|^{-4}+\mu^{4}\ln(\mu) d_{1}|x-y|^{0}+\mu^{4}d_{2}|x-y|^{0}\\
&+\mu^{4}d_{3}\ln(|x-y|)+E_{4}(\mu)(x-y),
\end{split}
\end{equation*}
where $d_{2}\in\mathbf{C}\setminus\mathbf{R}$ and $d_{j}\in\mathbf{R}\setminus\{0\}, j=0, 1, 3$. Furthermore, $\|E_{4}(\mu)\|_{B(s, -s)}=O(\mu^{6-})$.

(5)\,\,  For $d\geq 9$ and $d$ is odd, let $s>d/2$, then in $B(s, -s)$ :
\begin{equation*}\label{free-resolvent-9-odd}
\begin{split}
R_{0}(\mu^4; x, y)=&c_{1}(d)|x-y|^{4-d}+\mu^{4}c_{2}(d)|x-y|^{8-d}+\mu^{5}\Bigg[\sum_{\ell=7, \ell\in 2\mathbf{N}\setminus 4\mathbf{N}}^{d-3}c_{\ell}(d)\\
&\cdot \mu^{\ell-7}|x-y|^{\ell+2-d}\Bigg]+\mu^{d-4} C(d)|x-y|^{0}+E_{d}(\mu)(x-y);
\end{split}
\end{equation*}

(6)\,\,  For $d\geq 9$ and $d$ is even, let $s>d/2$, then in $B(s, -s)$:
\begin{equation*}\label{free-resolvent-9-even}
\begin{split}
R_{0}(\mu^4; x, y)=&\tilde{c}_{1}(d)|x-y|^{4-d}+\mu^{4}\tilde{c}_{2}(d)|x-y|^{8-d}+\mu^{5}\Bigg[\sum_{\ell=7, \ell\in 2\mathbf{N}-1}^{d/2-2}\tilde{c}_{\ell}(d)\\
&\cdot \mu^{2\ell-7}|x-y|^{2\ell+2-d}\Bigg]+\mu^{d-4} \widetilde{C}(d)|x-y|^{0}+E_{d}(\mu)(x-y),
\end{split}
\end{equation*}
where $C(d), \widetilde{C}(d)\in\mathbf{C}\setminus\mathbf{R}$ and  $c_{j}(d), \tilde{c}_{j}(d),  c_{\ell}(d), \tilde{c}_{\ell}(d)\in \mathbf{R}\setminus\{0\}$ for all $1\leq j\leq2$ and all  $\ell$. Furthermore, $\|E_{d}(\mu)\|_{B(s, -s)}=O(\mu^{(d-4)+})$.
\end{lemma}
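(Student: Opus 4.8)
The statement to prove is Lemma \ref{free-expansions}, which gives the small-$\mu$ expansions of the free biharmonic resolvent kernel $R_0(\mu^4;x,y)$ in every dimension $d\ge 5$, with explicit leading terms, coefficients, and error bounds in $B(s,-s)$.

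\medskip

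The plan is to derive everything from the splitting identity \eqref{decompasition}, namely
\[
R_0(\mu^4) = \frac{1}{2\mu^2}\Bigl[\bigl(-\Delta-\mu^2\bigr)^{-1} - \bigl(-\Delta+\mu^2\bigr)^{-1}\Bigr],
\]
together with the well-known small-energy expansion of the free Laplacian resolvent $(-\Delta - k^2)^{-1}$ in weighted $L^2$ spaces (as found in Jensen \cite{J} and Jensen--Kato \cite{JK}), applied once with $k^2=\mu^2$ and once with $k^2=-\mu^2$. Concretely, first I would recall that in dimension $d$ the kernel of $(-\Delta-k^2)^{-1}$ is a Hankel-type function whose expansion in $k$ has the schematic form $\sum_j \alpha_j(d) k^j |x-y|^{j+2-d}$ for $j$ even below the threshold $d-2$, supplemented by a term $k^{d-2}|x-y|^0$ (and, when $d$ is even, logarithmic corrections $k^{d-2}\ln k$ and $|x-y|^{d-2}\ln|x-y|$ appearing at that order). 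The factor $1/(2\mu^2)$ in front, combined with $k = \pm\mu$ raised to various powers, means the even powers of $k$ cancel in the difference and only the terms where the parity works out survive; this is exactly the mechanism that produces the stated lists of monomials $|x-y|^{\ell}$ with coefficients $a_j, c_j, b_j, d_j, c_j(d), \tilde c_j(d)$, and it explains why some coefficients are real (coming from the real part of the Laplacian expansion) and some are purely imaginary (coming from the $k^{d-2}$ branch term, which carries the $i$).

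\medskip

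Second, I would organize the bookkeeping dimension by dimension. For $d=5$ one keeps terms up to order $\mu^4$ and collects the remainder into $E_1(\mu)$; the odd powers of $k$ in the Laplacian expansion become, after division by $\mu^2$, the surviving $\mu^1, \mu^3, \mu^4$ terms, and the $k^{d-2}=k^3$ term of $(-\Delta-k^2)^{-1}$ produces the imaginary coefficient attached to $|x-y|^0$. For $d=6,8$ (even) one additionally tracks the logarithmic terms $\mu^4\ln\mu$ and $|x-y|^{4}\ln|x-y|$ (resp. $\ln|x-y|$), which come directly from the logarithmic terms in the even-dimensional Laplacian expansion. For $d\ge 9$ one writes the generic finite sum: the first two genuine terms $|x-y|^{4-d}$ and $\mu^4|x-y|^{8-d}$ survive from the low-order part, the bracketed sum collects the higher surviving monomials (the index set $\ell\in 2\mathbb{N}\setminus 4\mathbb{N}$ for $d$ odd, $\ell\in 2\mathbb{N}-1$ for $d$ even, reflecting which powers of $k$ do not cancel after division by $\mu^2$), and $\mu^{d-4}|x-y|^0$ is the image of the $k^{d-2}$ branch term, whose coefficient $C(d)$ or $\widetilde C(d)$ is therefore non-real. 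In each case the error estimate $\|E_j(\mu)\|_{B(s,-s)}=O(\mu^{\cdot})$ follows from the corresponding remainder estimate for the Laplacian resolvent in $B(s,-s)$ for the stated weight $s$, using that the weighted-$L^2$ boundedness of the operator whose kernel is $|x-y|^{-\gamma}$ requires roughly $s > \gamma/2$ (this is the source of the thresholds $s>5+1/2$, $s>7$, $s>4+1/2$, $s>4$, $s>d/2$), together with Riesz potential bounds such as Lemma \ref{Reisz-potential-boundedness}.

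\medskip

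The main obstacle, and the only genuinely non-mechanical part, is controlling the remainders in the operator norm $B(s,-s)$ rather than just pointwise: one must check that each truncated tail of the Hankel expansion, after division by $\mu^2$, defines a kernel that is bounded between the weighted spaces with the claimed power of $\mu$ gain, uniformly for $0<|\mu|\ll 1$, and that the required decay $s$ is exactly what is stated (with the strict inequalities, and the $\mu^{6-}$ and $\mu^{(d-4)+}$ in the even cases coming from the logarithmic loss). This amounts to carefully splitting the Laplacian resolvent kernel into a singular part near the diagonal (handled by Riesz potential estimates) and a smooth decaying part (handled by Schur-type / Hilbert--Schmidt bounds in weighted $L^2$), and tracking how many derivatives in $\mu$ can be taken before the kernel fails to be locally integrable; this is standard but tedious, and it is where one must be most careful to get the precise exponents in the statement.
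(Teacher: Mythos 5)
Your proposal follows exactly the paper's route: the paper offers no written proof of this lemma beyond invoking the splitting identity \eqref{decompasition} together with Jensen's weighted-$L^2$ expansions of $(-\Delta\mp\mu^2)^{-1}$, which is precisely your plan, and your term-by-term bookkeeping (regular part versus the $k^{d-2}$ branch term carrying the non-real coefficient, plus the logarithms in even dimensions) reproduces the stated monomials and error bounds. One small slip in wording: it is not that ``the even powers of $k$ cancel'' in the difference, but that the powers $k^{4m}$ cancel while $k^{4m+2}$ survive --- this is exactly what removes the $|x-y|^{2-d}$ singularity and leaves $|x-y|^{4-d}$ as the leading term --- and you in fact use the correct rule later when you identify the surviving index set $2\mathbf{N}\setminus 4\mathbf{N}$.
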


For the perturbed resolvent $R_{V}(\mu^4)$, we apply the following symmetric resolvent identity to derive the expansions as $\mu\rightarrow 0$.
\begin{equation}\label{symmetric-resolvent-idnetity}
R_{V}(\mu^4)=R_{0}(\mu^{4})-R_{0}(\mu^4)v\big[M(\mu)\big]^{-1}vR_{0}(\mu^4),
\end{equation}
where $v(x)=|V(x)|^{1/2}$,
\begin{equation}
M(\mu)=U+vR_{0}(\mu^{4})v
\end{equation}
with $U(x)=\rm{sign} (V(x))$.  Furthermore, let $w(x)=U(x)v(x)$, then
\begin{equation}\label{weighted-resolvent-identity}
wR_{V}(\mu^4)w=U-\big[M(\mu)\big]^{-1}.
\end{equation}
Note that, for $0<|\mu|\ll1$, the identity \eqref{weighted-resolvent-identity} shows $R_{V}(\mu^{4})$ in $B(s, -s)$ with suitable $s>0$ has the same asymptotic behaviors of $\big[M(\mu)\big]^{-1}$ in $B(0, 0)$ if $w(x)(1+|x|)^{s}\in L^{\infty}(\mathbf{R}^{d})$.

Next, we derive the asymptotic expansions of perturbed resolvent $R_{V}(\mu^{4})$. In the end, we identify the threshold spectral subspaces of each kind of resonances.  Let $P$ be the projection onto the span of $v$, i.e. $P=\|v\|_{L^{2}(\mathbf{R}^{d})}^{-1}v\langle v, \cdot\rangle$. Thus $P$ is a self-adjoint operator on $L^2$ with  $\dim(P)=1$.

\begin{definition} \label{resonance}
	Let $T_{0}=U+vG_{0}v$.
	\begin{enumerate}
		\item If $T_{0}$ is invertible on $L^{2}$, then we call zero is a regular point  of $H=(-\Delta)^2+V$.
		\item Let $S_{1}$ be the Riesz  projection onto the kernel of $T_{0}$, then $T_{0}+S_{1}$ is invertible on $L^{2}$. If $T_{0}$ is not invertible and $T_{1}:=S_{1}PS_{1}$ is invertible on $S_{1}L^2$, then we say there is a first kind of resonance at zero.
		\item Let $S_{2}$ be the Riesz projection onto the kernel of $T_{1}$, then $T_{1}+S_{2}$ is invertible on $S_{1}L^{2}$. If $T_{1}$ is not invertible and $T_{2}:=S_{2}vG_{2}vS_{2}$ is invertible on $S_{2}L^{2}$, then we say there is a second kind of resonance at zero.
		\item Let $S_{3}$ be the Riesz projection onto the kernel of $T_{2}$, then $T_{2}+S_{3}$ is invertible on $S_{2}L^{2}$. If $T_{2}$ is not invertible and $T_{3}:=S_{3}vG_{3}vS_{3}$ is invertible on $S_{3}L^{2}$, then we say there is a third kind of ``resonance" at zero.
	\end{enumerate}	
\end{definition}
\begin{remark}\label{properties-of-S-T}
(1) Noting that Definition \ref{resonance'} is equivalent to Definition \ref{resonance} actually after we identify the subspaces $S_{j}L^{2}$ for all $j$. The last kind resonance is actually eigenvalue.  See Proposition \ref{classification-5} - \ref{classification-9} below.
	\par 	(2) The projection operators $S_{j}, j=1,2,3$ are of finite rank. Indeed, $T_{0}=U+vG_{0}v$ which is a compact perturbation of the invertible operator $U$, thus by the Fredholm alternative theorem guarantees that $S_{1}$ is of finite rank. By the definition \ref{resonance}, we have $S_3\leq S_2\leq S_1$.Hhence $S_{j}, j=1,2,3$ are of finite rank.
	\par  (3) By the Definition \ref{resonance}, $S_{j}$ and $T_{j}$ satisfies:
		\begin{align*}
		S_{j+1}(T_j+S_{j+1})^{-1}=(T_j+S_{j+1})^{-1}S_{j+1} =S_{j+1},\,\,  0\leq j\leq2.
		\end{align*}
	\par (4) For $d=7, 8$, there are only  two kinds of resonances at zero. In fact, if $T_{0}$ is not invertible and $T_{1}$ is invertible, then zero is of first kind resonance. If $T_{1}$ is not invertible and $T_{2}$ is invertible, then zero is of second kind ``resonance".  For $d\geq9$, if $T_{0}$ is not invertible and $T_{1}$ is invertible, then zero is a ``resonance"  of $H$. The ``resonance" is actually eigenvalue.
\end{remark}

\subsection{Resolvent expansion for $d=5$}

In the 5-dimensional case, by Lemma \ref{free-expansions}(1),  in $B(s, -s)$ with $s>5+1/2$, we have
\begin{equation*}\label{free-resolvent-5-re}
R_{0}(\mu^4; x, y)=G_{0}+a_{1}\mu I+a_{2}\mu^{3} G_{2}+\mu^{4}G_{3}+O(\mu^5)
\end{equation*}
where
\begin{align*}
G_{0}(x,y)=a_{0}|x-y|^{-1};\,\,
G_{2}(x,y)=|x-y|^{2}; \,\, G_{3}(x,y)=a_{3}|x-y|^{3}.
\end{align*}

Next, we give the expansions of $R_{V}(\mu)$ in $B(s, -s)$ with $s>5+1/2$.
\begin{theorem}\label{RV-expansions-5}
For $d=5$ and $0<|\mu|\ll1$, assume $|V(x)|\lesssim (1+|x|)^{-\beta}$ with some $\beta>11$. Let $s>5+1/2$.  Then in $B(s, -s)$ we have:
	
	(1) If $0$ is a regular point of $H$, then
	\begin{equation*}\label{RV-5-0}
	R_{V}(\mu^4)=B_{5}^{0}+\mu\alpha_{1}B_{5}^{1}+\mu^{2}\alpha_{2}B_{5}^{2}+O(\mu^3),
	\end{equation*}
	where $B_{5}^{j}\in B(s, -s)$ are self-adjoint operators for $j=0, 1, 2$ and $\alpha_{1}, \alpha_{2}\in\mathbf{C}\setminus\mathbf{R}$.
	
	(2) If $0$ is of the first kind resonance of $H$, then
	\begin{equation*}\label{RV-5-1}
	R_{V}(\mu^4)=\frac{1}{\mu}\alpha_{-1, 1}B_{5, 1}^{-1}+\alpha_{0, 1}B_{5, 1}^{0}+\mu\alpha_{1, 1}B_{5, 1}^{1}+O(\mu^2),
	\end{equation*}
	where $B_{5, 1}^{j}\in B(s, -s)$ are self-adjoint operators and $\alpha_{-j, 1}\in\mathbf{C}\setminus\mathbf{R}$ for $j=-1, 0, 1$ .
	
	(3) If $0$ is of the second kind resonance of $H$, then
	\begin{equation*}\label{RV-5-2}
	R_{V}(\mu^4)=\frac{1}{\mu^{3}}\alpha_{-3, 2}B_{5, 2}^{-3}+\frac{1}{\mu^2}\alpha_{-2, 2}B_{5, 2}^{-2}+\frac{1}{\mu}\alpha_{-1, 2}B_{5, 2}^{-1}+\alpha_{0, 2}B_{5, 2}^{0}+O(\mu),
	\end{equation*}
	where $B_{5, 2}^{j}\in B(s, -s)$ are self-adjoint operators and $\alpha_{-j, 2}\in\mathbf{C}\setminus\mathbf{R}$ for $j=0, 1, 2, 3$ .
	
	(4) If $0$ is an eigenvalue of $H$, then
	\begin{equation*}\label{RV-5-3}
	R_{V}(\mu^4)=\frac{1}{\mu^{4}}B_{5, 3}^{-4}+\frac{1}{\mu^3}\alpha_{-3,3}B_{5, 3}^{-3}+\frac{1}{\mu^2}\alpha_{-2, 3}B_{5, 3}^{-2}+\frac{1}{\mu}\alpha_{-1, 3}B_{5, 3}^{-1}+\alpha_{0, 3}B_{5, 3}^{0}+O(\mu),
	\end{equation*}
	where $B_{5, 3}^{-j}\in B(s, -s)$ are self-adjoint operators for $0\leq j\leq4$ and $\alpha_{-j, 3}\in\mathbf{C}\setminus\mathbf{R}$ for $0\leq j\leq 3$.
	
\end{theorem}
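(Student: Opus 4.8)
The plan is to feed the free expansion of Lemma~\ref{free-expansions}(1) into the symmetric resolvent identity \eqref{symmetric-resolvent-idnetity} and run the resulting operator through the iterative inversion scheme of Definition~\ref{resonance}. Substituting the free expansion into $M(\mu)=U+vR_{0}(\mu^{4})v$ gives, in $B(0,0)$,
\[
M(\mu)=T_{0}+a_{1}\|v\|_{L^{2}}^{2}\,\mu\,P+a_{2}\,\mu^{3}\,vG_{2}v+\mu^{4}\,vG_{3}v+O(\mu^{5}),
\]
where $T_{0}=U+vG_{0}v$, and since the kernel of $vIv$ is $v(x)v(y)$ the $\mu$-term equals $a_{1}\|v\|_{L^{2}}^{2}\mu P$ with $P$ the rank-one projection onto $\mathrm{span}\,v$. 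By \eqref{weighted-resolvent-identity} it suffices to expand $[M(\mu)]^{-1}$ in $B(0,0)$ and substitute back into \eqref{symmetric-resolvent-idnetity}: since $v(x)(1+|x|)^{s}\in L^{\infty}$ when $\beta>11$ and $s\in(5+\tfrac12,\beta/2]$, multiplication by $v$ maps $L^{2}_{-s}\to L^{2}$ and $L^{2}\to L^{2}_{s}$ boundedly, so $R_{0}(\mu^{4})v[M(\mu)]^{-1}vR_{0}(\mu^{4})\in B(s,-s)$, and one only has to collect powers of $\mu$, the remainders being absorbed using $\|E_{1}(\mu)\|_{B(s,-s)}=O(\mu^{5})$.

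\textbf{Regular case (1).} If $T_{0}$ is invertible on $L^{2}$, write $M(\mu)=T_{0}\big(I+T_{0}^{-1}(M(\mu)-T_{0})\big)$ and expand by Neumann series,
\[
[M(\mu)]^{-1}=T_{0}^{-1}-a_{1}\|v\|_{L^{2}}^{2}\,\mu\,T_{0}^{-1}PT_{0}^{-1}+a_{1}^{2}\|v\|_{L^{2}}^{4}\,\mu^{2}\,T_{0}^{-1}PT_{0}^{-1}PT_{0}^{-1}+O(\mu^{3}).
\]
Substituting and collecting the complex scalars as $\alpha_{1}=a_{1}$, $\alpha_{2}=a_{1}^{2}$ (both in $\mathbf{C}\setminus\mathbf{R}$, as $a_{1}$ is non-real), the remaining operators $B_{5}^{0},B_{5}^{1},B_{5}^{2}$ are built from the self-adjoint kernels $G_{0},I,G_{2},G_{3}$, the self-adjoint operator $T_{0}^{-1}$ and $P$, hence are self-adjoint.

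\textbf{Resonant cases (2)--(4).} If $T_{0}$ is not invertible, let $S_{1}$ be the Riesz projection onto $\ker T_{0}$, of finite rank by Remark~\ref{properties-of-S-T}. Since $T_{0}+S_{1}$ is invertible, the standard inversion lemma of Jensen--Nenciu type reduces inverting $M(\mu)$ to inverting $m_{1}(\mu):=S_{1}-S_{1}(M(\mu)+S_{1})^{-1}S_{1}$ on $S_{1}L^{2}$, via
\[
[M(\mu)]^{-1}=(M(\mu)+S_{1})^{-1}+(M(\mu)+S_{1})^{-1}S_{1}\,[m_{1}(\mu)]^{-1}\,S_{1}(M(\mu)+S_{1})^{-1}.
\]
Expanding $(M(\mu)+S_{1})^{-1}$ by Neumann series and using $S_{1}(T_{0}+S_{1})^{-1}=S_{1}$ gives $m_{1}(\mu)=a_{1}\|v\|_{L^{2}}^{2}\mu\,T_{1}+O(\mu^{2})$ with $T_{1}=S_{1}PS_{1}$. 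If $T_{1}$ is invertible on $S_{1}L^{2}$ (first kind resonance) then $[m_{1}(\mu)]^{-1}=\tfrac{1}{a_{1}\|v\|_{L^{2}}^{2}\mu}T_{1}^{-1}+O(1)$, which produces a $\mu^{-1}$ leading term for $[M(\mu)]^{-1}$ and hence for $R_{V}(\mu^{4})$, giving (2). If $T_{1}$ is not invertible, let $S_{2}$ be the Riesz projection onto $\ker T_{1}$; then $T_{1}S_{2}=0$ forces $PS_{2}=0$, so the $\mu$- and $\mu^{2}$-contributions of $P$ drop out on $S_{2}L^{2}$, and applying the inversion lemma a second time the first surviving term of the twice-reduced operator is proportional to $\mu^{2}T_{2}$ with $T_{2}=S_{2}vG_{2}vS_{2}$; invertibility of $T_{2}$ (second kind resonance) raises the singularity to $\mu^{-3}$, giving (3). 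Finally, if $T_{2}$ is not invertible, $S_{3}$ = Riesz projection onto $\ker T_{2}$ also kills the $G_{2}$-term ($S_{3}vG_{2}vS_{2}=0$), so the first surviving term is proportional to $\mu\,T_{3}$ with $T_{3}=S_{3}vG_{3}vS_{3}$; once $T_{3}$ is known to be invertible on $S_{3}L^{2}$ the leading term is of order $\mu^{-4}$, giving (4) — consistently with that $\mu^{-4}$ coefficient being $-P_{0}$, the self-adjoint projection onto the zero eigenspace, which is why it carries no complex factor. In each case, self-adjointness of the $B_{5,k}^{-j}$ follows by pulling out the scalars generated by $a_{1},a_{2}$ and their products and noting that the residual operators are real combinations of $G_{0},G_{2},G_{3}$, the $S_{j}$, $P$, and self-adjoint inverses on $S_{j}L^{2}$.

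\textbf{Main obstacle.} The delicate part is the bookkeeping inside the iteration: pinning down the exact power of $\mu$ at which each $T_{j}$ first appears in the reduced operators $m_{j}(\mu)$, which rests on vanishing relations such as $PS_{2}=0$ and $S_{3}vG_{2}vS_{2}=0$ (each a consequence of Definition~\ref{resonance} but to be verified term by term), and on the fact that the iteration halts at the third step, i.e. the ``third kind resonance'' is a genuine eigenvalue. The latter, together with the identification of $S_{j}L^{2}$ with the explicit spaces of distributional solutions of $H\psi=0$ (hence the equivalence of Definitions~\ref{resonance'} and \ref{resonance}), is precisely what the companion classification propositions provide; granting those, the remainder is a lengthy but mechanical composition of the expansions, with the hypotheses $\beta>11$ and $s>5+\tfrac12$ used only to keep all error terms in $B(0,0)$, respectively $B(s,-s)$.
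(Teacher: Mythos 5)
Your proposal follows essentially the same route as the paper: the symmetric resolvent identity combined with Lemma \ref{free-expansions}(1), a Neumann series in the regular case, and the iterated Jensen--Nenciu inversion through $S_{1},S_{2},S_{3}$ using the orthogonality relations $PS_{2}=0$ and $S_{3}vG_{2}vS_{2}=0$ to locate the leading powers $\mu^{-1},\mu^{-3},\mu^{-4}$; this is exactly Lemmas \ref{regular}--\ref{stop} of the paper. The one ingredient you defer but slightly mis-attribute is the termination of the iteration: $\ker(S_{3}vG_{3}vS_{3})=\{0\}$ does not follow from the classification propositions alone, and the paper proves it by a separate Plancherel-type computation (Lemma \ref{stop}), namely $\langle G_{3}vf,vf\rangle=\int_{\mathbf{R}^{5}}|\widehat{vf}(\xi)|^{2}|\xi|^{-8}\,{\rm d}\xi$, which forces $vf=0$ and hence $f=-UvG_{0}vf=0$; you should supply this argument (or an equivalent one) rather than granting it.
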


\begin{remark}
	Noting that the weight power $s$ here we choose is to ensure that we can get the asymptotic expansions for the last kind of resonance. Actually, for the regular, first kind and second kind of resonance cases we do not need to choose $s$ so big. The reason is that we do not need to expand the free resolvent $R_{0}(\mu^{4})$ to the order $\mu^4$. The principle we choose $s$ is to ensure that the asymptotic expansion of $R_{0}(\mu^4)$ is valid in $B(s, -s)$ with $R_{0}(\mu^4)$ expanding to the order we need for each kind of resonances. For instant, for $d=5$ in the regular case we only need expand $R_{0}(\mu^4)$ to the order $\mu^{2}$, thus only need $s>2+1/2$.
\end{remark}

Now, we give the classification of the spectral subspaces of $d=5$.

\begin{proposition}\label{classification-5}
	For $d=5$, let $|V(x)|\lesssim(1+|x|)^{-\beta}$ with some $\beta>11$. Then the following statements hold:
	
	(1)  $\phi\in S_{1}L^{2}(\mathbf{R}^{5})\setminus\{0\}$ if and only if  $\phi=Uv\psi$ where $\psi\in W_{3/2}(\mathbf{R}^{5})$ satisfies $H\psi=0$ in distributional sense and
	\begin{equation*}
	\psi(x)=-\int_{\mathbf{R}^{5}}\frac{a_{0}}{|x-y|}v(y)\phi(y){\rm d}y.
	\end{equation*}
	
	 (2)  $\phi=Uv\psi\in S_{2}L^{2}(\mathbf{R}^{5})\setminus\{0\}$ if and only if $\psi\in W_{1/2}(\mathbf{R}^{5})$.
	
	 (3)  $\phi=Uv\psi\in S_{3}L^{2}(\mathbf{R}^{5})\setminus\{0\}$ if and only if $\psi\in L^{2}(\mathbf{R}^{5})$.
\end{proposition}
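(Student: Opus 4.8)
The plan is to run the standard Jensen--Nenciu-type identification of each Riesz projection $S_j$ with a concrete threshold subspace, transporting the operator-theoretic kernel conditions ($T_0\phi=0$, $T_1\phi=0$, $T_2\phi=0$) through the substitution $\psi=-G_0 v\phi$ and reading off the decay of $\psi$ from mapping properties of the Riesz potential $G_0$ and the vanishing/orthogonality conditions forced at each stage. Concretely, for part (1), I would start from $\phi\in S_1L^2$, i.e. $T_0\phi=(U+vG_0v)\phi=0$; multiplying by $U$ and using $U^2=1$, $w=Uv$ gives $\phi=-UvG_0v\phi$. Setting $\psi:=-G_0v\phi=-\int a_0|x-y|^{-1}v(y)\phi(y)\,dy$ one checks $\phi=Uv\psi$, and since $\phi\in L^2$ and $v$ decays like $(1+|x|)^{-\beta/2}$ with $\beta>11$, the function $v\phi$ lies in $L^2_{\beta/2}$; by the boundedness of $G_0=a_0(-\Delta)^{-2}\cdot$... wait, $G_0$ here is $a_0|x-y|^{-1}$, which in $d=5$ is (a multiple of) $(-\Delta)^{-2}$. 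So $G_0:L^2_{s}\to L^2_{-s'}$ for appropriate $s,s'$ (Lemma \ref{Reisz-potential-boundedness} / \cite[Lemma 2.3]{J}) shows $\psi\in W_{3/2}(\mathbf R^5)$; the distributional equation $(-\Delta)^2\psi=-V\psi$, i.e. $H\psi=0$, follows by applying $(-\Delta)^2$ to the defining integral. Conversely, given such a $\psi$ with $H\psi=0$, one sets $\phi=Uv\psi$ and reverses the computation, checking $\phi\in L^2$ (this uses $\psi\in W_{3/2}$ together with the decay of $v$) and $T_0\phi=0$.

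For parts (2) and (3) the idea is to refine this: $\phi\in S_2L^2$ means $\phi\in S_1L^2$ and additionally $T_1\phi=S_1PS_1\phi=0$, i.e. $P\phi=0$, i.e. $\langle v,\phi\rangle=0$. This orthogonality is exactly the cancellation condition that upgrades the decay of $\psi=-G_0v\phi$: writing the expansion $|x-y|^{-1}=|x|^{-1}+\cdots$ for large $|x|$, the leading $|x|^{-1}$ term carries the factor $\int v(y)\phi(y)\,dy=\langle v,\phi\rangle$ (up to $U$), which now vanishes, so $\psi$ decays one order faster and lands in $W_{1/2}(\mathbf R^5)$; conversely a faster-decaying $\psi$ forces $\langle v,\phi\rangle=0$. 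Then $\phi\in S_3L^2$ adds the condition $T_2\phi=S_2vG_2vS_2\phi=0$ with $G_2(x,y)=|x-y|^2$; this is the next moment/orthogonality condition ($\langle |x-y|^2 v,\phi\rangle$-type vanishing, after subtracting the already-known cancellations), which pushes $\psi$ into $L^2(\mathbf R^5)$, so that $\psi$ is a genuine zero eigenfunction. In each case the converse direction is the ``if'' part: starting from the stated decay class of $\psi$ and $H\psi=0$, one recovers $\phi=Uv\psi\in L^2$ and checks the successive kernel conditions by expanding $G_0,G_2$ and matching orders.

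The main obstacle I expect is the bookkeeping of exactly which moment of $v\phi$ is killed at each stage and matching it precisely against the asymptotic expansion of the free resolvent kernel in Lemma \ref{free-expansions}(1): the term $a_1\mu|x-y|^0$ does not contribute to $\psi=-G_0v\phi$ directly (it is the $\mu^1$ coefficient, not part of $G_0$), so one must be careful that the relevant cancellations come from the Taylor expansion of $|x-y|^{-1}$ and $|x-y|^2$ in $y$ near the origin (equivalently, in the region $|x|\gg|y|$ on $\mathrm{supp}\,v$), not from the $\mu$-expansion; the two expansions are intertwined in the definition of $T_1=S_1PS_1$ and $T_2=S_2vG_2vS_2$, and keeping the algebra of the projections $S_1\ge S_2\ge S_3$ straight while tracking the induced weight gains is the delicate point. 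A secondary technical issue is verifying that $\beta>11$ is exactly what is needed for $G_0(v\phi)$ to be controlled in the weighted spaces defining $W_{3/2},W_{1/2}$ and $L^2$ (the threshold $4-d/2=3/2$ and the integrability tail $\beta+d/2-4$ from the Remark must accommodate all three levels simultaneously), and that the integral representation of $\psi$ is well-defined and satisfies $H\psi=0$ in the distributional sense for $\phi$ merely in $L^2$ (not a priori compactly supported).
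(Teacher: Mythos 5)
Your proposal is correct and follows essentially the same route as the paper: part (1) via the substitution $\psi=-G_{0}v\phi$ together with the Riesz-potential bound $I_{4}\in B(s,-s')$, part (2) via the equivalence $S_{1}PS_{1}\phi=0\Leftrightarrow\langle v,\phi\rangle=0$ and the resulting cancellation of the leading $|x|^{-1}$ term in the Taylor expansion of the kernel, and part (3) via the vanishing of the first moments $\int y_{j}v(y)\phi(y)\,{\rm d}y$ extracted from $\langle G_{2}v\phi,v\phi\rangle=-2\bigl|\int yv\phi\,{\rm d}y\bigr|^{2}=0$. Your closing caveat that the weight gains come from the spatial Taylor expansion of $|x-y|^{-1}$ rather than the $\mu$-expansion is exactly the point the paper's Lemmas 4.9--4.11 implement.
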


\subsection{Resolvent expansion for $d=6$}
In the 6-dimensional case, by Lemma \ref{free-expansions}(2),  in $B(s, -s)$ with $s>7$, we have
\begin{equation*}\label{free-resolvent-6-re}
R_{0}(\mu^4; x, y)=G_{0}+c_{1}\mu^2 I+\mu^{4}c(\mu)G_{2}+\mu^{4}G_{3}+O(\mu^{6-})
\end{equation*}
where
\begin{align*}
&G_{0}(x,y)=c_{0}|x-y|^{-2}; \,\,\,\, c(\mu)=c_{2}\ln(\mu)+c_{3};\\
&G_{2}(x,y)=|x-y|^{4}; \,\,\,\, G_{3}(x,y)=c_{4}|x-y|^{4}\ln(|x-y|).
\end{align*}

The definition of resonance for $d=6$ is the same as Definition \ref{resonance} by replacing the representations of $G_{j}$ into the corresponding case of $d=6$.   Next, we give the expansions of $R_{V}(\mu)$ in $B(s, -s)$ with $s>7$.
\begin{theorem}\label{RV-expansions-6}
For $d=6$ and $0<|\mu|\ll1$,	assume $|V(x)|\lesssim (1+|x|)^{-\beta}$ with some $\beta>14$.  Let $s>7$.   Then in $B(s, -s)$ we have:
	
	(1) If $0$ is a regular  point of $H$, then
	\begin{equation*}\label{RV-6-0}
	R_{V}(\mu^4)=B_{6}^{0}+\mu^{2}\rho_{1, 1}B_{6}^{1}+\mu^{4}\ln(\mu)B_{6}^{2}+\mu^{4}\rho_{2, 1}B_{6}^{3}+O(\mu^{6-}),
	\end{equation*}
	where $B_{6}^{j}\in B(s, -s)$ are self-adjoint operators for $0\leq j\leq 3$ and $\rho_{1}, \rho_{2}\in\mathbf{C}\setminus\mathbf{R}$.
	
	(2) If $0$ is of the first kind resonance of $H$, then
	\begin{equation*}\label{RV-6-1}
	R_{V}(\mu^4)=\frac{1}{\mu^2}\rho_{-2, 1}B_{6, 1}^{-2}+\ln(\mu)B_{6, 1}^{-1}+\rho_{0, 1}B_{6, 1}^{0}+O(\mu^{2-}),
	\end{equation*}
	where $B_{6, 1}^{-j}\in B(s, -s)$ are self-adjoint operators for $j=0, 1, 2$ and $\rho_{-2, 1}, \rho_{0, 1}\in\mathbf{C}\setminus\mathbf{R}$.
	
	(3) If $0$ is of the second kind resonance of $H$, then
	\begin{equation*}\label{RV-6-2}
	R_{V}(\mu^4)=\frac{1}{\mu^{4} c(\mu)}\rho_{-4, 2}B_{6, 2}^{-4}+\frac{1}{\mu^{4}(c(\mu))^{2}}\rho_{-3, 2}B_{6, 2}^{-3}+\frac{1}{\mu^2}\rho_{-2, 2}B_{6, 2}^{-2}+\frac{1}{\mu^2 c(\mu)}\rho_{-1, 2}B_{6, 2}^{-1}+\rho_{0, 2}B_{6, 2}^{0}+O\big((c(\mu))^{-1}\big),
	\end{equation*}
	where $B_{6, 2}^{-j}\in B(s, -s)$ are self-adjoint operators and $\rho_{-j, 2}\in\mathbf{C}\setminus\mathbf{R}$ for $0\leq j\leq4$ .
	
	(4) If $0$ is an eigenvalue of $H$, then
	\begin{equation*}\label{RV-6-3}
	\begin{split}
	R_{V}(\mu^4)=&\frac{1}{\mu^{4}}B_{6, 3}^{-5}+\frac{1}{\mu^{4}c(\mu)}\rho_{-4,3}B_{6, 3}^{-4}+\frac{1}{\mu^2}\rho_{-3, 3}B_{6, 3}^{-3}\\
	&+(c(\mu))^{2}\rho_{-2, 3}B_{6, 3}^{-2}+c(\mu)\rho_{-1, 3}B_{6, 3}^{-1}+\rho_{0, 3}B_{6, 3}^{0}+O\big((c(\mu))^{-1}\big),
	\end{split}
	\end{equation*}
	where $B_{6, 3}^{-j}\in B(s, -s)$ are self-adjoint operators for $0\leq j\leq5$ and $\rho_{-j, 3}\in\mathbf{C}\setminus\mathbf{R}$ for $0\leq j\leq 3$.
\end{theorem}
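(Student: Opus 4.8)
\textbf{Proof proposal for Theorem \ref{RV-expansions-6}.}

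The plan is to carry out the same symmetric-resolvent-identity scheme as in the $d=5$ case (Theorem \ref{RV-expansions-5}), but with the logarithmic factor $c(\mu)=c_2\ln\mu+c_3$ now intruding into every order past $\mu^4$. The starting point is the identity \eqref{symmetric-resolvent-idnetity}, which reduces everything to expanding $[M(\mu)]^{-1}$ in $B(0,0)$, where $M(\mu)=U+vR_0(\mu^4)v$. Using the free expansion from Lemma \ref{free-expansions}(2) rewritten as $R_0(\mu^4)=G_0+c_1\mu^2 I+\mu^4 c(\mu)G_2+\mu^4 G_3+O(\mu^{6-})$ in $B(s,-s)$ with $s>7$, and the hypothesis $w(x)(1+|x|)^s\in L^\infty$ (guaranteed by $\beta>14>2s$), I get
\begin{equation*}
M(\mu)=T_0+c_1\mu^2\, v\langle v,\cdot\rangle+\mu^4 c(\mu)\, vG_2v+\mu^4\, vG_3v+O(\mu^{6-})
\end{equation*}
with $T_0=U+vG_0v$. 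Note $v\langle v,\cdot\rangle=\|v\|_2^2\,P$, so the $\mu^2$ coefficient is a rank-one operator, exactly as in Definition \ref{resonance}(2). Each of the four cases is then handled by the Feshbach/Jensen--Nenciu inversion lemma: if $T_0$ is invertible one inverts by a Neumann series (case (1)); otherwise one passes to $S_1L^2$, inverts $T_1=S_1PS_1$ there if possible (case (2)), and so on down to $S_3$.

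The key steps, in order: (i) Record the operator-valued expansion of $M(\mu)$ above and identify which powers of $\mu$ and which powers of $c(\mu)$ appear; the crucial bookkeeping point is that the first correction beyond $T_0$ is at order $\mu^2$ (regular/first-kind split), while the second genuine correction sits at order $\mu^4 c(\mu)$, i.e. $\mu^4\ln\mu$, which is why $c(\mu)$ and its reciprocal powers appear as the natural expansion variable once $S_1\neq 0$. (ii) In the regular case, expand $M(\mu)^{-1}=T_0^{-1}-c_1\mu^2 T_0^{-1}PT_0^{-1}\|v\|_2^2+\cdots$ and feed back into \eqref{symmetric-resolvent-idnetity}, collecting the leading coefficients; the orders $1,\mu^2,\mu^4\ln\mu,\mu^4$ match the claimed form with remainder $O(\mu^{6-})$. (iii) In the first-kind case, apply the inversion lemma with the splitting $L^2=S_1L^2\oplus(1-S_1)L^2$: on $(1-S_1)L^2$ the operator $T_0+S_1$ is invertible, and the Schur complement on $S_1L^2$ has leading term $c_1\mu^2\,S_1PS_1=c_1\mu^2 T_1$, which is invertible by assumption; inverting produces a $\mu^{-2}$ pole, and tracking the next orders ($\mu^4\ln\mu$ against $\mu^2$, i.e. $\mu^2\ln\mu$; then $\mu^2$) gives the stated $\mu^{-2},\ln\mu,1$ expansion with remainder $O(\mu^{2-})$. (iv) In the second-kind case, $T_1=S_1PS_1$ fails to be invertible on $S_1L^2$; pass to $S_2L^2$ where (after the $P$-term is exhausted) the leading surviving term comes from $\mu^4 c(\mu)\,S_2vG_2vS_2=\mu^4 c(\mu)T_2$, invertible by hypothesis — this is precisely why the expansion variable becomes $1/(\mu^4 c(\mu))$ and why powers $c(\mu)^{-1},c(\mu)^{-2}$ show up when the Schur-complement inversion is iterated. (v) In the eigenvalue case, $T_2$ also fails; one more descent to $S_3L^2$ with leading term $\mu^4\,S_3vG_3vS_3=\mu^4 T_3$ invertible, yielding the $\mu^{-4}$ pole and, from the lower-order mixing of $\mu^4$ against $\mu^4 c(\mu)$ terms, the $(\mu^4 c(\mu))^{-1}$, $c(\mu)^2$, $c(\mu)$ corrections; then substitute back into \eqref{symmetric-resolvent-idnetity} and contract with the surrounding $R_0(\mu^4)v$ factors (whose own $\mu^2,\mu^4\ln\mu,\mu^4$ expansion must be multiplied through) to reach the displayed five-term expansion. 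Self-adjointness of each $B_{6,k}^{-j}$ and the fact that the scalar prefactors $\rho$ lie in $\mathbf C\setminus\mathbf R$ follow from the reality/imaginary structure of the coefficients $c_j$ in Lemma \ref{free-expansions}(2) together with $M(\mu)^*=M(\bar\mu)$-type symmetry, handled exactly as in the $d=5$ proof.

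The main obstacle is the bookkeeping around the logarithm: unlike the purely polynomial $d=5$ case, in $d=6$ the Schur-complement inversions mix the scales $\mu^4$, $\mu^4\ln\mu$, and $\mu^4(\ln\mu)^2$ (the latter arising when a $\mu^4 c(\mu)$ term is divided by another $\mu^4 c(\mu)$ term in an iterated Neumann expansion), and one must check carefully that no uncontrolled $\mu^4(\ln\mu)^k$ with larger $k$ survives at the orders claimed, and that the error terms $O(\mu^{2-})$, $O((c(\mu))^{-1})$ are genuinely of that size in $B(0,0)$. Concretely, the delicate point in case (4) is verifying that the operator $S_2 v G_2 v S_2$ — not some interaction term — really is the leading nonvanishing piece of the Schur complement on $S_2L^2$ after the rank-one $P$-contribution is removed; this is where the orthogonality relations $S_2\le S_1$, $S_2 P=PS_2=0$-type identities (consequences of Definition \ref{resonance} and Remark \ref{properties-of-S-T}(3)) must be invoked to kill the would-be $\mu^2$ and $\mu^2\ln\mu$ terms on $S_2L^2$. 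Once those cancellations are in hand, the rest is the routine (if lengthy) Neumann-series algebra, and the classification of the subspaces $S_jL^2$ in terms of the resonance functions $\psi$ (the $d=6$ analogue of Proposition \ref{classification-5}) is what certifies that the three descent steps exhaust all possibilities, so the iteration terminates at $S_3$.
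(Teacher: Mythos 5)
Your proposal follows essentially the same route as the paper: the symmetric resolvent identity reduces everything to $[M(\mu)]^{-1}$, and the iterated Feshbach/Jensen--Nenciu inversions through $S_1$, $S_2$, $S_3$ with the orthogonality $S_2P=PS_2=0$ and the dominance of the $\mu^2 c(\mu)\,S_2vG_2vS_2$ term are exactly the paper's Lemmas on the regular, first, second, and eigenvalue cases. The only small point of difference is that the paper certifies termination at $S_3$ not via the classification of resonance spaces but by a direct Plancherel argument showing $\ker(S_3vG_3vS_3)=\{0\}$; otherwise your bookkeeping of the $\mu^{-4}$, $(\mu^4c(\mu))^{-1}$, and $(\ln\mu)^{\pm k}$ scales matches the paper's.
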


Now, we give the classification of the spectral subspaces of $d=6$.

\begin{proposition}\label{classification-6}
For $d=6$,	let $|V(x)|\lesssim(1+|x|)^{-\beta}$ with some $\beta>14$. Then  the following statements hold:
	
	(1)  $\phi\in S_{1}L^{2}(\mathbf{R}^{6})\setminus\{0\}$ if and only if  $\phi=Uv\psi$ where $\psi\in L^{2, }_{-\sigma}(\mathbf{R}^{6})$ with $\sigma\in (1,\,\, \beta-1)$ satisfies $H\psi=0$ in distributional sense and
	\begin{equation*}
	\psi(x)=-\int_{\mathbf{R}^{6}}\frac{c_{0}}{|x-y|^{2}}v(y)\phi(y){\rm d}y.
	\end{equation*}
	
	(2)  $\phi=Uv\psi\in S_{2}L^{2}(\mathbf{R}^{6})\setminus\{0\}$ if and only if $\psi\in L^{2}_{-\sigma}(\mathbf{R}^{6})$ with $\sigma\in (0,\,\, \beta-1)$.
	
	(3)  $\phi=Uv\psi\in S_{3}L^{2}(\mathbf{R}^{6})\setminus\{0\}$ if and only if $\psi\in L^{2}(\mathbf{R}^{6})$.
\end{proposition}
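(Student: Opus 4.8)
\medskip
\noindent\textbf{Proof proposal.}
The plan is to run the same argument as for Proposition~\ref{classification-5}. For $\phi\in L^{2}(\mathbf{R}^{6})$ set $\psi:=-G_{0}v\phi$, i.e. $\psi(x)=-c_{0}\int|x-y|^{-2}v(y)\phi(y)\,dy$, so that $G_{0}=(-\Delta)^{-2}$ on $\mathbf{R}^{6}$; then I would translate each defining relation $T_{j-1}\phi=0$ (Definition~\ref{resonance}) for membership in $S_{j}L^{2}$ into an equivalent statement about $\psi$. The algebraic ingredients are $U^{2}=\mathrm{id}$ on $\mathrm{supp}\,v$, $v^{2}=|V|$, $vUv=V$; the analytic ingredients are the weighted mapping properties of $(-\Delta)^{-2}$ (Lemma~\ref{Reisz-potential-boundedness}) and the $s$-independence of the resonance spaces $\mathfrak{M}_{s}$ recalled after Definition~\ref{resonance'}. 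Throughout $\beta>14$.

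For (1): if $\phi\in S_{1}L^{2}=\ker T_{0}$, then $0=U\phi+vG_{0}v\phi$ gives $U\phi=v\psi$, hence $\phi=Uv\psi$ and $v\phi=V\psi$; applying $(-\Delta)^{2}$ to $\psi=-G_{0}(v\phi)$ gives $(-\Delta)^{2}\psi=-V\psi$, i.e. $H\psi=0$. Since $\phi\in L^{2}$ and $v$ decays like $(1+|x|)^{-\beta/2}$, we get $v\phi\in L^{2}_{s}$ for $s\le\beta/2$, so $\psi\in L^{2}_{-\sigma}$ for every $\sigma\in(1,\beta-1)$ by Lemma~\ref{Reisz-potential-boundedness}, the range being $\sigma$-independent. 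Conversely, given $\phi=Uv\psi$ with $\psi\in L^{2}_{-\sigma}$, $\sigma\in(1,\beta-1)$, $H\psi=0$: then $(-\Delta)^{2}\psi=-V\psi$ with $V\psi\in L^{2}_{\beta-\sigma}$, so $\psi+(-\Delta)^{-2}(V\psi)$ is a tempered biharmonic function, hence a polynomial, which the decay of both terms forces to vanish; thus $\psi=-G_{0}(v\phi)$ and $T_{0}\phi=U(Uv\psi)+vG_{0}(V\psi)=v\psi-v\psi=0$, so $\phi\in S_{1}L^{2}$.

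For (2): since $S_{1}\phi=\phi$, $T_{1}\phi=S_{1}PS_{1}\phi=\|v\|^{-2}\langle v,\phi\rangle\,S_{1}v$, which vanishes iff $\langle v,\phi\rangle=0$ (if $S_{1}v=0$ then $v\perp S_{1}L^{2}$, so this holds automatically and $S_{2}=S_{1}$); hence $\phi\in S_{2}L^{2}\iff\phi\in S_{1}L^{2}$ and $\langle v,\phi\rangle=\int V\psi=0$. Inserting $|x-y|^{-2}=|x|^{-2}+2(x\cdot y)|x|^{-4}+O(|y|^{2}|x|^{-4})$ into $\psi(x)=-c_{0}\int|x-y|^{-2}V\psi\,dy$ shows that $\int V\psi=0$ is exactly the condition killing the $|x|^{-2}$ tail, leaving $\psi(x)=O(|x|^{-3})$, hence $\psi\in L^{2}_{-\sigma}$ for all $\sigma>0$; conversely $\psi\in L^{2}_{-\sigma}$ for any $\sigma\le1$ forces $\int V\psi=0$ since $|x|^{-2}\notin L^{2}_{-\sigma}(\mathbf{R}^{6})$ for such $\sigma$. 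With (1) this yields (2).

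For (3): since $S_{2}\phi=\phi$, $\phi\in S_{3}L^{2}\iff T_{2}\phi=S_{2}(vG_{2}v\phi)=0$ with $G_{2}(x,y)=|x-y|^{4}$. Writing $v\phi=V\psi$, expanding $|x-y|^{4}$ as a polynomial in $x$, and using $\int V\psi=0$ on $S_{2}L^{2}$ together with the orthogonality relations on $S_{2}L^{2}$ inherited from $T_{0}\phi'=0$ and $\langle v,\phi'\rangle=0$, one expects $T_{2}\phi=0$ to reduce to the vanishing of the first moments $\int y_{j}V\psi\,dy=0$, $j=1,\dots,6$. Independently, expanding $\psi=-(-\Delta)^{-2}(V\psi)$ one order further shows $\psi\in L^{2}(\mathbf{R}^{6})$ precisely when $\int V\psi=0$ and $\int y_{j}V\psi=0$ for all $j$ (the tail then decays like $|x|^{-4}$, square integrable in $\mathbf{R}^{6}$); as $\int V\psi=0$ already holds on $S_{2}L^{2}$ this gives $\phi\in S_{3}L^{2}\iff\psi\in L^{2}(\mathbf{R}^{6})$, which is the zero-eigenspace of $H$ (Remark~\ref{properties-of-S-T}(1)). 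The hard part is this last equivalence: the quartic kernel also produces terms with $\int|y|^{2}V\psi$, $\int y_{j}y_{k}V\psi$, $\int y_{j}|y|^{2}V\psi$, and one must show these are all annihilated by $S_{2}$. This needs the full set of relations $\langle v\,p,\phi'\rangle=0$ for low-degree polynomials $p$ on $S_{2}L^{2}$, whose proof must control the boundary terms at infinity arising when integrating $(-\Delta)^{2}\psi'=-V\psi'$ against polynomials; the crude bound $\psi'=O(|x|^{-3})$ is insufficient there, and one should feed in the refined asymptotics $\psi'(x)=c\,(x\cdot M_{1}')\,|x|^{-4}+O(|x|^{-4})$, $M_{1}'=\int yV\psi'\,dy$, from step~(2). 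A lesser point is the polynomial-uniqueness step in (1), which relies on $\sigma<\beta-1$.
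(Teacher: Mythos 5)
Parts (1) and (2) of your proposal are correct and follow the paper's own route essentially step for step: set $\psi=-G_{0}v\phi$ with $G_{0}=c_{0}|x-y|^{-2}=(-\Delta)^{-2}$, use Lemma \ref{Reisz-potential-boundedness} for $I_{4}$ to place $\psi$ in $W_{1}(\mathbf{R}^{6})$, and read off $\int v\phi\,dy=0$ from the $|x|^{-2}$ tail, which is not square integrable against the weight $(1+|x|)^{-\sigma}$ in $\mathbf{R}^{6}$ for $\sigma\le 1$. Your explicit justification of $H\psi=0\Leftrightarrow(I+G_{0}V)\psi=0$ via the vanishing tempered biharmonic polynomial is a detail the paper merely asserts, and it is welcome.

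Part (3) has a genuine gap, which you yourself flag: you never prove that, for $\phi\in S_{2}L^{2}$, the condition $T_{2}\phi=S_{2}vG_{2}vS_{2}\phi=0$ with $G_{2}(x,y)=|x-y|^{4}$ is equivalent to the vanishing of the first moments $\int y_{j}v(y)\phi(y)\,dy$. This equivalence is the crux of (3), and the route you sketch --- deriving extra orthogonality relations $\langle vp,\phi'\rangle=0$ for low-degree polynomials $p$ valid on all of $S_{2}L^{2}$ --- would fail, because the only constraint cutting $S_{2}L^{2}$ out of $S_{1}L^{2}$ is $\int v\phi=0$. Indeed, expanding the quartic kernel monomially gives, for real $\phi$ with $\int v\phi=0$,
\begin{equation*}
\langle G_{2}v\phi,v\phi\rangle=4\sum_{j,k}\Bigl(\int y_{j}y_{k}v\phi\Bigr)^{2}+2\Bigl(\int|y|^{2}v\phi\Bigr)^{2}-8\sum_{j}\Bigl(\int y_{j}|y|^{2}v\phi\Bigr)\Bigl(\int y_{j}v\phi\Bigr),
\end{equation*}
which is neither manifestly nonnegative nor algebraically forces $\int y_{j}v\phi=0$ when it vanishes. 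The missing idea is the limiting argument of Lemma \ref{stop}: on $S_{2}L^{2}$ write $G_{2}=\lim_{\mu\to0}(\mu^{4}c(\mu))^{-1}\bigl[R_{0}(\mu^{4})-G_{0}-c_{1}\mu^{2}I-\mu^{4}G_{3}-E_{2}(\mu)\bigr]$ and pass to the Fourier side. Since $\widehat{v\phi}(0)=\int v\phi=0$, the integral $\int|\widehat{v\phi}(\xi)|^{2}\,(|\xi|^{4}-\mu^{4})^{-1}|\xi|^{-4}\,d\xi$ diverges only logarithmically as $\mu\to0$, with coefficient proportional to $|\nabla\widehat{v\phi}(0)|^{2}=c\,|\int yv\phi\,dy|^{2}$; dividing by $c(\mu)=c_{2}\ln\mu+c_{3}$ extracts exactly this coefficient, so the bilinear form $\langle G_{2}v\phi,v\phi'\rangle$ restricted to $S_{2}L^{2}$ equals a nonzero constant times $(\int yv\phi)\cdot(\int yv\phi')$. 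This gives both directions of the equivalence at once, after which your decay count ($|x|^{-4}\in L^{2}(\mathbf{R}^{6})$ at infinity while $|x|^{-3}$ is not) correctly yields $\phi\in S_{3}L^{2}\Leftrightarrow\psi\in L^{2}(\mathbf{R}^{6})$. Be aware that the paper's own proof of Lemma \ref{s3-space} does not supply this step either: it evaluates the form with the kernel $|x|^{2}-2x\cdot y+|y|^{2}=|x-y|^{2}$, which is the $d=5$ kernel rather than $G_{2}$ or $G_{3}$ for $d=6$, so you cannot simply import that computation.
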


\subsection{Resolvent expansion for $d=7$}
In the 7-dimensional case, by Lemma \ref{free-expansions}(3),  in $B(s, -s)$ with $s>4+1/2$, we have
\begin{equation}\label{free-resolvent-7-re}
R_{0}(\mu^4; x, y)=G_{0}+b_{1}\mu^{3} I+\mu^{4} G_{2}+O(\mu^5)
\end{equation}
where
\begin{align*}
G_{0}(x,y)=b_{0}|x-y|^{-3};\,\,
G_{2}(x,y)=b_{2}|x-y|.
\end{align*}

The definition of resonance for $d=7$ is the same as Definition \ref{resonance} by replacing the representations of $G_{j}$ in the corresponding case $d=7$.  Note that, in the case $d=7$ there are only one kind of resonance. The second kind of ``resonance" is eigenvalue.

Next, we give the expansions of $R_{V}(\mu)$ in $B(s, -s)$ with $s>4+1/2$.
\begin{theorem}\label{RV-expansions-7}
 For $d=7$ and $0<|\mu|\ll1$,	assume $|V(x)|\lesssim (1+|x|)^{-\beta}$ with some $\beta>9$. Let $s>4+1/2$.  Then in $B(s, -s)$ we have:
	
	(1) If $0$ is a regular  point of $H$, then
	\begin{equation*}\label{RV-7-0}
	R_{V}(\mu^4)=B_{7}^{0}+\mu^{3}\beta_{1}B_{7}^{1}+\mu^{4}\beta_{2}B_{7}^{2}+O(\mu^5),
	\end{equation*}
	where $B_{7}^{j}\in B(s, -s)$ are self-adjoint operators for $j=0, 1, 2$ and $\beta_{1}, \beta_{2}\in\mathbf{C}\setminus\mathbf{R}$.
	
	(2) If $0$ is of the first kind resonance of $H$, then
	\begin{equation*}\label{RV-7-1}
	R_{V}(\mu^4)=\frac{1}{\mu^3}\beta_{-3, 1}B_{7, 1}^{-3}+\frac{1}{\mu^2}\beta_{-2, 1}B_{7, 1}^{-2}+\frac{1}{\mu}\beta_{-1, 1}B_{7, 1}^{-1}+\beta_{0, 1}B_{7, 1}^{0}+O(\mu),
	\end{equation*}
	where $B_{7, 1}^{-j}\in B(s, -s)$ are self-adjoint operators and $\beta_{-j, 1}\in\mathbf{C}\setminus\mathbf{R}$ for $0\leq j\leq3$.
	
	(3) If $0$ is an eigenvalue of $H$, then
	\begin{equation*}\label{RV-7-2}
	R_{V}(\mu^4)=\frac{1}{\mu^{4}}B_{7, 2}^{-4}+\frac{1}{\mu^{3}}\beta_{-3, 2}B_{7, 2}^{-3}+\frac{1}{\mu^2}\beta_{-2, 2}B_{7, 2}^{-2}+\frac{1}{\mu}\beta_{-1, 2}B_{7, 2}^{-1}+\beta_{0, 2}B_{7, 2}^{0}+O(\mu),
	\end{equation*}
	where $B_{7, 2}^{-4},  B_{7, 2}^{-j}\in B(s, -s)$ are self-adjoint operators and $\beta_{-j, 2}\in\mathbf{C}\setminus\mathbf{R}$ for $0\leq j\leq 3$ .
\end{theorem}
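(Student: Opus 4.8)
The plan is to run the same scheme used for $d=5$ in Theorem~\ref{RV-expansions-5}: substitute the free expansion \eqref{free-resolvent-7-re} (that is, Lemma~\ref{free-expansions}(3)) into the symmetric resolvent identity \eqref{symmetric-resolvent-idnetity} and reduce everything to an asymptotic inversion of $M(\mu)=U+vR_0(\mu^4)v$ in $B(0,0)$. Writing $G_0(x,y)=b_0|x-y|^{-3}$, $G_2(x,y)=b_2|x-y|$, and $I$ for the operator with kernel $|x-y|^0\equiv1$ (so $vIv$, having kernel $v(x)v(y)$, is a nonzero multiple of the projection $P$ fixed just before Definition~\ref{resonance}), the free expansion gives in $B(0,0)$
\begin{equation*}
M(\mu)=T_0+b_1\,\mu^3\,vIv+\mu^4\,vG_2v+E(\mu),\qquad\|E(\mu)\|_{B(0,0)}=O(\mu^5),
\end{equation*}
with $T_0=U+vG_0v$. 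Since $w(x)(1+|x|)^s\in L^\infty$ for $\beta>9$ and $s>4+1/2$, the weighted identity \eqref{weighted-resolvent-identity} then transfers any $B(0,0)$-expansion of $[M(\mu)]^{-1}$ into the claimed $B(s,-s)$-expansion of $R_V(\mu^4)$, once the outer factors $R_0(\mu^4)v$ and $vR_0(\mu^4)$ are expanded too; the scalar coefficients $\beta$ in the statement are just the constants of Lemma~\ref{free-expansions}(3) (and their products) pulled out, while the operators $B_7^j$, $B_{7,k}^{-j}$ are self-adjoint because every operator occurring is assembled from the self-adjoint $G_0,G_2,I,P$ and self-adjoint inverses in symmetric combinations.

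If $0$ is a regular point, $T_0$ is invertible on $L^2$ and a Neumann series gives $[M(\mu)]^{-1}=T_0^{-1}-b_1\mu^3 T_0^{-1}(vIv)T_0^{-1}-\mu^4 T_0^{-1}(vG_2v)T_0^{-1}+O(\mu^5)$; inserting this into \eqref{symmetric-resolvent-idnetity} and collecting the powers $\mu^0,\mu^3,\mu^4$ gives part~(1). If $T_0$ is not invertible we iterate the Jensen--Nenciu-type inversion lemma already used for $d=5$. With $S_1$ the Riesz projection onto $\ker T_0$ and $T_0+S_1$ invertible, the lemma expresses $[M(\mu)]^{-1}$ through $(M(\mu)+S_1)^{-1}$ and the inverse of $a(\mu):=\mu^{-3}\bigl(S_1-S_1(M(\mu)+S_1)^{-1}S_1\bigr)$, and using $S_1(T_0+S_1)^{-1}=S_1$ one computes $a(\mu)=c\,T_1+\mu\,S_1(vG_2v)S_1+O(\mu^2)$ with $T_1=S_1PS_1$ and $c\neq0$. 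If $0$ is a first-kind resonance, $T_1$ is invertible on $S_1L^2$, hence $a(\mu)^{-1}=c^{-1}T_1^{-1}+O(\mu)$, the leading singularity of $[M(\mu)]^{-1}$ is $\mu^{-3}$ times a multiple of $T_1^{-1}$, and expanding \eqref{symmetric-resolvent-idnetity} down through $\mu^0$ produces part~(2). If $T_1$ is not invertible, apply the lemma once more to $a(\mu)$ with $S_2$ the Riesz projection onto $\ker T_1\subset S_1L^2$: using $S_2T_1=T_1S_2=0$ and $S_2S_1=S_2$ one gets $b(\mu):=\mu^{-1}\bigl(S_2-S_2(a(\mu)+S_2)^{-1}S_2\bigr)=T_2+O(\mu)$, with $T_2=S_2(vG_2v)S_2$ invertible on $S_2L^2$ in this case (Definition~\ref{resonance}), so $a(\mu)^{-1}$ acquires a $\mu^{-1}$ singularity, $[M(\mu)]^{-1}$ a $\mu^{-4}$ one, and $R_V(\mu^4)$ a $\mu^{-4}$ one — this is part~(3). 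The $\mu^{-4}$ coefficient $B_{7,2}^{-4}$ carries no scalar factor because matching with the Laurent expansion $R_V(z)=-z^{-1}P_0+\cdots$ at the eigenvalue $z=\mu^4\to0$ forces $B_{7,2}^{-4}=-P_0$, the self-adjoint zero-energy eigenprojection, whereas the lower singular terms $\mu^{-3},\mu^{-2},\mu^{-1}$ inherit non-real scalars through $c\propto b_1$; this comparison also re-identifies $S_2L^2$ with the genuine $L^2(\mathbf{R}^7)$ eigenfunctions, as recorded in Remark~\ref{properties-of-S-T}(4), while the fact that these three cases exhaust the possibilities for $d=7$ (no further $T_3$-stage) belongs to the accompanying classification.

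The conceptual content is thus entirely the $d=5$ argument with $G_0,G_2$ and the relevant powers of $\mu$ replaced by their $d=7$ counterparts; the only real difficulty I anticipate is bookkeeping. One must check that the $O(\mu^5)$ tail of $R_0(\mu^4)$ in $B(s,-s)$ — and not merely in $B(0,0)$ — survives each application of the inversion lemma and each sandwiching $R_0(\mu^4)v\,[\,\cdot\,]\,vR_0(\mu^4)$ with exactly the advertised remainders $O(\mu^5)$, $O(\mu)$, $O(\mu)$ in the three cases; this is precisely why $\beta>9$ and $s>4+1/2$ are imposed, so that $w(x)(1+|x|)^s\in L^\infty$, all operators $v\,|x-y|^k\,v$ that occur are Hilbert--Schmidt on $L^2$, and the expansion of $R_0(\mu^4)$ is valid in $B(s,-s)$ to the order each case requires (cf.\ the Remark following Theorem~\ref{RV-expansions-5}). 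One also has to verify the algebraic identities that make the two successive ``remainders'' collapse to exactly $T_1$ and $T_2$ — chiefly $S_1(T_0+S_1)^{-1}=S_1$, $S_2(T_1+S_2)^{-1}=S_2$, $S_2S_1=S_2$, and $S_2T_1=T_1S_2=0$ from Remark~\ref{properties-of-S-T}(3) — and that the cross terms generated in the double iteration cancel consistently; none of this is deep, but keeping all of it consistent across both iterations is where the care is needed.
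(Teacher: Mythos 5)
Your proposal follows exactly the route the paper itself prescribes for $d=7$: the paper gives no separate argument, stating only that this case ``follows by the similar process of $d=5$,'' and your two-step Jensen--Nenciu iteration with $T_1=S_1PS_1$ and $T_2=S_2vG_2vS_2$, fed by the expansion $R_0(\mu^4)=G_0+b_1\mu^3 I+\mu^4 G_2+O(\mu^5)$ and the symmetric resolvent identity, is precisely that process transplanted to seven dimensions, with the correct singular orders $\mu^{-3}$ and $\mu^{-4}$ emerging from the prefactors $\mu^{-3}$ and $\mu^{-3}\cdot\mu^{-1}$. The one point you rightly flag but do not resolve --- whether the $O(\mu^5)$ tail of the free expansion, under only $\beta>9$ and $s>4+1/2$, is fine enough after division by $\mu^{3}$ in the inversion lemma to deliver the advertised $O(\mu)$ remainders in cases (2) and (3) --- is left equally unaddressed by the paper, so your write-up is faithful to, and no weaker than, the source.
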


Now, we give the classification of the spectral subspaces of $d=7$.

\begin{proposition}\label{classification-7}
For $d=7$, let $|V(x)|\lesssim(1+|x|)^{-\beta}$ with some $\beta>9$.   The following statements hold:
	
	(1) $\phi\in S_{1}L^{2}(\mathbf{R}^{7})\setminus\{0\}$ if and only if  $\phi=Uv\psi$ where $\psi\in W_{1/2}(\mathbf{R}^{7})$  satisfies $H\psi=0$ in distributional sense and
	\begin{equation*}
	\psi(x)=-\int_{\mathbf{R}^{5}}\frac{b_{0}}{|x-y|^{3}}v(y)\phi(y){\rm d}y.
	\end{equation*}
	
	(2)  $\phi=Uv\psi\in S_{2}L^{2}(\mathbf{R}^{7})\setminus\{0\}$ if and only if $\psi\in L^{2}(\mathbf{R}^{7})$.
\end{proposition}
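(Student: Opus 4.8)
The plan is to make the Birman--Schwinger type correspondence $\phi\leftrightarrow\psi$ explicit and then read off the sharp weighted--space membership of $\psi$ from the mapping properties of the Riesz potential $G_0=R_0(0)=(-\Delta)^{-2}$, whose kernel on $\mathbf R^7$ is exactly $G_0(x,y)=b_0|x-y|^{-3}$, together with a short Fourier computation at the origin. I will use throughout that $V=vw$ with $w=Uv$, that $U(Uv)=v$ and $vUv=V$, and the elementary fact that for $\psi\in W_{1/2}(\mathbf R^7)$ one has $H\psi=0$ distributionally if and only if $(1+R_0(0)V)\psi=0$ (a biharmonic tempered distribution is a polynomial, and the only polynomial lying in $W_{1/2}(\mathbf R^7)$ is $0$). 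The hypothesis $\beta>9$ is what makes the weighted bookkeeping close.

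\emph{Part (1).} I would argue the two inclusions separately. If $\phi\in S_1L^2\setminus\{0\}$, then $\phi\in L^2$ and $T_0\phi=U\phi+vG_0v\phi=0$; applying $U$ gives $\phi=-Uv(G_0v\phi)=Uv\psi$ with $\psi:=-G_0v\phi$, which is the asserted integral formula. Since $\phi\in L^2$ and $|V|\lesssim(1+|x|)^{-\beta}$, the function $v\phi$ lies in $L^2_{s_1}(\mathbf R^7)$ for every $s_1\le\beta/2$, in particular for some $s_1>\tfrac12$; by the boundedness of $R_0(0)$ from $L^2_{s_1}$ to $L^2_{-s_2}$ whenever $s_1,s_2>4-\tfrac d2=\tfrac12$ (Lemma~\ref{Reisz-potential-boundedness}), this yields $\psi\in L^2_{-s_2}(\mathbf R^7)$ for every $s_2>\tfrac12$, i.e. $\psi\in W_{1/2}(\mathbf R^7)$. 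Moreover $v\phi=vUv\psi=V\psi$, so $\psi=-R_0(0)V\psi$, i.e. $H\psi=0$, and $\psi\neq0$ since $\phi=Uv\psi\neq0$. Conversely, given a nonzero $\psi\in W_{1/2}$ with $H\psi=0$, set $\phi:=Uv\psi$; then $v\phi=V\psi$ and $\psi=-R_0(0)V\psi=-R_0(0)v\phi=-G_0v\phi$ (the integral formula), one has $\phi\in L^2$ because $\|\phi\|_{L^2}^2=\int|V|\,|\psi|^2\lesssim\int(1+|x|)^{-\beta}|\psi|^2<\infty$ (as $\beta>2s_2$ for $s_2$ near $\tfrac12$), and $\phi\neq0$ since $\phi=0$ would force $V\psi=0$ hence $\psi=-R_0(0)V\psi=0$; finally $T_0\phi=U\phi+vG_0v\phi=v\psi+v(-\psi)=0$, so $\phi\in S_1L^2\setminus\{0\}$.

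\emph{Part (2).} I would first identify $S_2L^2$. Since $S_2$ is the orthogonal Riesz projection onto $\ker T_1$ inside $S_1L^2$, $T_1=S_1PS_1$, and $P=\|v\|_{L^2}^{-2}v\langle v,\cdot\rangle$ is positive, for $\phi\in S_1L^2$ one has $\langle T_1\phi,\phi\rangle=\langle P\phi,\phi\rangle=\|v\|_{L^2}^{-2}|\langle v,\phi\rangle|^2$; as $T_1\ge0$ this forces $T_1\phi=0\iff\langle v,\phi\rangle=0$, so $S_2L^2=\{\phi\in S_1L^2:\langle v,\phi\rangle=0\}$. For $\phi=Uv\psi\in S_1L^2$ as in Part (1), $\langle v,\phi\rangle=\int vUv\,\psi=\int_{\mathbf R^7}V\psi\,dx$, so $\phi\in S_2L^2\iff\int V\psi=0$. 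It then remains to show $\int V\psi=0\iff\psi\in L^2$. Put $g:=V\psi$; since $\beta>9$ and $\psi\in W_{1/2}$, $g$ decays fast enough that $g,|x|g\in L^1$, so $\widehat g$ is $C^1$ near the origin with $\widehat g(0)$ a nonzero multiple of $\int V\psi$. From $\psi=-R_0(0)g$ we get $\widehat\psi(\xi)=-|\xi|^{-4}\widehat g(\xi)$ (a locally integrable density on $\mathbf R^7$). On $\{|\xi|>1\}$ one always has $|\xi|^{-4}\widehat g\in L^2$ since $\widehat g\in L^\infty$; near the origin $|\xi|^{-4}\widehat g(\xi)=\widehat g(0)|\xi|^{-4}+O(|\xi|^{-3})$, and $|\xi|^{-3}\in L^2$ near the origin of $\mathbf R^7$ whereas $|\xi|^{-4}\notin L^2$ there. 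Hence $\widehat\psi\in L^2\iff\widehat g(0)=0\iff\int V\psi=0$; with $\phi\neq0\iff\psi\neq0$ this gives $\phi\in S_2L^2\setminus\{0\}\iff\psi\in L^2(\mathbf R^7)$ (consistent with the fact that in dimension $7$ the ``second kind resonance'' is an $L^2$ eigenfunction). Incidentally the same computation shows that when $\int V\psi\neq0$ the function $\psi$ decays like $|x|^{-3}$, which is exactly why the threshold space in Definition~\ref{resonance'}(3) is $W_{1/2}\setminus L^2$.

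The main obstacle will be the sharp threshold analysis at the end of Part (2), which is the only place where the value $d=7$ enters quantitatively: one must pin down precisely that $|\xi|^{-3}\in L^2_{\mathrm{loc}}$ but $|\xi|^{-4}\notin L^2_{\mathrm{loc}}$ at the origin of $\mathbf R^7$ (equivalently $|x|^{-3}\in W_{1/2}(\mathbf R^7)\setminus L^2(\mathbf R^7)$), and carry out the weighted bookkeeping showing $\beta>9$ is exactly enough to push $v\phi$ into the admissible window $L^2_{s_1}$, $s_1>\tfrac12$, of Lemma~\ref{Reisz-potential-boundedness} and to land $V\psi$ in the regularity class needed for the first--order Taylor expansion of $\widehat{V\psi}$ at the origin. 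By contrast, the algebraic steps ($T_0\phi=0\Leftrightarrow$ the integral equation, $\langle v,\phi\rangle=\int V\psi$, and the positivity characterization of $\ker T_1$) are routine once the identity $\phi=Uv\psi$ is in hand.
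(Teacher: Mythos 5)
Your proof is correct. Part (1) and the identification $S_{2}L^{2}=\{\phi\in S_{1}L^{2}:\langle v,\phi\rangle=0\}$ (via positivity of $P$, i.e. $0=\langle S_{1}P\phi,\phi\rangle=\|P\phi\|^{2}$) coincide with the paper's argument. Where you genuinely diverge is in the heart of Part (2), the equivalence $\int_{\mathbf{R}^{7}}v\phi\,{\rm d}y=0\iff\psi\in L^{2}(\mathbf{R}^{7})$. The paper stays in physical space: it subtracts the leading spatial asymptotic of the kernel, writing
\begin{equation*}
\psi=\int_{\mathbf{R}^{7}}\Big[\frac{1}{|x-y|^{3}}-\frac{1}{(1+|x|)^{3}}\Big]v(y)\phi(y)\,{\rm d}y+\frac{1}{(1+|x|)^{3}}\int_{\mathbf{R}^{7}}v(y)\phi(y)\,{\rm d}y,
\end{equation*}
shows the first term always lies in $L^{2}$ by the Riesz potential bounds of Lemma~\ref{Reisz-potential-boundedness}, and concludes from $(1+|x|)^{-3}\notin L^{2}(\mathbf{R}^{7})$ that $\psi\in L^{2}$ iff the coefficient $\int v\phi$ vanishes. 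You instead pass to the Fourier side, writing $\widehat{\psi}=-|\xi|^{-4}\widehat{V\psi}$, Taylor-expanding $\widehat{V\psi}$ at $\xi=0$ (legitimate since $\beta>9$ gives $V\psi,\ |x|V\psi\in L^{1}$), and using that $|\xi|^{-3}\in L^{2}_{\mathrm{loc}}(\mathbf{R}^{7})$ while $|\xi|^{-4}\notin L^{2}_{\mathrm{loc}}(\mathbf{R}^{7})$. The two arguments are dual manifestations of the same threshold phenomenon: yours is closer in spirit to the paper's own Lemma~\ref{stop} (which also works through Plancherel) and transfers mechanically to other dimensions, while the paper's kernel subtraction exhibits the pointwise tail $\psi(x)\sim c|x|^{-3}$ explicitly, which is exactly what motivates the space $W_{1/2}(\mathbf{R}^{7})\setminus L^{2}(\mathbf{R}^{7})$ in Definition~\ref{resonance'}. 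One cosmetic point: you normalize $P=\|v\|_{L^{2}}^{-2}v\langle v,\cdot\rangle$ so that $P$ is an honest orthogonal projection, whereas the paper writes $\|v\|_{L^{2}}^{-1}$; your normalization is the one consistent with the paper's use of $P$ as a projection, and the conclusion $T_{1}\phi=0\iff\langle v,\phi\rangle=0$ is unaffected either way.
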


\subsection{Resolvent expansion for $d=8$}
In the 8-dimensional case, by Lemma \ref{free-expansions}(4),  in $B(s, -s)$ with $s>4$, we have
\begin{equation*}\label{free-resolvent-8-re}
R_{0}(\mu^4; x, y)=G_{0}+\mu^{4}d(\mu) I+\mu^{4} G_{2}+O(\mu^{6-})
\end{equation*}
where
\begin{align*}
G_{0}(x,y)=d_{0}|x-y|^{-4};\,\,\, d(\mu)=d_{1}\ln\mu+d_{2};\,\,\, G_{2}(x,y)=d_{3}\ln(|x-y|).
\end{align*}

The definition of resonance for $d=8$ is the same as Definition \ref{resonance} by replacing the representations of $G_{j}$ in the corresponding case $d=8$.  Note that, in the case $d=8$ there are only one kind of resonance. The second kind of ``resonance" is eigenvalue.    Next, we give the expansions of $R_{V}(\mu)$ in $B(s, -s)$ with $s>4$.
\begin{theorem}\label{RV-expansions-8}
For $d=8$ and $0<|\mu|\ll1$,	assume $|V(x)|\lesssim (1+|x|)^{-\beta}$ with some $\beta>8$.  Let $s>4$.   Then in $B(s, -s)$ we have:
	
	(1) If $0$ is a regular  point of $H$, then
	\begin{equation*}\label{RV-8-0}
	R_{V}(\mu^4)=B_{8}^{0}+\mu^{4}\ln(\mu) B_{8}^{1}+\mu^{4}\varrho B_{8}^{2}+O(\mu^{6-}),
	\end{equation*}
	where $B_{8}^{j}\in B(s, -s)$ are self-adjoint operators for $j=0, 1, 2$ and $\varrho\in\mathbf{C}\setminus\mathbf{R}$.
	
	(2) If $0$ is of the first kind resonance of $H$, then
	\begin{equation*}\label{RV-8-1}
	R_{V}(\mu^4)=\frac{1}{\mu^4 d(\mu)}\varrho_{-3, 2}B_{8, 2}^{-3}+\frac{1}{\mu^4(d(\mu))^2}\varrho_{-2, 2}B_{8, 2}^{-2}+\frac{1}{\mu^2(d(\mu))^2}\varrho_{-1, 2}B_{8, 2}^{-1}+\varrho_{0, 2}B_{8, 2}^{0}+O\big((d(\mu))^{-1}\big),
	\end{equation*}
	where $B_{8, 2}^{-j}\in B(s, -s)$ are self-adjoint operators and $\varrho_{-j, 2}\in\mathbf{C}\setminus\mathbf{R}$ for $0\leq j\leq3$.
	
	(3) If $0$ is an eigenvalue of $H$, then
	\begin{equation*}\label{RV-8-2}
	R_{V}(\mu^4)=\frac{1}{\mu^{4}}B_{8, 3}^{-4}+\frac{1}{\mu^{4} d(\mu)}\varrho_{-3, 3}B_{8, 3}^{-3}+\frac{1}{\mu^2}\varrho_{-2, 3}B_{8, 3}^{-2}+\frac{1}{\mu^2 d(\mu)}\varrho_{-1, 3}B_{8, 3}^{-1}+\varrho_{0, 3}B_{8, 3}^{0}+O\big(d(\mu)^{-1}\big),
	\end{equation*}
	where $B_{8, 3}^{-4},  B_{8, 3}^{-j}\in B(s, -s)$ are self-adjoint operators and $\varrho_{-j, 3}\in\mathbf{C}\setminus\mathbf{R}$ for $0\leq j\leq 3$ .
	
\end{theorem}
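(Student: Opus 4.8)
The plan is to use the weighted resolvent identity \eqref{weighted-resolvent-identity} to reduce everything to an asymptotic expansion of $[M(\mu)]^{-1}$ in $B(0,0)$, and then to read off $R_{V}(\mu^{4})$ from the symmetric identity \eqref{symmetric-resolvent-idnetity} together with the free expansion of Lemma \ref{free-expansions}(4). First I would substitute that free expansion into $M(\mu)=U+vR_{0}(\mu^{4})v$: since the $|x-y|^{0}$-coefficients in Lemma \ref{free-expansions}(4) are constants, the operator with kernel $v(x)\,|x-y|^{0}\,v(y)$ is the rank-one operator $v\langle v,\cdot\rangle=\|v\|_{L^{2}}^{2}P$, so for $0<|\mu|\ll1$
\begin{equation*}
M(\mu)=T_{0}+\mu^{4}d(\mu)\,\|v\|_{L^{2}}^{2}\,P+\mu^{4}\,vG_{2}v+\widetilde{E}(\mu),\qquad \|\widetilde{E}(\mu)\|_{B(0,0)}=O(\mu^{6-}),
\end{equation*}
where $T_{0}=U+vG_{0}v$, $G_{2}$ is the operator with kernel $d_{3}v(x)\ln|x-y|v(y)$ and $d(\mu)=d_{1}\ln\mu+d_{2}$. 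The $B(0,0)$-boundedness and self-adjointness of $vG_{0}v$, $vG_{2}v$ and the $O(\mu^{6-})$ control of $\widetilde{E}(\mu)$ require $w(x)(1+|x|)^{s}\in L^{\infty}$ for $s>4$ --- which is exactly where $\beta>8$ enters --- together with the mapping properties of the Riesz potential $(-\Delta)^{-2}$. In the regular case, $T_{0}$ is invertible on $L^{2}$; then $M(\mu)=T_{0}\bigl(I+T_{0}^{-1}(M(\mu)-T_{0})\bigr)$ and I expand $[M(\mu)]^{-1}$ in a Neumann series in the three scales $\mu^{4}$, $\mu^{4}\ln\mu$ and $\widetilde{E}(\mu)$, keeping terms through $O(\mu^{6-})$; inserting this and the $R_{0}(\mu^{4})$-expansion into \eqref{symmetric-resolvent-idnetity} and collecting like scales gives statement (1), the coefficients being self-adjoint because $U$, $G_{j}$ and $T_{0}^{-1}$ are.

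If $0$ is not regular I run the finite Jensen--Nenciu inversion scheme built on the projections $S_{1}\geq S_{2}$ of Definition \ref{resonance}. Let $S_{1}$ be the finite-rank self-adjoint Riesz projection onto $\ker T_{0}$, so $M(\mu)+S_{1}$ is invertible for small $\mu$; the inversion lemma reduces $[M(\mu)]^{-1}$ to $B_{1}(\mu)^{-1}$ on $S_{1}L^{2}$, where $B_{1}(\mu):=S_{1}-S_{1}(M(\mu)+S_{1})^{-1}S_{1}$ and
\begin{equation*}
[M(\mu)]^{-1}=(M(\mu)+S_{1})^{-1}+(M(\mu)+S_{1})^{-1}S_{1}\,B_{1}(\mu)^{-1}\,S_{1}(M(\mu)+S_{1})^{-1}.
\end{equation*}
Using $S_{1}(T_{0}+S_{1})^{-1}=S_{1}$ and the Neumann expansion of $(M(\mu)+S_{1})^{-1}$ one finds $B_{1}(\mu)=\mu^{4}d(\mu)\|v\|_{L^{2}}^{2}T_{1}+\mu^{4}S_{1}vG_{2}vS_{1}+O(\mu^{6-})$ with $T_{1}=S_{1}PS_{1}$. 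In the first-kind resonance case $T_{1}$ is invertible on $S_{1}L^{2}$; I pull out $\mu^{4}d(\mu)$ and treat $1/d(\mu)$ as the small parameter, so $B_{1}(\mu)^{-1}=(\mu^{4}d(\mu))^{-1}\bigl(\|v\|_{L^{2}}^{-2}T_{1}^{-1}+O(1/d(\mu))\bigr)$ on $S_{1}L^{2}$; substituting this back into the last display and then into \eqref{symmetric-resolvent-idnetity}, and sorting by the resulting scales $\mu^{-4}d(\mu)^{-1}$, $\mu^{-4}d(\mu)^{-2}$, $\mu^{-2}d(\mu)^{-2}$ and $1$, produces statement (2).

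If $T_{1}$ is not invertible (the eigenvalue case), I run one more reduction with $S_{2}$ the Riesz projection onto $\ker T_{1}\subseteq S_{1}L^{2}$: the key point is that on $\ker T_{1}$ the logarithmic piece $\mu^{4}d(\mu)\|v\|_{L^{2}}^{2}T_{1}$ of $B_{1}(\mu)$ vanishes, so the leading surviving term there is the purely polynomial $\mu^{4}S_{2}vG_{2}vS_{2}=\mu^{4}T_{2}$ with no $d(\mu)$; when $T_{2}$ is invertible on $S_{2}L^{2}$ this gives the clean $\mu^{-4}$ pole, whose residue $B_{8,3}^{-4}$ is a constant multiple of the zero-energy $L^{2}$-eigenprojection of $H$, and the remaining blocks of $B_{1}(\mu)^{-1}$ together with the $R_{0}(\mu^{4})$-expansion supply the lower-order terms of statement (3). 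To know that the scheme terminates here for $d=8$ I must verify that $T_{2}=S_{2}vG_{2}vS_{2}$ is invertible on $S_{2}L^{2}$; this is the classification step, carried out exactly as in Proposition \ref{classification-7}: one shows that $\phi=Uv\psi\in S_{1}L^{2}(\mathbf{R}^{8})$ corresponds to distributional solutions of $H\psi=0$ in $W_{0}(\mathbf{R}^{8})\setminus L^{2}$, and that membership in $S_{2}L^{2}$ forces the extra decay $\psi\in L^{2}(\mathbf{R}^{8})$, so $S_{2}L^{2}$ is isomorphic to the zero-energy eigenspace of $H$ and $T_{2}$ is automatically invertible there --- which is also what justifies calling this second ``resonance'' an eigenvalue. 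Self-adjointness of every $B_{8,k}^{-j}$ and its membership in $B(s,-s)$ follow because each coefficient is built from $U$, the $G_{j}$, the $S_{j}$, the $T_{j}^{-1}$ and $R_{0}(0)$ through finitely many bounded operations, and the remainders are $O((d(\mu))^{-1})$ by propagating $\widetilde{E}(\mu)=O(\mu^{6-})$ through that finite algebra.

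The step I expect to be the main obstacle is the bookkeeping in the two successive Feshbach reductions in the presence of the logarithm $d(\mu)=d_{1}\ln\mu+d_{2}$: because $d(\mu)$ is neither a power of $\mu$ nor bounded, at each inversion one must peel off exactly the right combination of a $\mu$-power and a $d(\mu)$-power, verify that the leftover correction is genuinely small on the relevant finite-dimensional subspace, and make sure no spurious intermediate-size terms survive. The mismatch between the $\mu^{4}d(\mu)$ prefactor on the rank-one $P$-piece and the $\mu^{4}$ prefactor on $vG_{2}v$ is precisely what separates the resonance case from the eigenvalue case and what forces the mixed scales $\mu^{-4}d(\mu)^{-1}$, $\mu^{-4}d(\mu)^{-2}$, $\mu^{-2}d(\mu)^{-2}$ in statement (2) and $\mu^{-4}$, $\mu^{-4}d(\mu)^{-1}$, $\mu^{-2}$, $\mu^{-2}d(\mu)^{-1}$ in statement (3), instead of clean powers of $\mu$. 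A secondary difficulty is the termination step itself: proving the $d=8$ analogue of Proposition \ref{classification-7} requires pulling $V\psi$ back into $L^{2}(\mathbf{R}^{8})$ through $(-\Delta)^{-2}$ and carefully controlling the decay of the resulting $\psi$.
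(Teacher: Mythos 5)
Your overall route is the same as the paper's: substitute the $d=8$ free expansion into $M(\mu)=U+vR_{0}(\mu^{4})v$, treat the regular case by a Neumann series, and otherwise run the Jensen--Nenciu/Feshbach reductions keyed to $S_{1}\geq S_{2}$, with the competition between the $\mu^{4}d(\mu)P$ piece and the $\mu^{4}vG_{2}v$ piece producing exactly the mixed scales of statements (2) and (3). (The paper does not write out $d=8$ explicitly but declares it parallel to $d=6$, where precisely this scheme is carried out in Lemmas \ref{first6}--\ref{third6}.) Parts (1) and (2) of your argument, and the structure of part (3), are therefore fine.

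The genuine gap is your termination step. You correctly isolate that everything hinges on $T_{2}=S_{2}vG_{2}vS_{2}$ being invertible on $S_{2}L^{2}$, but your justification --- that the classification identifies $S_{2}L^{2}$ with the zero-energy $L^{2}$-eigenspace and hence $T_{2}$ is ``automatically invertible there'' --- does not follow. The classification of $S_{2}L^{2}$ only tells you which functions $\psi$ the elements $\phi=Uv\psi$ correspond to; it says nothing about whether the quadratic form $\langle G_{2}vf,vf\rangle$ (with $G_{2}(x,y)=d_{3}\ln|x-y|$) is non-degenerate on that finite-dimensional subspace. The paper's actual argument is the positivity computation of Lemma \ref{stop} (invoked again for $d=6$ in Lemma \ref{third6}): for $f\in\ker(S_{2}vG_{2}vS_{2})$ one uses the orthogonality $\int v f\,{\rm d}y=0$ coming from $f\in S_{2}L^{2}$ to strip the lower-order terms from $\bigl(R_{0}(\mu^{4})-G_{0}\bigr)/\mu^{4}$, passes to the limit $\mu\to 0$ via Plancherel, and obtains
\begin{equation*}
0=\langle G_{2}vf,\,vf\rangle=\lim_{\mu\to 0}\Bigl\langle \tfrac{R_{0}(\mu^{4})-G_{0}}{\mu^{4}}vf,\,vf\Bigr\rangle=c\int_{\mathbf{R}^{8}}\frac{|\widehat{vf}(\xi)|^{2}}{|\xi|^{8}}\,{\rm d}\xi ,
\end{equation*}
which forces $vf=0$ and then $f=-UvG_{0}vf=0$. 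Without this step the iteration is not known to stop, the $\mu^{-4}$ pole in statement (3) is not justified, and the identification of the second ``resonance'' with an eigenvalue is an assertion rather than a conclusion. You should replace the ``automatic'' claim with this computation (note also that the cancellation of the $\mu^{4}\ln\mu$ term in the limit above uses $Pf=0$ in an essential way, so the orthogonality conditions are not merely decorative).
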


Now, we give the classification of the spectral subspaces of $d=8$.

\begin{proposition}\label{classification-8}
For $d=8$,	let $|V(x)|\lesssim(1+|x|)^{-\beta}$ with some $\beta>8$.  The following statements hold:
	
	(1)  $\phi\in S_{1}L^{2}(\mathbf{R}^{8})\setminus\{0\}$ if and only if $\phi=Uv\psi$ where $\psi\in W_{0}(\mathbf{R}^{8})$  satisfies $H\psi=0$ in distributional sense and
	\begin{equation*}
	\psi(x)=-\int_{\mathbf{R}^{8}}\frac{d_{0}}{|x-y|^{4}}v(y)\phi(y){\rm d}y.
	\end{equation*}
	
	(2)  $\phi=Uv\psi\in S_{2}L^{2}(\mathbf{R}^{8})\setminus\{0\}$ if and only if $\psi\in L^{2}(\mathbf{R}^{8})$.
	
\end{proposition}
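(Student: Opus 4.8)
The plan is to unwind the definitions of the Riesz projections $S_1$ and $S_2$ through the operators $T_0=U+vG_0v$ and $T_1=S_1PS_1$ (Definition~\ref{resonance}), translate the kernel conditions $T_0\phi=0$, $T_1\phi=0$ into properties of the associated function $\psi:=-G_0v\phi$, and then read off the decay of $\psi$ at spatial infinity from the fact that, by Lemma~\ref{free-expansions}(4), $G_0$ is the Riesz potential $R_0(0)=(-\Delta)^{-2}$ on $\mathbf{R}^8$, with kernel $d_0|x-y|^{-4}$. For part~(1): $T_0$ is a bounded self-adjoint operator on $L^2(\mathbf{R}^8)$ (here $\beta>8$ makes $vG_0v$ compact), so $S_1L^2=\ker T_0$. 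If $\phi\in S_1L^2\setminus\{0\}$, applying $U$ to $T_0\phi=0$ gives $\phi=-UvG_0v\phi=Uv\psi$ with $\psi=-G_0v\phi$; since $v\phi=vUv\psi=Uv^2\psi=V\psi$, this says $\psi=-G_0V\psi=-R_0(0)V\psi$, i.e.\ $H\psi=0$ in the distributional sense by the Remark after Definition~\ref{resonance'}. Conversely, from $\psi$ with $H\psi=0$ (equivalently $\psi=-R_0(0)V\psi$), set $\phi=Uv\psi$; then $v\phi=V\psi$, $\psi=-G_0v\phi$, and $T_0\phi=U\phi+vG_0v\phi=v\psi-v\psi=0$, with $\phi=0\Leftrightarrow\psi=0$. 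Finally $\psi\in W_0(\mathbf{R}^8)$ follows from the mapping properties of the Riesz potential (Lemma~\ref{Reisz-potential-boundedness}): $\phi\in L^2$ and $|V|\lesssim(1+|x|)^{-\beta}$ with $\beta/2>4$ give $v\phi\in L^2_{\beta/2}$, hence $\psi=-d_0(-\Delta)^{-2}(v\phi)\in L^2_{-s}$ for every $s>0$; the reverse implication uses $\|v\psi\|_{L^2}\le\|(1+|x|)^{\beta/2}v\|_{L^\infty}\|(1+|x|)^{-\beta/2}\psi\|_{L^2}<\infty$ since $W_0(\mathbf{R}^8)\subset L^2_{-\beta/2}$.

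For part~(2): $T_1=S_1PS_1$ is self-adjoint, and for $\phi\in S_1L^2$ one has $\langle T_1\phi,\phi\rangle=\langle P\phi,\phi\rangle=\|v\|_{L^2}^{-2}|\langle v,\phi\rangle|^2\ge0$; since moreover $\langle v,\phi\rangle=0$ forces $P\phi=0$ and hence $T_1\phi=0$, we get $S_2L^2=\ker\bigl(T_1|_{S_1L^2}\bigr)=\{\phi\in S_1L^2:\langle v,\phi\rangle=0\}$. Writing $\phi=Uv\psi$ as in part~(1), $\langle v,\phi\rangle=\langle v,Uv\psi\rangle=\int_{\mathbf{R}^8}V\psi\,dx$, so the proposition reduces to the equivalence
\[
\int_{\mathbf{R}^8}V\psi\,dx=0\iff\psi\in L^2(\mathbf{R}^8)
\]
for a resonance function $\psi=-d_0\int_{\mathbf{R}^8}|x-y|^{-4}V(y)\psi(y)\,dy\in W_0(\mathbf{R}^8)$. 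Here I would use that $\beta>8$ and $\psi\in L^2_{-\epsilon}$ for all $\epsilon>0$ imply, by Cauchy--Schwarz, $\int(1+|y|)^k|V\psi|\,dy<\infty$ for $k=0,1$; expanding $|x-y|^{-4}=|x|^{-4}+O(|x|^{-5}|y|)$ on $\{|y|\le|x|/2\}$ and estimating the tail $\{|y|>|x|/2\}$ by the decay of $V\psi$ gives $\psi(x)=-d_0|x|^{-4}\int_{\mathbf{R}^8}V\psi\,dy+O(|x|^{-5})$ as $|x|\to\infty$. Since $|x|^{-4}\notin L^2$ near infinity in $\mathbf{R}^8$ (the integral $\int_{|x|>1}|x|^{-8}\,dx$ diverges) while $|x|^{-5}$ is square-integrable there, this asymptotics yields both directions at once: $\int V\psi=0$ forces $\psi=O(|x|^{-5})$ and hence $\psi\in L^2$ (local square-integrability of $\psi$ being automatic from $\psi\in W_0$), while $\psi\in L^2$ forces the leading coefficient $\int V\psi$ to vanish. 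Combined with part~(1), this is exactly the stated description of $S_2L^2$.

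The step I expect to be the main obstacle is the pointwise asymptotic analysis of the Riesz potential $G_0(V\psi)$ at spatial infinity: one must make precise that $V\psi$ decays fast enough for its zeroth and first moments to converge and for the remainder to be genuinely $O(|x|^{-5})$ uniformly, and handle the region $\{|y|>|x|/2\}$ with care, where $|x-y|^{-4}$ fails to be locally square-integrable in $\mathbf{R}^8$, so a H\"older or weak-type estimate is needed. One also needs the a priori fact that $H\psi=0$ is equivalent to $\psi=-R_0(0)V\psi$ with no additive polyharmonic term, which holds because a polyharmonic function lying in $\cap_{s>0}L^2_{-s}(\mathbf{R}^8)$ must vanish. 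Everything else is bookkeeping with self-adjointness and the explicit kernel $d_0|x-y|^{-4}$.
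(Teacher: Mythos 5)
Part (1) of your proposal follows the paper's argument essentially verbatim: unwind $T_0\phi=0$ to get $\phi=Uv\psi$ with $\psi=-G_0v\phi$, check $(1+G_0V)\psi=0$, and use Lemma~\ref{Reisz-potential-boundedness} with $v\phi\in L^2_{\beta/2}$, $\beta/2>4$, to place $\psi$ in $W_0(\mathbf{R}^8)$; the converse is the same bookkeeping. Your identification $S_2L^2=\{\phi\in S_1L^2:\langle v,\phi\rangle=0\}$ via positivity of $S_1PS_1$ is also exactly what the paper uses. (Minor quibble: $vG_0v$ is not Hilbert--Schmidt in $d=8$ since $|x-y|^{-8}$ is not locally integrable, so compactness needs a different justification than in the $d=5$ case, but this is peripheral.)

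In part (2) there is a genuine gap in the proposed execution. You want the pointwise asymptotics $\psi(x)=-d_0|x|^{-4}\int V\psi\,dy+O(|x|^{-5})$, but this is not available here: $\psi$ is a priori only in $\cap_{s>0}L^2_{-s}$, so $V\psi$ is controlled only in weighted $L^2$, and since $|x-y|^{-4}\notin L^2_{\mathrm{loc}}(\mathbf{R}^8)$ you cannot run Cauchy--Schwarz on the near-diagonal part of $\int_{|y|>|x|/2}|x-y|^{-4}|V\psi|\,dy$; indeed $\psi$ itself need not be locally bounded, so no pointwise remainder bound of the form $O(|x|^{-5})$ can hold in general. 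This is precisely the obstacle you flag, and the fix is not a H\"older or weak-type refinement of the pointwise estimate but to abandon pointwise decay altogether and prove the remainder lies in $L^2$ in the averaged sense. That is what the paper does: it subtracts $\frac{1}{1+|x|^4}$ (rather than $|x|^{-4}$, avoiding the origin), bounds the difference kernel by
\begin{equation*}
\Big|\frac{1}{|x-y|^4}-\frac{1}{1+|x|^4}\Big|\lesssim\frac{(1+|y|)^4}{(1+|x|)^4|x-y|^4}+\frac{(1+|y|)^2}{(1+|x|)^2|x-y|^4}+\frac{1+|y|}{(1+|x|)|x-y|^4},
\end{equation*}
and applies the weighted mapping properties of the Riesz potential $I_4$ from Lemma~\ref{Reisz-potential-boundedness} to each term, using $\beta>8$ so that $(1+|y|)^k v\phi\in L^2_{\beta/2-k}$ with $\beta/2-k>0$ for $k\le 4$. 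Once the remainder is known to be in $L^2$, your ``both directions at once'' logic, resting on $|x|^{-4}\notin L^2(\{|x|>1\})$ in dimension $8$, goes through exactly as in the paper. So the strategy is right, but the pointwise step as stated would fail and must be replaced by the weighted-$L^2$ argument.
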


\subsection{Resolvent expansion for $d\geq9$}
In the cases $d\geq 9$, by Lemma \ref{free-expansions}, in $B(s, -s)$ with $s>d/2$, we have
\begin{equation*}\label{free-resolvent-9}
R_{0}(\mu^4; x, y)=G_{0}+\mu^{4}G_{1}+\mu^{5}G(\mu)+\mu^{d-4} c_{0}(d)G_{2}+O(\mu^{(d-4)+}),
\end{equation*}
where
\begin{align*}
&G_{0}(x, y)
=\begin{cases}
c_{1}(d)|x-y|^{4-d}, &  d\geq 9\quad odd;\\ c_{2}(d)|x-y|^{4-d}, &  d\geq9 \quad even;
\end{cases}\\
&G_{1}(x, y)=
\begin{cases}
c_{3}(d)|x-y|^{8-d}, &  d\geq 9\quad odd;\\ c_{4}(d)|x-y|^{8-d}, &  d\geq9 \quad even;
\end{cases}\\
&G(\mu)(x,y)=
\begin{cases} \sum_{\ell=7, \ell\in 2\mathbf{N}\setminus 4\mathbf{N}}^{d-3}c_{\ell}(d)\mu^{\ell-7}|x-y|^{\ell+2-d}, & d\geq 9\quad odd;\\  \\ \sum_{\ell=7, \ell\in 2\mathbf{N}-1}^{d/2-2}\mu^{2j-7}\tilde{c}_{\ell}(d)|x-y|^{2\ell+2-d}, &  d\geq9 \quad even;
\end{cases}\\
&G_{2}(x, y)=
\begin{cases}
c_{5}(d)|x-y|^{0}, &  d\geq 9\quad odd;\\ c_{6}(d)|x-y|^{0}, &  d\geq9 \quad even;
\end{cases}
\end{align*}
where $c_{0}(d)\in\mathbf{C}\setminus\mathbf{R}$ and  $c_{j}(d), c_{\ell}(d), \tilde{c}_{\ell}(d)\in \mathbf{R}\setminus\{0\}$ for all $1\leq j\leq6$ and all  $\ell$.

Note that, for $d\geq9$, zero is either a regular  point or an eigenvalue of $H$.  Next, we give the expansions of $R_{V}(\mu)$ in $B(s, -s)$ with $s>d/2$.
\begin{theorem}\label{RV-expansions-d}
	For $d\geq9$ and $0<|\mu|\ll1$, assume $|V(x)|\lesssim (1+|x|)^{-\beta}$ with some $\beta>d$. Let $s>d/2$,   then in $B(s, -s)$ we have:
	
	(1) If $0$ is a regular  point of $H$, then
	\begin{equation*}\label{RV-d-0}
	R_{V}(\mu^4)=B_{d}^{0}+\mu^{4}B_{d}^{1}+\mu^{5}B_{d}^{2}+\kappa(d)\mu^{d-4}B_{d}^{3}+O(\mu^{d-4+}),
	\end{equation*}
	where $B_{d}^{j}\in B(s, -s)$ are self-adjoint operators for $j=0, 1, 2, 3$ and $\kappa(d)\in\mathbf{C}\setminus\mathbf{R}$.
	
	(2) If $0$ is an eigenvalue of $H$, then
	\begin{equation*}\label{RV-d-1}
	R_{V}(\mu^4)=\frac{1}{\mu^4}B_{d, 1}^{-3}+\frac{1}{\mu^3}B_{d, 1}^{-2}+\mu^{d-12}\kappa_{-1, 1}B_{d, 1}^{-1}+\kappa_{0, 1}B_{d, 1}^{0}+O(\mu^{d-8}),
	\end{equation*}
	where $B_{d, 1}^{-j}\in B(s, -s)$ are self-adjoint operators for $0\leq j\leq3$  and $\kappa_{-1, 1},  \kappa_{0, 1} \in\mathbf{C}\setminus\mathbf{R}$.
	
\end{theorem}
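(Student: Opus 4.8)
The plan is to follow the same scheme used above for $d=5,\dots,8$. One starts from the symmetric resolvent identity \eqref{symmetric-resolvent-idnetity}, which reduces everything to an expansion of $[M(\mu)]^{-1}$ in $B(0,0)$. Since $\beta>d$ one may fix $s\in(d/2,\beta/2)$, so that $v(x)(1+|x|)^{s}\in L^{\infty}$ and hence $vR_{0}(\mu^{4})v$ is bounded on $L^{2}$ and inherits, as a $B(0,0)$-expansion, the $B(s,-s)$-expansion of $R_{0}(\mu^{4})$ from Lemma \ref{free-expansions}(5)--(6). Recalling that $R_{0}(\mu^{4})=(-\Delta)^{-2}+\mu^{4}(-\Delta)^{-4}+\cdots$ up to a genuinely non-analytic block first appearing at order $\mu^{d-4}$, this gives
\begin{equation*}
M(\mu)=T_{0}+\mu^{4}vG_{1}v+\mu^{5}vG(\mu)v+c_{0}(d)\mu^{d-4}vG_{2}v+\mathcal{E}(\mu),\qquad\|\mathcal{E}(\mu)\|_{B(0,0)}=O(\mu^{(d-4)+}),
\end{equation*}
where $T_{0}=U+vG_{0}v$, $G_{0}$ has kernel $\propto|x-y|^{4-d}$ (that is, $(-\Delta)^{-2}$), $G_{1}$ has kernel $\propto|x-y|^{8-d}$ (that is, $(-\Delta)^{-4}$), $G(\mu)$ collects the finitely many higher Riesz-potential terms $\mu^{4k}(-\Delta)^{-2-2k}$ with $k\ge2$, and $vG_{2}v$ is the rank-one operator carried by the $|x-y|^{0}$-kernel (proportional to $P$), which is the first non-analytic contribution and appears only at order $\mu^{d-4}$.

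For part (1), with $0$ a regular point, $T_{0}$ is invertible on $L^{2}$, so one Neumann-expands $[M(\mu)]^{-1}=[T_{0}+(M(\mu)-T_{0})]^{-1}=T_{0}^{-1}+\mu^{4}A_{1}+\mu^{5}A_{2}+\kappa(d)\mu^{d-4}A_{3}+O(\mu^{(d-4)+})$ in $B(0,0)$, with $A_{1}=-T_{0}^{-1}(vG_{1}v)T_{0}^{-1}$ and so on, then substitutes this and the expansion of $R_{0}(\mu^{4})$ into \eqref{symmetric-resolvent-idnetity} and regroups by powers of $\mu$. The leading coefficient is $B_{d}^{0}=G_{0}-G_{0}vT_{0}^{-1}vG_{0}$, and the $B_{d}^{j}$ are self-adjoint after the usual symmetrization, exactly as in Theorems \ref{RV-expansions-5}--\ref{RV-expansions-8}.

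For part (2), with $0$ an eigenvalue, $T_{0}$ is not invertible. I would let $S_{1}$ be the Riesz projection onto $\ker T_{0}$, which is finite-rank by the Fredholm alternative (Remark \ref{properties-of-S-T}(2)), so that $T_{0}+S_{1}$ is invertible, and then invoke the abstract inversion lemma used throughout the paper (cf.\ \cite{JN}): $M(\mu)$ is invertible for $0<|\mu|\ll1$ if and only if $M_{1}(\mu):=\mu^{-4}\big(S_{1}-S_{1}(M(\mu)+S_{1})^{-1}S_{1}\big)$ is invertible on $S_{1}L^{2}$, in which case
\begin{equation*}
[M(\mu)]^{-1}=(M(\mu)+S_{1})^{-1}+\frac{1}{\mu^{4}}\,(M(\mu)+S_{1})^{-1}S_{1}\,[M_{1}(\mu)]^{-1}\,S_{1}(M(\mu)+S_{1})^{-1}.
\end{equation*}
Expanding $(M(\mu)+S_{1})^{-1}$ by a Neumann series and using $S_{1}(T_{0}+S_{1})^{-1}=(T_{0}+S_{1})^{-1}S_{1}=S_{1}$ (Remark \ref{properties-of-S-T}(3)) one finds $M_{1}(\mu)=T_{1}+O(\mu)$ with $T_{1}=S_{1}vG_{1}vS_{1}$. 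The decisive step, and the only place where the hypothesis $d\ge9$ is genuinely used, is to check that $T_{1}$ is positive definite, hence invertible, on $S_{1}L^{2}$: for $d\ge9$ the Riesz potential $(-\Delta)^{-2}$ maps the rapidly decaying $v\phi$ into $L^{2}$, so any $\phi=Uv\psi\in S_{1}L^{2}$ has $\psi=-G_{0}v\phi\in L^{2}$, whence $S_{1}L^{2}$ equals the image under $Uv$ of the $L^{2}$-kernel of $H$; and, using $v\phi=V\psi=-(-\Delta)^{2}\psi$, one computes $\langle\phi,T_{1}\phi\rangle=\langle v\phi,(-\Delta)^{-4}v\phi\rangle=\|\psi\|_{L^{2}}^{2}$ (up to the positive constant in $G_{1}$), which is $>0$ when $\psi\neq0$. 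This is exactly the statement that $(-\Delta)^{2}+V$ has no zero resonance in these dimensions, so the iteration terminates at this single step (Remark \ref{properties-of-S-T}(4)). Neumann-inverting $[M_{1}(\mu)]^{-1}$, substituting back, and regrouping, the $\mu^{-4}$ prefactor turns the head term $T_{1}^{-1}$ into the pole $\mu^{-4}B_{d,1}^{-3}$ --- up to sign the orthogonal projection onto the $L^{2}$-kernel of $H$, as it must be since $R_{V}(z)\sim -z^{-1}P_{\mathrm{ev}}$ near the eigenvalue --- the subleading corrections (from the remaining free-resolvent terms and the Neumann cross-terms) produce the coefficients $B_{d,1}^{-2}$, $\kappa_{-1,1}B_{d,1}^{-1}$, $\kappa_{0,1}B_{d,1}^{0}$ at orders $\mu^{-3}$, $\mu^{d-12}$, $\mu^{0}$, with error $O(\mu^{d-4})\cdot\mu^{-4}=O(\mu^{d-8})$. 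Sandwiching with the $B(s,-s)$-expansion of $R_{0}(\mu^{4})$ in \eqref{symmetric-resolvent-idnetity} then transfers everything to $B(s,-s)$, the self-adjointness of the $B_{d,1}^{-j}$ surviving because $S_{1}$, $T_{1}$, $T_{1}^{-1}$ and the $G_{j}$ are self-adjoint.

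The hard part will be the invertibility of $T_{1}$ on $S_{1}L^{2}$ for every $d\ge9$ --- equivalently, the absence of a zero resonance --- which relies on the sharp weighted-$L^{2}$ mapping properties of the Riesz potential $(-\Delta)^{-2}$ (cf.\ Lemma \ref{Reisz-potential-boundedness} and \cite[Lemma 2.3]{J}) together with the identification $S_{1}L^{2}\cong Uv\big(\ker_{L^{2}}H\big)$ carried out in the classification step. Everything else is bookkeeping --- controlling the non-analytic block $\mu^{5}vG(\mu)v$, verifying that the Neumann and Feshbach remainders are genuinely $O(\mu^{(d-4)+})$ in $B(0,0)$, and performing the lengthy but routine recombination with $R_{0}(\mu^{4})$ so that the final series lands in $B(s,-s)$ with $s>d/2$ --- and is carried out exactly as in the lower-dimensional cases, in the last section.
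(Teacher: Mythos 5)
Your proposal is correct and follows essentially the same route as the paper: the symmetric resolvent identity, the Neumann expansion of $[M(\mu)]^{-1}$ in the regular case, and in the eigenvalue case the Jensen--Nenciu inversion through $S_{1}$ with $M_{1}(\mu)=S_{1}vG_{1}vS_{1}+O(\mu)$, the single iteration terminating because $\ker(S_{1}vG_{1}vS_{1})=\{0\}$. Your positivity computation $\langle\phi,T_{1}\phi\rangle\propto\|(-\Delta)^{-2}v\phi\|_{L^{2}}^{2}=\|\psi\|_{L^{2}}^{2}$ is exactly the paper's argument (carried out there in Fourier variables as $\int|\widehat{v\phi}|^{2}|\xi|^{-8}\,{\rm d}\xi$, cf.\ Lemma \ref{stop}), so no genuinely different idea is involved.
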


Now, we give the classification of the spectral subspaces of $d\geq9$.

\begin{proposition}\label{classification-9}
	 For $d\geq9$,  $\phi\in S_{1}L^{2}(\mathbf{R}^{d})\setminus\{0\}$ if and only if $\phi=Uv\psi$ where $\psi\in L^{2 }(\mathbf{R}^{d})$ satisfies $H\psi=0$ in distributional sense and
	\begin{equation*}
	\psi(x)=-\int_{\mathbf{R}^{d}}\frac{c(d)}{|x-y|^{d-4}}v(y)\phi(y){\rm d}y.
	\end{equation*}
	
\end{proposition}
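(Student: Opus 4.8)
The plan is to unwind Definition \ref{resonance} and reduce the claim to a short algebraic computation, the only analytic inputs being the mapping properties of the Riesz potential $R_{0}(0)=(-\Delta)^{-2}$ and a Liouville-type uniqueness statement for the biharmonic operator. Recall that $S_{1}$ is the Riesz projection onto $\ker T_{0}$ on $L^{2}(\mathbf{R}^{d})$, where $T_{0}=U+vG_{0}v$ and $G_{0}$ is the operator with kernel $c(d)|x-y|^{4-d}$, i.e. $G_{0}=R_{0}(0)=(-\Delta)^{-2}$. Since $U$ is self-adjoint and $vG_{0}v$ is a self-adjoint compact perturbation, $S_{1}L^{2}(\mathbf{R}^{d})=\ker T_{0}$ is finite-dimensional, so it suffices to show that $\phi\in L^{2}(\mathbf{R}^{d})\setminus\{0\}$ solves $T_{0}\phi=0$ if and only if $\phi=Uv\psi$ for some $\psi\in L^{2}(\mathbf{R}^{d})\setminus\{0\}$ with $H\psi=0$ distributionally and $\psi=-G_{0}v\phi$.

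For the forward direction, given $\phi\in\ker T_{0}\setminus\{0\}$, I would set $\psi:=-G_{0}v\phi=-R_{0}(0)(v\phi)$, which is exactly $\psi(x)=-\int_{\mathbf{R}^{d}}c(d)|x-y|^{4-d}v(y)\phi(y)\,dy$. Because $|V(x)|\lesssim(1+|x|)^{-\beta}$ with $\beta>d\geq 9$, we have $v\phi\in L^{2}_{\beta/2}\subset L^{1}\cap L^{2}$ with $\beta/2>d/2\geq 4$; by the boundedness of $R_{0}(0)$ from $L^{2}_{s_{1}}$ to $L^{2}_{-s_{2}}$ for $s_{1}+s_{2}\geq 4$, $s_{1},s_{2}\geq 0$ (Lemma \ref{Reisz-potential-boundedness}), the choice $s_{2}=0$ gives $\psi\in L^{2}(\mathbf{R}^{d})$. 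The relation $T_{0}\phi=0$ reads $U\phi=-vG_{0}v\phi=v\psi$, so $\phi=Uv\psi$, and in particular $\psi\neq 0$ since $\phi\neq 0$. Finally, writing $V=Uv^{2}$ one gets $V\psi=Uv(v\psi)=Uv(U\phi)=v\phi$, while applying $(-\Delta)^{2}$ to $\psi=-R_{0}(0)(v\phi)$ and using $(-\Delta)^{2}R_{0}(0)=I$ on $v\phi\in L^{1}\cap L^{2}$ gives $(-\Delta)^{2}\psi=-v\phi$; hence $H\psi=(-\Delta)^{2}\psi+V\psi=0$ in the distributional sense. Equivalently, one may phrase this last step through the identity $(1+R_{0}(0)V)\psi=0$ noted in the Remark after Definition \ref{resonance'}.

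For the converse, suppose $\phi=Uv\psi$ with $\psi\in L^{2}(\mathbf{R}^{d})\setminus\{0\}$, $H\psi=0$, and $\psi=-G_{0}v\phi$. The decay of $v$ gives $\phi=Uv\psi\in L^{2}_{\beta/2}\subset L^{2}$. From $\psi=-G_{0}v\phi$ we obtain $vG_{0}v\phi=-v\psi$, and from $\phi=Uv\psi$ we obtain $U\phi=U^{2}v\psi=v\psi$; therefore $T_{0}\phi=U\phi+vG_{0}v\phi=v\psi-v\psi=0$, so $\phi\in\ker T_{0}=S_{1}L^{2}(\mathbf{R}^{d})$. To see $\phi\neq 0$: if $\phi=0$ then $v\psi=0$, hence $V\psi=Uv(v\psi)=0$ and $(-\Delta)^{2}\psi=H\psi-V\psi=0$; since $\psi\in L^{2}(\mathbf{R}^{d})$, its Fourier transform is supported at the origin, so $\psi$ is a polynomial, and the only polynomial in $L^{2}(\mathbf{R}^{d})$ is $0$, contradicting $\psi\neq 0$.

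The algebraic manipulations with $U,v,V$ are routine; the two points needing a little care are the mapping property of $R_{0}(0)$ placing $\psi$ in $L^{2}$ — and here the hypothesis $\beta>d$ is precisely what lets a single application of the Riesz potential land in $L^{2}$, so, unlike the lower-dimensional cases $d\leq 8$, no bootstrap over weighted spaces is required — and the justification that $R_{0}(0)$ genuinely inverts $(-\Delta)^{2}$ on $v\phi$ together with the Liouville argument, both standard once $v\phi\in L^{1}\cap L^{2}$. Accordingly I expect no substantive obstacle here; the only bookkeeping is to match the distributional reading of $H\psi=0$ with the symmetric-resolvent reformulation $T_{0}\phi=0$, which is exactly the content of the preliminary remarks.
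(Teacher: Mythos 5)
Your proof is correct and follows essentially the same route as the paper: set $\psi=-G_{0}v\phi=-I_{4}(v\phi)$, use Lemma \ref{Reisz-potential-boundedness} with $v\phi\in L^{2}_{\beta/2}$, $\beta/2>4$, to land in $L^{2}$, and verify $H\psi=0$ algebraically. The paper's written proof for $d\geq 9$ only records the forward direction; your converse (including the Liouville step ruling out $\phi=0$) matches how the paper argues the analogous equivalence in dimensions $6$ and $8$, so there is nothing substantively different to flag.
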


\section{Kato-Jensen type decay estimates}\label{Kato-Jensen-estimate}
In \cite{FSY}, Feng, Soffer and Yao gave the Kato-Jensen type decay estimates for $e^{itH}P_{ac}$ with the assumption that zero is a regular point of $H$ with $d=3$ and $d\geq5$. For $d\geq5$ and $s>\beta(d)>0$ with suitable constant $\beta(d)$, the results in \cite{FSY} tells that
\begin{equation*}
\|e^{itH}P_{ac}\|_{B(s, -s)}\lesssim (1+|t|)^{-d/4}, \,\, t\in\mathbf{R}.
\end{equation*}
In this section, we show the Kato-Jensen type decay estimates for $e^{itH}$ with the existence of zero-resonance with $d\geq5$.  By the spectral represent theorem,
\begin{equation}\label{spectral}
e^{itH}P_{ac}=\int_{0}^{\infty}e^{it\lambda^4}4\lambda^3{\rm d}E(\lambda).
\end{equation}

For the spectral density ${\rm d}E(\lambda)$, from \cite[Theorem 2.23] {FSY}, for $|V(x)|\lesssim (1+|x|)^{-\beta(d)-}$ and $s>k+1/2$ with $k=0, 1, 2$, $\beta(d)$ is given constant as following depends on dimension $d$,  we know that the limiting absorption principle hold for $R_{V}(z)$, i.e. in $B(s, -s)$ with $s>k+1/2$, the limits exist
\begin{equation*}
R_{v}(\lambda\pm i0)=\lim_{\epsilon\downarrow 0} R_{V}(\lambda\pm i\epsilon).
\end{equation*}
Furthermore, for high energy we have
\begin{equation}\label{highenergy}
\Big\|\frac{{\rm d}^{k+1}}{{\rm d}\lambda^{k+1}}E(\lambda)\Big\|_{B(s, -s)}=O(\lambda^{-(3+3k)}), \,\,\lambda\rightarrow\infty.
\end{equation}
Hence, by Stone's formula and Theorem \ref{RV-expansions-5} --  \ref{RV-expansions-d}, for lower energy part we have:
\begin{proposition}
	For $H=(-\Delta)^2+V$, $dE(\lambda)$ be the spectral density of $H$. For $0<\lambda\ll1$, we have:
	
(1)	For $d=5$, assume $|V(x)|\lesssim (1+|x|)^{-\beta}$ with some $\beta>11$.  Let $s>5+1/2$,   then in $B(s, -s)$:
	
	i) If $0$ is of the first kind resonance of $H$, we have
	\begin{equation*}\label{E-5-1}
	\pi {\rm d}E(\lambda)=\lambda^{-1}{\rm Im}(\alpha_{-1, 1}) B_{5, 1}^{-1}+{\rm Im}(\alpha_{0, 1})B_{5, 1}^{0}+\lambda{\rm Im}(\alpha_{1, 1})B_{5, 1}^{1}+O(\lambda^2) .
	\end{equation*}
	
	ii) If $0$ is of the second  kind resonance or an eigenvalue of $H$ with  $k=2, 3$ respectively,  we have
	\begin{equation*}\label{E-5-2}
	\pi {\rm d}E(\lambda)=\lambda^{-3}{\rm Im}(\alpha_{-3, k})B_{5, k}^{-3}+\lambda^{-2}{\rm Im}(\alpha_{-2, k})B_{5, k}^{-2}+\lambda^{-1}{\rm Im}(\alpha_{-1, k})B_{5, k}^{-1}+{\rm Im}(\alpha_{0, k})B_{5, k}^{0}+O(\lambda).
	\end{equation*}
	
(2)	For $d=6$, assume $|V(x)|\lesssim (1+|x|)^{-\beta}$ with some $\beta>14$.  Let $s>7$,   then in $B(s, -s)$:
	
	i) If $0$ is of the first kind resonance of $H$, we have
	\begin{equation*}\label{E-6-1}
   \pi {\rm d}E(\lambda)=\frac{1}{\lambda^2}{\rm Im}(\rho_{-2, 1})B_{6, 1}^{-2}+\ln(\lambda){\rm Im}(\rho_{-1, 1})B_{6, 1}^{-1}+{\rm Im}(\rho_{0, 1})B_{6, 1}^{0}+O(\lambda^{2-}),
	\end{equation*}
	
	ii) If $0$ is of the second kind resonance or an eigenvalue of $H$ with $k=2, 3$ respectively, we have
	\begin{equation*}\label{E-6-2}
	\begin{split}
\pi {\rm d}E(\lambda)=&\frac{1}{\lambda^{4}\Big([c_{2}\ln\lambda-{\rm Im}(c_{3})]^2+(c_{2}\pi)^2\Big)}{\rm Im}(\rho_{-4, k})B_{6, k}^{-4}+\frac{1}{\lambda^{4}(\ln\lambda)^{2}}{\rm Im}(\rho_{-3, k})B_{6, k}^{-3}\\
&+\frac{1}{\lambda^2}{\rm Im}(\rho_{-2, k})B_{6, k}^{-2}+\frac{1}{\lambda^2\Big([c_{2}\ln\lambda-{\rm Im}(c_{3})]^2
	+(c_{2}\pi)^2\Big)}{\rm Im}(\rho_{-1, k})B_{6, k}^{-1}\\
&+{\rm Im}(\rho_{0, k})B_{6, k}^{0}+O\big((\ln\lambda)^{-1}\big).
\end{split}
	\end{equation*}

(3)	For $d=7$, assume $|V(x)|\lesssim (1+|x|)^{-\beta}$ with some $\beta >9$.  Let $s>4+1/2$.  If $0$ is a resonance or  an eigenvalue of $H$ with $k=1, 2$ respectively,   then in $B(s, -s)$ we have
	\begin{equation*}\label{E-7-1}
	\pi {\rm d}E(\lambda)=\lambda^{-3}{\rm Im}(\beta_{-3, k})B_{7, k}^{-3}+\lambda^{-2}{\rm Im}(\beta_{-2, k})B_{7, k}^{-2}
	+\lambda^{-1}{\rm Im}(\beta_{-1, k})B_{7, k}^{-1}+{\rm Im}(\beta_{0, k})B_{7, k}^{0}+O(\lambda).
	\end{equation*}
	
(4)	For $d=8$, assume $|V(x)|\lesssim (1+|x|)^{-\beta}$ with some $\beta>8$.  Let $s>4$.    If $0$ is a resonance or an eigenvalue of $H$ with $k=1, 2$ respectively,  then in $B(s, -s)$ we have
	\begin{equation*}\label{E-8-1}
	\begin{split}
	\pi {\rm d}E(\lambda)=&\frac{1}{\lambda^4 \Big([d_{1}\ln\lambda-{\rm Im}(d_{2})]^2+(d_{1}\pi)^2\Big)}{\rm Im}(\varrho_{-3, k})B_{8, k}^{-3}+\frac{1}{\lambda^4(\ln\lambda)^2}{\rm Im}(\varrho_{-2, k})B_{8, k}^{-2}\\
	&+\frac{1}{\lambda^2(\ln\lambda)^2}{\rm Im}(\varrho_{-1, k})B_{8, k}^{-1}+{\rm Im}(\varrho_{0, k})B_{8, k}^{0}+O\big((\ln\lambda)^{-1}\big).
	\end{split}
	\end{equation*}
		
(5) 	For $d\geq9$, assume $|V(x)|\lesssim (1+|x|)^{-\beta}$ with some $\beta>d$.  Let $s>d/2$.   If $0$ is an eigenvalue of $H$,  then in $B(s, -s)$ we have
	\begin{equation*}\label{E-9-1}
	\pi {\rm d}E(\lambda)=\lambda^{d-12}{\rm Im}(\kappa_{-1, 1})B_{d, 1}^{-1}+{\rm Im}(\kappa_{0, 1})B_{d, 1}^{0}+O(\lambda^{d-8}).
	\end{equation*}
	
\end{proposition}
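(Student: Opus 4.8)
The plan is to obtain the formulas for $\mathrm{d}E(\lambda)$ directly from the low-energy resolvent expansions of Section \ref{perturbed-resolvent} by Stone's formula. Recall that if $E_H$ is the spectral resolution of $H$, then for $\tau>0$ one has, in $B(s,-s)$,
\begin{equation*}
\frac{\mathrm{d}E_H}{\mathrm{d}\tau}(\tau)=\frac{1}{2\pi i}\bigl[R_V(\tau+i0)-R_V(\tau-i0)\bigr],
\end{equation*}
the boundary values $R_V(\tau\pm i0)=\lim_{\epsilon\downarrow0}R_V(\tau\pm i\epsilon)$ existing in $B(s,-s)$ by the limiting absorption principle recalled above (from \cite{FSY}). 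Since $H$ is self-adjoint, $R_V(\bar z)=R_V(z)^{*}$, hence $R_V(\tau-i0)=R_V(\tau+i0)^{*}$ and the jump equals $\tfrac1\pi\,\mathrm{Im}\,R_V(\tau+i0)$ with $\mathrm{Im}\,A:=\tfrac{1}{2i}(A-A^{*})$. Setting $\tau=\lambda^{4}$ and using that $z=\lambda^{4}+i0$ corresponds, under $\mu=z^{1/4}$, to the boundary ray $\mu=\lambda>0$, this becomes $\pi\,\mathrm{d}E(\lambda)=\mathrm{Im}\,R_V(\lambda^{4}+i0)$, which is precisely the left-hand side of the Proposition (with the differential suppressed, as in \eqref{spectral}).

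First I would observe that the asymptotic expansions in Theorems \ref{RV-expansions-5}--\ref{RV-expansions-d} are valid up to the boundary ray $\{\mu=\lambda>0,\ 0<\lambda\ll1\}$: the free-resolvent expansions in Lemma \ref{free-expansions} are already the $(+i0)$-boundary expansions — this is exactly why their coefficients $a_{1},a_{2},c_{1},c_{3},b_{1},d_{2},\dots$ are genuinely non-real — and the symmetric resolvent identity \eqref{symmetric-resolvent-idnetity} together with the iteration/Neumann-series scheme of Section \ref{perturbed-resolvent} propagates this to $R_V$. Consequently $R_V(\lambda^{4}+i0)$ is obtained by putting $\mu=\lambda$ in the relevant expansion, and then I would apply $\mathrm{Im}(\cdot)$ term by term. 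The structural point that makes this clean is that every leading operator $B_{d,k}^{-j}$ is \emph{self-adjoint} while the attached factor is a \emph{scalar} function of $\mu$, so $\mathrm{Im}\bigl(c(\mu)\,B\bigr)=\mathrm{Im}\bigl(c(\mu)\bigr)\,B$. For $\mu=\lambda>0$ a factor $\alpha\lambda^{-\ell}$ contributes $\lambda^{-\ell}\mathrm{Im}(\alpha)$, while a factor that is a real multiple of a power of $\lambda$ (e.g. the $\mu^{-4}$ leading singularities in the eigenvalue cases) contributes $0$. Feeding Theorems \ref{RV-expansions-5}, \ref{RV-expansions-7} and \ref{RV-expansions-d} into this yields at once the formulas for $d=5,7$ and $d\ge9$; and since $\|\mathrm{Im}\,A\|_{B(s,-s)}\le\|A\|_{B(s,-s)}$, an $O(\mu^{m})$ remainder in the resolvent expansion produces an $O(\lambda^{m})$ remainder in $\mathrm{d}E(\lambda)$.

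The delicate cases are $d=6$ and $d=8$, where the expansions of $R_V(\mu^{4})$ involve the scalar factors $c(\mu)=c_{2}\ln\mu+c_{3}$, $d(\mu)=d_{1}\ln\mu+d_{2}$ (with $c_{2},d_{1}\in\mathbf{R}\setminus\{0\}$, $c_{3},d_{2}\in\mathbf{C}\setminus\mathbf{R}$) and their reciprocals and squares. Here it is cleanest to compute the jump directly as $\tfrac{1}{2i}\bigl(R_V(\mu^{4})|_{\mu=\lambda}-R_V(\mu^{4})|_{\mu=i\lambda}\bigr)$, since $i\lambda$ is the first-quadrant fourth root attached to $\lambda^{4}-i0$ and $\ln(i\lambda)=\ln\lambda+i\pi/2$ is exactly what supplies the extra imaginary contribution. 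Evaluating this jump on the factors $\lambda^{-4}c(\lambda)^{-1}$, $\lambda^{-4}c(\lambda)^{-2}$, $\lambda^{-2}c(\lambda)^{-1}$ (and the $d$-analogues) using $\mathrm{Im}(c(\lambda)^{-1})=-\mathrm{Im}(c(\lambda))/|c(\lambda)|^{2}$, one obtains the stated denominators $\bigl(c_{2}\ln\lambda-\mathrm{Im}(c_{3})\bigr)^{2}+(c_{2}\pi)^{2}$ and $\bigl(d_{1}\ln\lambda-\mathrm{Im}(d_{2})\bigr)^{2}+(d_{1}\pi)^{2}$, the $c_{2}\pi$ and $d_{1}\pi$ coming from the logarithmic branch crossing the spectrum. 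Because $c(\lambda)\sim c_{2}\ln\lambda\to\infty$ as $\lambda\downarrow0$, the remainders $O(c(\mu)^{-1})$, $O(d(\mu)^{-1})$ become $O((\ln\lambda)^{-1})$, and the distinction between the $(\ln\lambda)^{-1}$ and $(\ln\lambda)^{-2}$ terms falls out of $|c(\lambda)|\sim|c_{2}\ln\lambda|$.

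The main obstacle is precisely this last step — keeping exact track of the logarithmic singularities in $d=6,8$ so that the right powers of $\ln\lambda$ and the explicit denominators appear — together with the one conceptual point underlying everything: verifying that the expansions of Section \ref{perturbed-resolvent} genuinely are boundary-value expansions at $z=\lambda^{4}+i0$, so that $\mathrm{Im}(\cdot)$ may be applied termwise (using self-adjointness of the $B_{d,k}^{-j}$) without degrading the error estimates. Everything else is routine substitution into Stone's formula.
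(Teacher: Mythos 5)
Your proposal is correct and follows exactly the route the paper intends: the paper gives no details beyond ``by Stone's formula and Theorems \ref{RV-expansions-5}--\ref{RV-expansions-d}'', and you supply precisely the standard ingredients (the jump $\frac{1}{2\pi i}[R_V(\lambda^4+i0)-R_V(\lambda^4-i0)]=\frac1\pi\,\mathrm{Im}\,R_V(\lambda^4+i0)$, self-adjointness of the $B_{d,k}^{-j}$ so that $\mathrm{Im}$ acts only on the scalar coefficients, and the $\ln(i\lambda)=\ln\lambda+i\pi/2$ branch contribution producing the explicit denominators in $d=6,8$). No discrepancy with the paper's argument.
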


With the help of the properties of $dE(\lambda)$, following the same argument for $e^{itH}P_{ac}$ as in \cite{FSY} for the regular cases, then we show the proof of Theorem \ref{eitH}.

\begin{proof}[\bf Proof of Theorem \ref{eitH}]
	Let $\chi(\lambda)$ be a smooth cutoff function with support in $0<\lambda<1$. Then
	\begin{equation*}
	\begin{split}
	e^{itH}P_{ac}=&4\int_{0}^{\infty}e^{it\lambda^4}\lambda^3 E'(\lambda){\rm d\lambda}\\
	=&4\int_{0}^{\infty}e^{it\lambda^4}\lambda^3 \chi(\lambda)E'(\lambda){\rm d\lambda}+4\int_{0}^{\infty}e^{it\lambda^4}[1-\chi(\lambda)]\lambda^3 E'(\lambda){\rm d\lambda}\\
	:=I+II.
	\end{split}
	\end{equation*}

For term $II$, note that the integral is supported in $[1, \infty)$. By \eqref{highenergy} we know, it is actually the Fourier transform of the $L^{1}(\mathbf{R})$ integrable function $[1-\chi(\lambda)]\lambda^3 E'(\lambda)$. Thus, by the Riemann-Lebesgue lemma, we know the contribution of term $II$ is $|t|^{-k}$ for any large $k>0$. 	

For term $I$, we have:  for $f(x)\in C_{0}^{\infty}(\mathbf{R})$,
\begin{equation}
\int_{0}^{\infty}e^{i\lambda x}f(x)x^{\tau}{\rm dx}\sim \sum_{j=0}\theta_{j}\lambda^{-j-1-\tau}
, \,\, {\rm Re(\tau)>-1},
\end{equation}
where $\theta_{j}=i^{j+\tau+1}\frac{j!}{\Gamma(j+1+\tau)}f^{(j)}(0)$. See Stein's book \cite[P. 355]{Stein}.

On the other hand, for $d=6, 8$,  we need to treat such term
$$\int_{0}^{\infty}\frac{e^{it\lambda}}{\lambda[(\ln\lambda-a)^2+\pi^2]}{\rm d\lambda}, \,\, a\neq 0.$$
However, the integral is $O(1/\ln t)$ as $t\rightarrow\infty$. See \cite{J1}.
\end{proof}

\section{Proof of asymptotic expansions and identification of resonance spaces}\label{proof}

By the symmetric resolvent identity \eqref{symmetric-resolvent-idnetity}, we need to derive the asymptotic expansions of $\big[M(\mu)\big]^{-1}$ in $L^{2}(\mathbf{R}^{d})$ for $\mu$ near zero. In this section, we show the processes of deriving the expansion of $\big[M(\mu)\big]^{-1}$ and identify the resonance subspaces case by case.   Here we give the needed basic lemmas in the following.
\begin{lemma}\label{Feshbach-formula}
	Let $A$ be a closed operator and $S$ a projection. Suppose $A+S$ has a bounded inverse, then $A$ has a bounded inverse if and only if
	\begin{equation}
	B\equiv S-S(A+S)^{-1}S
	\end{equation}
	has a bounded inverse in $S\mathcal{H}$. Furthermore,
	\begin{equation}
	A^{-1}=(A+S)^{-1}+(A+S)^{-1}SB^{-1}S(A+S)^{-1}.
	\end{equation}
\end{lemma}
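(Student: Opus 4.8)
The plan is to reduce the whole statement to one structural observation about a \emph{bounded} operator, and then verify the formula by a direct computation. Throughout, write $C=(A+S)^{-1}$, which exists and is bounded by hypothesis; since $S$ is bounded, $\operatorname{dom}(A+S)=\operatorname{dom}(A)$, so $C$ maps $\mathcal H$ into $\operatorname{dom}(A)$, and products such as $AC$ make sense on all of $\mathcal H$. First I would record the elementary identity $AC=(A+S)C-SC=I-SC$, which exhibits $K:=AC$ as an everywhere-defined bounded operator. Since $A=K(A+S)$ and $A+S$ is boundedly invertible, $A$ has a bounded inverse if and only if $K=I-SC$ does, and in that case $A^{-1}=CK^{-1}$. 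Thus the lemma becomes a statement about invertibility of the bounded operator $I-SC$.

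The heart of the argument is to view $I-SC$ through the topological direct sum $\mathcal H=S\mathcal H\oplus(I-S)\mathcal H$ (legitimate because $S$ is a bounded idempotent). Computing the four block entries using $S^{2}=S$ gives $(I-S)(I-SC)S=0$, $(I-S)(I-SC)(I-S)=I-S$, $S(I-SC)S=S-SCS=B$, and $S(I-SC)(I-S)=:R$ for some bounded $R\colon(I-S)\mathcal H\to S\mathcal H$. Hence, in block form, $I-SC=\bigl(\begin{smallmatrix}B&R\\0&I\end{smallmatrix}\bigr)$ is upper triangular with the identity of $(I-S)\mathcal H$ in the lower-right corner, so it is invertible on $\mathcal H$ if and only if $B$ is invertible on $S\mathcal H$. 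Combined with the first step this proves the asserted equivalence, and — pleasantly — it disposes of both implications at once, with no need to treat the two directions separately.

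For the explicit inverse, assuming $B$ invertible on $S\mathcal H$, I would write $B^{-1}$ for its inverse extended by zero off $S\mathcal H$, so that $B^{-1}=SB^{-1}=B^{-1}S$ and $BB^{-1}=B^{-1}B=S$, and then simply check that $\Gamma:=C+CSB^{-1}SC$ is a two-sided inverse of $A$. Using $AC=I-SC$, the identity $(I-SC)S=S-SCS=B$, and $BB^{-1}=S$, one gets $A\Gamma=(I-SC)+BB^{-1}SC=(I-SC)+SC=I$; symmetrically, using $CA=I-CS$ on $\operatorname{dom}(A)$, $S(I-CS)=B$, and $B^{-1}B=S$, one gets $\Gamma A=(I-CS)+CSB^{-1}B=(I-CS)+CS=I$ on $\operatorname{dom}(A)$. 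Since $\Gamma$ is bounded, this yields $A^{-1}=\Gamma=(A+S)^{-1}+(A+S)^{-1}SB^{-1}S(A+S)^{-1}$, which is the claimed formula.

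The computations are routine; the only points demanding care are that $A$ is merely a closed (possibly unbounded) operator, so domains must be respected at every step — every product involving $A$ acts on $\operatorname{dom}(A)$, which is harmless precisely because $(A+S)^{-1}$ takes values in $\operatorname{dom}(A)$ — and that $S$ is only a bounded idempotent, so $(I-S)\mathcal H$ is its algebraic complement and the block-matrix bookkeeping of the second step is performed in the topological direct sum $\mathcal H=S\mathcal H\oplus(I-S)\mathcal H$. I do not anticipate a genuine obstacle here; the entire conceptual content sits in the triangular observation, which is exactly what makes the ``if and only if'' transparent and yields the formula essentially for free.
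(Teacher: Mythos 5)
Your proof is correct. Note that the paper states this lemma without giving any proof at all (it is the standard Feshbach/Schur--complement inversion formula, essentially Lemma 2.1 of Jensen--Nenciu \cite{JN}), so there is no in-paper argument to compare against; the reference proves it by directly verifying the two identities $A\Gamma=I$ and $\Gamma A=I$ (as you do in your third step) and then checking the converse implication separately. Your reduction to the bounded operator $K=I-SC=AC$ and the observation that $K$ is block upper triangular with respect to $\mathcal H=S\mathcal H\oplus(I-S)\mathcal H$, with identity in the lower-right corner, is a genuinely cleaner route to the ``if and only if'': it yields both directions at once, whereas the direct-verification approach naturally gives only the ``if'' direction and must argue the ``only if'' by a separate computation (e.g.\ exhibiting $S+SA^{-1}S$ as the inverse of $B$). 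The only point worth pinning down explicitly in the converse half of your triangular argument is why the algebraic inverse of $B$ on $S\mathcal H$ is bounded; this follows either from the open mapping theorem on the closed subspace $S\mathcal H=\ker(I-S)$, or more directly by reading off $B^{-1}=SK^{-1}S|_{S\mathcal H}$ from the block form of $K^{-1}$. Your domain bookkeeping for the closed operator $A$ is handled correctly throughout.
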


\begin{lemma}\label{inverse formula}
	(\cite[Proposition 1]{JN2}) Let $F\subset\mathbf{C}$ have zero as an accumulation point. Let $T(z), z\in F$ be a family of bounded operators of the form
	\begin{equation*}
	T(z)=T_{0}+zT_{1}(z)
	\end{equation*}
	with $T_{1}(z)$ uniformly bounded as $z\rightarrow0$. Suppose $0$ is an isolated point of the spectrum of $T_{0}$, and let $S$ be the corresponding Riesz projection. If $T_{0}S=0$, then for sufficient small $z\in F$ the operator $\widetilde T(z):S\mathcal{H}\rightarrow S\mathcal{H}$ defined by
	\begin{equation}\label{equ:61}
	\widetilde{T}(z)=\frac{1}{z}(S-S(T(z)+S)^{-1}S)=\sum_{j=0}^{\infty}(-1)^{j}z^{j}S[T_{1}(z)(T_{0}+S)^{-1}]^{j+1}S
	\end{equation}
	is uniformly bounded as $z\rightarrow0$. The operator $T(z)$ has a bounded inverse in $\mathcal{H}$ if and only if $\widetilde{T}(z)$ has a bounded inverse in $S\mathcal{H}$, and in this case
	\begin{equation}\label{equ:62}
	T(z)^{-1}=(T(z)+S)^{-1}+\frac{1}{z}(T(z)+S)^{-1}S\widetilde{T}(z)^{-1}S(T(z)+S)^{-1}.
	\end{equation}
\end{lemma}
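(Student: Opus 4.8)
The plan is to deduce this from the Feshbach--Schur complement formula of Lemma~\ref{Feshbach-formula}, applied with the closed operator $A=T(z)$ and the projection $S$. The essential preliminary step is to check that $T(z)+S$ has a bounded inverse for all sufficiently small $z\in F$, with $\|(T(z)+S)^{-1}\|$ uniformly bounded as $z\to 0$; once this is in hand, Lemma~\ref{Feshbach-formula} immediately gives that $T(z)$ is invertible on $\mathcal H$ if and only if $B(z):=S-S(T(z)+S)^{-1}S$ is invertible on $S\mathcal H$, together with
\begin{equation*}
T(z)^{-1}=(T(z)+S)^{-1}+(T(z)+S)^{-1}S\,B(z)^{-1}\,S(T(z)+S)^{-1}.
\end{equation*}
Everything then reduces to identifying $B(z)$ with $z\widetilde T(z)$.

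First I would show $T_0+S$ is boundedly invertible. Since $0$ is an isolated point of $\sigma(T_0)$ and $S$ is the associated Riesz projection, $S$ commutes with $T_0$ and $\sigma(T_0|_{(I-S)\mathcal H})$ is bounded away from $0$, so $T_0$ is invertible on $(I-S)\mathcal H$. The hypothesis $T_0S=0$ (which rules out a nilpotent part on the range of $S$) gives $T_0|_{S\mathcal H}=0$, hence $(T_0+S)|_{S\mathcal H}=I$; combining the two invariant subspaces shows $T_0+S$ is invertible on $\mathcal H$. Moreover $(T_0+S)S=T_0S+S=S$, so $(T_0+S)^{-1}S=S$, and likewise $S(T_0+S)^{-1}=S$; these identities are the ones that will collapse the Neumann series below. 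Writing $T(z)+S=(T_0+S)\bigl(I+z(T_0+S)^{-1}T_1(z)\bigr)$ and using that $\|(T_0+S)^{-1}T_1(z)\|$ is uniformly bounded, a Neumann series shows $T(z)+S$ is invertible for $|z|$ small with uniformly bounded inverse, and
\begin{equation*}
(T(z)+S)^{-1}=\sum_{k=0}^{\infty}(-1)^k z^k\bigl[(T_0+S)^{-1}T_1(z)\bigr]^{k}(T_0+S)^{-1}.
\end{equation*}

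Next I would compute $B(z)$. Sandwiching the last series between two copies of $S$ and repeatedly using $S(T_0+S)^{-1}=S$ and $(T_0+S)^{-1}S=S$, the $k=0$ term contributes exactly $S$ and each $k\ge 1$ term becomes $(-1)^kz^kS[T_1(z)(T_0+S)^{-1}]^kS$, so
\begin{equation*}
S(T(z)+S)^{-1}S=S+\sum_{k=1}^{\infty}(-1)^kz^k\,S\bigl[T_1(z)(T_0+S)^{-1}\bigr]^{k}S.
\end{equation*}
Hence $B(z)=S-S(T(z)+S)^{-1}S=-\sum_{k\ge1}(-1)^kz^kS[T_1(z)(T_0+S)^{-1}]^kS$, and re-indexing $k=j+1$ factors out a $z$ to give $B(z)=z\widetilde T(z)$ with $\widetilde T(z)=\sum_{j=0}^{\infty}(-1)^jz^jS[T_1(z)(T_0+S)^{-1}]^{j+1}S$, which is uniformly bounded as $z\to 0$ by the same geometric-series estimate. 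This simultaneously shows the two displayed expressions for $\widetilde T(z)$ agree. For $z\neq 0$, $B(z)$ is then invertible on $S\mathcal H$ iff $\widetilde T(z)$ is, with $B(z)^{-1}=z^{-1}\widetilde T(z)^{-1}$, and substituting into the Feshbach identity above yields \eqref{equ:62}.

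The routine parts -- the Neumann estimates and the re-indexing bookkeeping -- carry no real difficulty. The one place that genuinely uses the hypotheses rather than soft functional analysis is the reduction via $T_0S=0$: this is what makes $(T_0+S)^{-1}S=S$ hold \emph{exactly}, which is precisely what telescopes $S(T(z)+S)^{-1}S$ into $S$ plus an $O(z)$ remainder and thereby produces the factor $z$ in $B(z)=z\widetilde T(z)$. So I would regard verifying the invertibility and algebra of $T_0+S$ on the two invariant subspaces $S\mathcal H$ and $(I-S)\mathcal H$ as the heart of the argument, with everything else being formal manipulation.
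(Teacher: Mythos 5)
Your proof is correct and is essentially the canonical argument: the paper itself gives no proof of this lemma, deferring entirely to \cite[Proposition 1]{JN2}, and your derivation --- invertibility of $T_0+S$ on the two spectral subspaces using $T_0S=0$, the exact identities $(T_0+S)^{-1}S=S(T_0+S)^{-1}=S$, the Neumann expansion of $(T(z)+S)^{-1}$ to identify $S-S(T(z)+S)^{-1}S$ with $z\widetilde T(z)$, and the final appeal to Lemma \ref{Feshbach-formula} --- is precisely the Jensen--Nenciu proof. No gaps; your identification of where $T_0S=0$ is genuinely used (to make the $k=0$ term collapse to $S$ exactly, producing the overall factor of $z$) is the right emphasis.
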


\begin{lemma}\label{Reisz-potential-boundedness}
	(\cite[Lemma 2.3]{J}) Denote the Riesz potential $I_{\alpha}=(-\Delta)^{-\alpha/2}$ on $\mathbf{R}^{d}$ with $0<\alpha<d$.
	\begin{enumerate}
		\item If  $0<\alpha<d/2$, $s, s'\geq0$ and $s+s'\geq\alpha$, then
		$I_{\alpha}\in B(s,\,\, -s').$
		\item If  $d/2\leq\alpha<d$, $s, s'>\alpha-d/2$ and $s+s'\geq\alpha$, then
		$I_{\alpha}\in B(s,\,\, -s').$
	\end{enumerate}
\end{lemma}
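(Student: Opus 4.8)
The plan is to realize $I_{\alpha}$ as convolution with an explicit homogeneous kernel, reduce the assertion $I_{\alpha}\in B(s,-s')$ to the $L^{2}$-boundedness of a single positive integral operator, and then estimate that operator by a near-diagonal/off-diagonal splitting together with a power-weighted Schur test whose admissible exponent window turns out to be nonempty precisely under the stated hypotheses.

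First I would use that for $0<\alpha<d$ the Riesz potential $I_{\alpha}=(-\Delta)^{-\alpha/2}$ is convolution with the positive, locally integrable kernel $c_{d,\alpha}|x|^{\alpha-d}$ (with $c_{d,\alpha}>0$). Writing $\langle x\rangle=(1+|x|^{2})^{1/2}$, the statement $I_{\alpha}\in B(s,-s')$ is equivalent to the boundedness on $L^{2}(\mathbf{R}^{d})$ of the operator $T$ with nonnegative kernel
\[
T(x,y)=c_{d,\alpha}\,\langle x\rangle^{-s'}|x-y|^{\alpha-d}\langle y\rangle^{-s}.
\]
I would split $T=T_{\mathrm{near}}+T_{\mathrm{far}}$ by inserting the cutoffs $\mathbf{1}_{|x-y|\le1}$ and $\mathbf{1}_{|x-y|>1}$. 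For $T_{\mathrm{near}}$ one uses $s,s'\ge0$ (valid in both cases of the lemma) to bound the weights by $1$, so the kernel is dominated by $|x-y|^{\alpha-d}\mathbf{1}_{|x-y|\le1}$, which lies in $L^{1}(\mathbf{R}^{d})$ exactly because $\alpha>0$; Young's inequality $L^{2}\ast L^{1}\hookrightarrow L^{2}$ then handles $T_{\mathrm{near}}$ with no further restriction.

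For $T_{\mathrm{far}}$ I would run a Schur test with the radial weight $h(x)=\langle x\rangle^{-\gamma}$: it suffices to produce one $\gamma\in\mathbf{R}$ such that
\[
\langle x\rangle^{-s'}\!\!\int_{|x-y|>1}\!\!|x-y|^{\alpha-d}\langle y\rangle^{-(s+\gamma)}\,\mathrm{d}y\lesssim\langle x\rangle^{-\gamma},\qquad
\langle y\rangle^{-s}\!\!\int_{|x-y|>1}\!\!|x-y|^{\alpha-d}\langle x\rangle^{-(s'+\gamma)}\,\mathrm{d}x\lesssim\langle y\rangle^{-\gamma}.
\]
Both inner integrals are controlled by the elementary bound $\int_{|x-y|>1}|x-y|^{\alpha-d}\langle y\rangle^{-\beta}\,\mathrm{d}y\lesssim\langle x\rangle^{\alpha-\min(\beta,d)}$, valid for $\beta>\alpha$ and proved by integrating separately over $\{|y|<|x|/2\}$, $\{|y|\sim|x|\}$ and $\{|y|>2|x|\}$ (a harmless logarithm appears when $\beta=d$). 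Feeding this back in, the two inequalities reduce to the single numerical condition $s+s'\ge\alpha$ once $\gamma$ is taken with $s+\gamma,s'+\gamma>\alpha$; tracking which of $s+\gamma$, $s'+\gamma$ fall below or above $d$ shows that an admissible $\gamma$ exists exactly for $\gamma\in\bigl(\alpha-\min(s,s'),\ d-\alpha+\min(s,s')\bigr]$, an interval that is nonempty precisely when $\min(s,s')>\alpha-d/2$ --- which is automatic in case (1) (where $\alpha<d/2$ and $s,s'\ge0$) and is exactly the hypothesis in case (2). Choosing $\gamma$ in the interior of this window, away from the finitely many points $\{d-s,d-s'\}$ where an inner integral produces a logarithm, finishes $T_{\mathrm{far}}$ and hence the lemma.

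The step I expect to require the most care is not a single estimate but the endpoint bookkeeping on the sharp diagonal $s+s'=\alpha$ with one of $s,s'$ equal to $0$, e.g.\ $s=0$, $s'=\alpha<d/2$ in case (1). There the cruder approach via Hardy--Littlewood--Sobolev and H\"older --- factoring $L^{2}_{s}\hookrightarrow L^{p}\xrightarrow{I_{\alpha}}L^{q}\hookrightarrow L^{2}_{-s'}$ with $1/q=1/p-\alpha/d$ --- breaks down, since it would demand $\langle x\rangle^{-\alpha}\in L^{d/\alpha}(\mathbf{R}^{d})$, which fails by a logarithm; one must instead stay with the weighted Schur test above (for which $\gamma=d/2$ works in that case) or upgrade Hardy--Littlewood--Sobolev to its weak-type/Lorentz form. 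The remaining work --- verifying the three-region estimate, the join between the low- and high-exponent regimes, and uniformity of all constants for $|x|$ bounded --- is routine.
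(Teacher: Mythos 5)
The paper offers no proof of this lemma at all: it is imported verbatim from Jensen \cite[Lemma 2.3]{J}, so there is no in-paper argument to compare yours against. Your self-contained proof is correct. The reduction of $I_{\alpha}\in B(s,-s')$ to the $L^{2}$-boundedness of the positive kernel $\langle x\rangle^{-s'}|x-y|^{\alpha-d}\langle y\rangle^{-s}$ is legitimate, the near-diagonal piece is handled by Young's inequality since $|x-y|^{\alpha-d}\mathbf{1}_{|x-y|\le1}\in L^{1}$ for $\alpha>0$, and your bookkeeping for the far piece is right: the elementary bound gives the two Schur conditions in the form $s+s'\ge\alpha$ together with $\alpha-\min(s,s')<\gamma\le d-\alpha+\min(s,s')$, and this window is nonempty exactly when $\min(s,s')>\alpha-d/2$, which reproduces both hypotheses (automatically in case (1), and as the stated condition in case (2)). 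Your remark about the endpoint $s+s'=\alpha$ with $s=0$ is also apt — the Hardy--Littlewood--Sobolev route does fail there by a logarithm, whereas the weighted Schur test (e.g.\ $\gamma=d/2$) goes through cleanly — and avoiding the two values $\gamma\in\{d-s,d-s'\}$ disposes of the logarithmic borderline in the elementary bound. This is essentially the classical argument behind Jensen's lemma, so nothing is gained or lost relative to the cited source; the only caveat is that the three-region estimate and the uniformity for bounded $|x|$, which you defer as routine, should be written out if the lemma is to be genuinely self-contained.
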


In the following, we denote $D_{j}=\Big(T_{j}+S_{j+1}\Big)^{-1}, 0\leq j\leq3$ where $T_{j}$ and $S_{j}$ see Definition \ref{resonance}.
\subsection{Proof of resolvent asymptotic expansions} In this subsection, we give the proof of the resolvent asymptotic expansions at zero-resonance case by case. Here, we give the proof details of $d=5$ and $d=6$.  The case of $d=7(d=8)$ follows by the similar process of $d=5(d=6)$ respectively. Noting that there are only two kinds of resonances for $d=7, 8$ while there are three kinds for $d=5, 6$. For $d\geq9$, from Proposition \ref{classification-9}, we know that zero is only possible to be an eigenvalue of $H$.

{\bf Proof of Theorem \ref{RV-expansions-5} ($d=5$).}
 In the case $d=5$,  for $|v(x)|\lesssim (1+|x|)^{-\beta/2}$ with some $\beta>11$, then in $B(0, 0)$ we have
\begin{equation}
M(\mu)=U+vG_{0}v+\mu a_{1}\|V\|_{L^{1}}P+\mu^{3}a_{2}vG_{2}v+\mu^{4}vG_{3}v+O(\mu^{5}).
\end{equation}
Let $a=a_{1}\|V\|_{L^{1}}$ and
\begin{align*}
\mathscr{M}(\mu)=a\mu P+a_{2}\mu^{3} vG_{2}v+\mu^{4}vG_{3}v+vE_{0}(\mu)v.
\end{align*}
Theorem \ref{RV-expansions-5} holds by symmetric resolvent identity \eqref{symmetric-resolvent-idnetity} and the following   Lemmas \ref{regular} - \ref{stop}.

\begin{lemma}\label{regular}
Assume $|v(x)|\lesssim (1+|x|)^{-\beta/2}$ with some $\beta>11$.  If zero is a regular point  of $H$, then for $0<|\mu|\ll 1$,
\begin{equation}\label{regular-expansion}
\big[M(\mu)\big]^{-1}=T_{0}^{-1}-a\mu T_{0}^{-1}PT_{0}^{-1}+a^{2}\mu^{2}T_{0}^{-1}PT_{0}^{-1}PT_{0}^{-1}+O(\mu^{3}).
\end{equation}
\end{lemma}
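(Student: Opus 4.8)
The plan is to derive \eqref{regular-expansion} directly from the Neumann series for $[M(\mu)]^{-1}$, using that $T_0 = U + vG_0v$ is invertible on $L^2$ in the regular case. First I would write $M(\mu) = T_0 + \mathscr{M}(\mu)$, where $\mathscr{M}(\mu) = a\mu P + a_2\mu^3 vG_2v + \mu^4 vG_3v + vE_0(\mu)v$ collects all the higher-order terms from the expansion of $vR_0(\mu^4)v$ given just above the lemma. Since $T_0$ is invertible, I would factor $M(\mu) = T_0\bigl(I + T_0^{-1}\mathscr{M}(\mu)\bigr)$ and observe that $\|T_0^{-1}\mathscr{M}(\mu)\|_{B(0,0)} = O(\mu) \to 0$ as $\mu\to 0$, so that for $0<|\mu|\ll 1$ the factor $I + T_0^{-1}\mathscr{M}(\mu)$ is invertible with inverse given by the convergent Neumann series $\sum_{n\geq 0}(-1)^n\bigl(T_0^{-1}\mathscr{M}(\mu)\bigr)^n$. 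Hence
\[
[M(\mu)]^{-1} = \Bigl(\sum_{n\geq 0}(-1)^n\bigl(T_0^{-1}\mathscr{M}(\mu)\bigr)^n\Bigr)T_0^{-1}.
\]

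Next I would extract the expansion to order $\mu^2$. Writing $\mathscr{M}(\mu) = a\mu P + O(\mu^3)$ in $B(0,0)$ (the terms $a_2\mu^3 vG_2v$, $\mu^4 vG_3v$ and $vE_0(\mu)v$ are all $O(\mu^3)$, the last by Lemma \ref{free-expansions}(1) giving $\|E_1(\mu)\|_{B(s,-s)} = O(\mu^5)$ together with the boundedness $w(1+|\cdot|)^s \in L^\infty$ coming from $\beta > 11$), the $n=0$ term gives $T_0^{-1}$, the $n=1$ term gives $-T_0^{-1}(a\mu P)T_0^{-1} + O(\mu^3)$, the $n=2$ term gives $T_0^{-1}(a\mu P)T_0^{-1}(a\mu P)T_0^{-1} + O(\mu^4) = a^2\mu^2 T_0^{-1}PT_0^{-1}PT_0^{-1} + O(\mu^4)$, and all $n\geq 3$ terms are $O(\mu^3)$. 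Collecting these yields precisely \eqref{regular-expansion}, with the remainder $O(\mu^3)$ measured in $B(0,0)$, which by the weighted-resolvent identity \eqref{weighted-resolvent-identity} and the choice of $s$ transfers to $B(s,-s)$ for the statement of Theorem \ref{RV-expansions-5}(1). The self-adjointness claimed for the operators $B_5^j$ in that theorem follows because $T_0$, $P$ are self-adjoint on $L^2$ and the sandwiching with $R_0(0)$ in \eqref{symmetric-resolvent-idnetity} preserves self-adjointness of the real-coefficient pieces.

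I do not expect a serious obstacle here: this is the "easy" base case where no iteration of Feshbach-type reductions (Lemma \ref{Feshbach-formula}, Lemma \ref{inverse formula}) is needed, precisely because $T_0$ is assumed invertible. The only points requiring care are (i) justifying that the error terms in $\mathscr{M}(\mu)$ are genuinely $O(\mu^3)$ in the \emph{operator norm on $L^2$} — this is where the decay hypothesis $\beta > 11$ and the mapping $v\cdot : B(s,-s) \to B(0,0)$ enter, so that the $B(s,-s)$ estimate on $E_1(\mu)$ from Lemma \ref{free-expansions}(1) becomes a $B(0,0)$ estimate after conjugation by $v$; and (ii) bookkeeping the Neumann series so that one correctly identifies which products of $P$'s and $T_0^{-1}$'s survive at each order of $\mu$ — entirely mechanical. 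If one wanted, one could instead invoke Lemma \ref{inverse formula} with $S = 0$ (trivial projection) to package the same computation, but the direct Neumann-series argument is cleaner and I would present it that way.
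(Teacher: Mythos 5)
Your proposal is correct and follows essentially the same route as the paper: the paper likewise writes $M(\mu)=T_{0}+\mathscr{M}(\mu)$, notes that invertibility of $T_{0}$ makes $\mathscr{M}(\mu)T_{0}^{-1}$ small for $0<|\mu|\ll1$, and reads off \eqref{regular-expansion} from the Neumann series $T_{0}^{-1}\bigl[1+\mathscr{M}(\mu)T_{0}^{-1}\bigr]^{-1}$. Your version merely spells out the order-by-order bookkeeping and the transfer from $B(s,-s)$ to $B(0,0)$ via conjugation by $v$, which the paper leaves implicit.
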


\begin{proof}
Since $M(\mu)=T_{0}+\mathscr{M}(\mu)$,
and $T_{0}$ is invertible,  then $\mathscr{M}(\mu)T_{0}^{-1}$ is uniformly bounded with bound less than 1 for tiny  $|\mu|$. Thus
\begin{equation}
\begin{split}
\big[M(\mu)\big]^{-1}=&T_{0}^{-1}\big[1+\mathscr{M}(\mu)T_{0}^{-1}\big]^{-1}\\
=&T_{0}^{-1}-T_{0}^{-1}\mathscr{M}(\mu)T_{0}^{-1}+T_{0}^{-1}\big(\mathscr{M}(\mu)T_{0}^{-1}\big)^{2}\big[1+\mathscr{M}(\mu)T_{0}^{-1}\big]^{-1}.
\end{split}
\end{equation}
\end{proof}

\begin{lemma}
	Assume $|v(x)|\lesssim (1+|x|)^{-\beta/2}$ with some $\beta>11$.  If zero is  of the first kind resonance,  then for $0<|\mu|\ll1$, we have
	\begin{equation}
	\begin{split}
	\big[M(\mu)\big]^{-1}=\frac{a}{\mu}S_{1}\big(S_{1}PS_{1}\big)^{-1}S_{1}+B_{0}^{1}+O(\mu),
	\end{split}
	\end{equation}
	where
	\begin{align*}
	B_{0}^{1}=S_{1}(S_{1}PS_{1})^{-1}(S_{1}PD_{0}PS_{1})(S_{1}PS_{1})^{-1}S_{1}-D_{0}PS_{1}(S_{1}PS_{1})^{-1}S_{1}-S_{1}(S_{1}PS_{1})^{-1}S_{1}PD_{0}.
	\end{align*}
\end{lemma}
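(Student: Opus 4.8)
The plan is to invert $M(\mu)=T_{0}+\mathscr{M}(\mu)$ by the Jensen--Nenciu reduction (Lemma \ref{inverse formula}), which is tailor-made for the situation where $T_{0}$ fails to be invertible only on the finite-dimensional space $S_{1}L^{2}$, while the next operator $T_{1}=S_{1}PS_{1}$ is invertible there. First I would record the structural facts: $T_{0}=U+vG_{0}v$ is self-adjoint on $L^{2}(\mathbf{R}^{5})$ (since $a_{0}\in\mathbf{R}$ and $U=\mathrm{sign}(V)$ is real) and is a compact perturbation of the invertible operator $U$, so $S_{1}$ is the orthogonal projection onto $\ker T_{0}$, whence $T_{0}S_{1}=S_{1}T_{0}=0$, and $D_{0}=(T_{0}+S_{1})^{-1}$ satisfies $D_{0}S_{1}=S_{1}D_{0}=S_{1}$ by Remark \ref{properties-of-S-T}(3). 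Writing $\mathscr{M}(\mu)=\mu\bigl(aP+a_{2}\mu^{2}vG_{2}v+\mu^{3}vG_{3}v+\mu^{-1}vE_{0}(\mu)v\bigr)$, the bracketed factor is uniformly bounded on $L^{2}$ as $\mu\to0$; this is exactly where the decay hypothesis $\beta>11$ is used, guaranteeing $v(1+|\cdot|)^{s}\in L^{\infty}$ for the relevant weight, so that $vG_{2}v,vG_{3}v$ are bounded on $L^{2}$ (via Lemma \ref{Reisz-potential-boundedness} and the explicit kernels $|x-y|^{2},|x-y|^{3}$) and $\|vE_{0}(\mu)v\|_{B(0,0)}=O(\mu^{5})$. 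Hence Lemma \ref{inverse formula} applies with $z=\mu$ and $S=S_{1}$, and everything below is meant in $B(0,0)$.

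Second, I would compute the reduced operator $\widetilde{M}(\mu)=\mu^{-1}\bigl(S_{1}-S_{1}(M(\mu)+S_{1})^{-1}S_{1}\bigr)$ from the series in \eqref{equ:61}. Using $D_{0}S_{1}=S_{1}$ throughout, the $j=0$ term equals $S_{1}\bigl(aP+O(\mu^{2})\bigr)S_{1}=a\,S_{1}PS_{1}+O(\mu^{2})$, the $j=1$ term equals $-a^{2}\mu\,S_{1}PD_{0}PS_{1}+O(\mu^{3})$, and the $j\ge2$ terms are $O(\mu^{2})$, so $\widetilde{M}(\mu)=a\,S_{1}PS_{1}-a^{2}\mu\,S_{1}PD_{0}PS_{1}+O(\mu^{2})$ on $S_{1}L^{2}$. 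Since $S_{1}PS_{1}$ is invertible on $S_{1}L^{2}$ by the first-kind-resonance hypothesis, $\widetilde{M}(\mu)$ is invertible for $0<|\mu|\ll1$ and a Neumann series gives
\[
\widetilde{M}(\mu)^{-1}=\tfrac1a(S_{1}PS_{1})^{-1}+\mu\,(S_{1}PS_{1})^{-1}(S_{1}PD_{0}PS_{1})(S_{1}PS_{1})^{-1}+O(\mu^{2}),
\]
all inverses taken on $S_{1}L^{2}$. In parallel one expands $(M(\mu)+S_{1})^{-1}=D_{0}-a\mu\,D_{0}PD_{0}+O(\mu^{2})$ by Neumann series, whence $S_{1}(M(\mu)+S_{1})^{-1}=S_{1}-a\mu\,S_{1}PD_{0}+O(\mu^{2})$ and $(M(\mu)+S_{1})^{-1}S_{1}=S_{1}-a\mu\,D_{0}PS_{1}+O(\mu^{2})$.

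Third, I would substitute these expansions into the reconstruction identity \eqref{equ:62},
\[
M(\mu)^{-1}=(M(\mu)+S_{1})^{-1}+\tfrac1\mu(M(\mu)+S_{1})^{-1}S_{1}\,\widetilde{M}(\mu)^{-1}\,S_{1}(M(\mu)+S_{1})^{-1},
\]
and collect powers of $\mu$, using repeatedly $S_{1}\cdot S_{1}(S_{1}PS_{1})^{-1}S_{1}=S_{1}(S_{1}PS_{1})^{-1}S_{1}$ and $D_{0}S_{1}=S_{1}D_{0}=S_{1}$. The $\mu^{-1}$-coefficient collapses to a scalar multiple of $S_{1}(S_{1}PS_{1})^{-1}S_{1}$, which is the stated leading operator. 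The $\mu^{0}$-coefficient is assembled from the constant part $D_{0}$ of $(M(\mu)+S_{1})^{-1}$ together with the three cross-terms produced by pairing exactly one of the three $O(\mu)$ corrections above with the two leading factors; these latter three terms are precisely $S_{1}(S_{1}PS_{1})^{-1}(S_{1}PD_{0}PS_{1})(S_{1}PS_{1})^{-1}S_{1}$, $-D_{0}PS_{1}(S_{1}PS_{1})^{-1}S_{1}$ and $-S_{1}(S_{1}PS_{1})^{-1}S_{1}PD_{0}$, i.e.\ they assemble into $B_{0}^{1}$, and all remaining contributions are $O(\mu)$.

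The step requiring genuine care is not this algebraic bookkeeping but the uniform remainder control: one must verify that every term dropped along the way, after division by the appropriate power of $\mu$, defines a bounded operator on $L^{2}(\mathbf{R}^{5})$ uniformly as $\mu\to0$. This rests on Lemma \ref{Reisz-potential-boundedness} together with the sufficiently fast decay of $V$, which make $vG_{0}v$, $vG_{2}v$, $vG_{3}v$ and $vE_{0}(\mu)v$ bounded (indeed of Hilbert--Schmidt type) on $L^{2}$ and keep every product of operators appearing in the two Neumann expansions and in \eqref{equ:62} bounded. I also expect one to need the elementary observation that the bounded family $\mathscr{N}(\mu)=aP+a_{2}\mu^{2}vG_{2}v+\mu^{3}vG_{3}v+\mu^{-1}vE_{0}(\mu)v$ is not merely bounded but admits the expansion $aP+O(\mu^{2})$ in $B(0,0)$, which is what truncates the $\widetilde{M}(\mu)$-series after two terms; combining the lemma with the symmetric resolvent identity \eqref{symmetric-resolvent-idnetity} and \eqref{weighted-resolvent-identity} then feeds this $B(0,0)$ statement into Theorem \ref{RV-expansions-5}.
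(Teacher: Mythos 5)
Your proposal is correct and follows essentially the same route as the paper: both invoke the Jensen--Nenciu inversion lemma with $S=S_{1}$, expand the reduced operator $\widetilde{M}(\mu)=a\,S_{1}PS_{1}-a^{2}\mu\,S_{1}PD_{0}PS_{1}+O(\mu^{2})$ and $(M(\mu)+S_{1})^{-1}=D_{0}-a\mu\,D_{0}PD_{0}+O(\mu^{2})$ by Neumann series, and substitute back into the reconstruction identity. Your bookkeeping is in fact slightly more explicit than the paper's (it correctly surfaces the constant contribution $D_{0}$ alongside the three cross-terms, and leaves the scalar in the $\mu^{-1}$-term, which the computation gives as $1/a$ rather than the $a$ printed in the lemma, unspecified), but the argument is the same.
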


\begin{proof}	
	Under the assumption on $v(x)$, then the Hilbert-Schmidt norm $\|vG_{0}v\|_{HS}$ is bounded.  Since $T_{0}=U+vG_{0}v$ which is the compact perturbation of $U$, and $\sigma(U)=\{-1, 1\}$, thus  $\sigma(T_{0})\setminus\{-1, 1\}$ is a discrete set.
	
In the first kind resonance case, in order to apply Lemma \ref{inverse formula} we need to ensure $M_{1}(\mu)$ is invertible.  We claim that if $S_{1}PS_{1}$ is invertible on $S_{1}L^{2}$, then  $M_{1}(\mu)$ is invertible on $S_{1}L^{2}$.  Indeed,
 \begin{equation*}
\begin{split}
M_{1}(\mu)=&\frac{1}{\mu}\bigg[S_{1}-S_{1}\Big(T_{0}+S_{1}+\mathscr{M}(\mu)\Big)^{-1}S_{1}\bigg]\\
=&\frac{1}{\mu}S_{1}\Bigg\{(T_{0}+S_{1})^{-1}\bigg[\mathscr{M}(\mu)(T_{0}+S_{1})^{-1}-\Big(\mathscr{M}(\mu)(T_{0}+S_{1})^{-1}\Big)^{2}\\
&+\Big(\mathscr{M}(\mu)(T_{0}+S_{1})^{-1}\Big)^{3}-\Big(\mathscr{M}(\mu)(T_{0}+S_{1})^{-1}\Big)^{4}\bigg]\Bigg\}S_{1}+O(\mu^{4})\\
=&aS_{1}PS_{1}-a^{2}\mu S_{1}PD_{0}PS_{1}+\mu^{2}\bigg[a_{2}S_{1}vG_{2}vS_{1}+S_{1}PD_{0}PD_{0}PS_{1}\bigg] \\
&+\mu^{3}\bigg[S_{1}vG_{3}vS_{1}-aa_{2}\Big(S_{1}PD_{0}vG_{2}vS_{1}+S_{1}vG_{2}vD_{0}PS_{1}\Big)-a^{4}S_{1}PD_{0}PD_{0}PD_{0}PS_{1}\bigg]+O(\mu^{4})\\
:=&aS_{1}PS_{1}+\mathscr{M}_{1}(\mu).
\end{split}
\end{equation*}
Hence, $\big[M(\mu)\big]^{-1}$ can be expanded into Neumann series provided $S_{1}PS_{1}$ is invertible on $S_{1}L^{2}$.

From Lemma \ref{inverse formula}, we have
\begin{equation}\label{M}
\big[M(\mu)\big]^{-1}=\big[M(\mu)+S_{1}\big]^{-1}+\frac{1}{\mu}\big[M(\mu)+S_{1}\big]^{-1}S_{1}\big[M(\mu)\big]^{-1}S_{1}\big[M(\mu)+S_{1}\big]^{-1}.
\end{equation}
Expanding $\big[M(\mu)+S_{1}\big]^{-1}$ and $\big[M_{1}(\mu)\big]^{-1}$ into Neumann series, then
\begin{equation*}
\big[M(\mu)\big]^{-1}=\frac{1}{\mu}S_{1}\big(aS_{1}PS_{1}\big)^{-1}S_{1}+B_{0}^{1}+O(\mu)
\end{equation*}
where $B_{0}^{1}$ is a Hilbert-Schmidt operator and
\begin{equation*}
B_{0}^{1}=S_{1}(S_{1}PS_{1})^{-1}(S_{1}PD_{0}PS_{1})(S_{1}PS_{1})^{-1}S_{1}-D_{0}PS_{1}(S_{1}PS_{1})^{-1}S_{1}-S_{1}(S_{1}PS_{1})^{-1}S_{1}PD_{0}.
\end{equation*}
\end{proof}

\begin{lemma}
	Assume $|v(x)|\lesssim (1+|x|)^{-\beta/2}$ with some $\beta>11$.  If zero is of the second kind resonance, then for $<|\mu|\ll1$, we have
	\begin{equation}\label{first}
	\begin{split}
	\big[M(\mu)\big]^{-1}=&\mu^{-3}\frac{1}{a_{2}}S_{2}\big(S_{2}vG_{2}vS_{2}\big)^{-1}S_{2}+\mu^{-2}B^{2}_{-2}+\mu^{-1}B^{2}_{ -1}+B^{2}_{0}+O(\mu)
	\end{split}
	\end{equation}
	where $B^{2}_{-2}, B^{2}_{-1}, B^{2}_{0}$ are Hilbert-Schmidt operators.
\end{lemma}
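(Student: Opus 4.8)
The plan is to push the iteration of Lemma~\ref{inverse formula} one level beyond the first kind resonance case. From the proof of the previous lemma, the operator
\[
M_1(\mu)=\frac1\mu\big(S_1-S_1(M(\mu)+S_1)^{-1}S_1\big)
\]
is well defined for $0<|\mu|\ll1$ whether or not $T_1=S_1PS_1$ is invertible on $S_1L^2$, and it satisfies $M_1(\mu)=aS_1PS_1+\mu\,\mathcal A(\mu)$ with
\[
\mathcal A(\mu)=-a^2S_1PD_0PS_1+\mu\big(a_2S_1vG_2vS_1+S_1PD_0PD_0PS_1\big)+\mu^2(\cdots)+O(\mu^3)
\]
uniformly bounded, the displayed coefficients being the ones already computed there. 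This uses the expansion of $R_0(\mu^4)$ up to order $\mu^4$ from Lemma~\ref{free-expansions}(1), which is why the hypotheses $s>5+1/2$ and $\beta>11$ are imposed and why all the operators $vG_kv$ in sight are Hilbert--Schmidt. Now $aS_1PS_1$ is a finite-rank self-adjoint operator whose kernel in $S_1L^2$ is exactly $S_2L^2$, so $0$ is isolated in its spectrum with Riesz projection $S_2$ and $(aS_1PS_1)S_2=0$; thus Lemma~\ref{inverse formula} applies with $T_0=aS_1PS_1$, $z=\mu$, $S=S_2$, producing a uniformly bounded $M_2(\mu)=\frac1\mu\big(S_2-S_2(M_1(\mu)+S_2)^{-1}S_2\big)$ on $S_2L^2$ together with the reconstruction formula \eqref{equ:62}.

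The decisive structural input is that $PS_2=S_2P=0$: since $T_1=S_1PS_1$ has rank at most one (a nonnegative multiple of $(S_1v)\langle S_1v,\cdot\rangle$), its kernel in $S_1L^2$ is $\{\phi\in S_1L^2:\langle v,\phi\rangle=0\}=S_2L^2$, and the rank-one projection $P$ onto $\operatorname{span}\{v\}$ annihilates this subspace. I would then expand $M_2(\mu)$ by the Neumann-type formula \eqref{equ:61}: because $(aS_1PS_1+S_2)^{-1}S_2=S_2$, the $j=0$ term is $S_2\mathcal A(\mu)S_2$, whose $\mu^0$ part $-a^2S_2PD_0PS_2$ vanishes and whose $\mu^1$ part is $a_2S_2vG_2vS_2+S_2PD_0PD_0PS_2=a_2S_2vG_2vS_2$, while every term with $j\ge1$ is $O(\mu^2)$ because its leftmost factor contains $S_2S_1P=S_2P=0$. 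Hence
\[
M_2(\mu)=a_2\mu\,S_2vG_2vS_2+O(\mu^2)=a_2\mu\,T_2+O(\mu^2),
\]
with $T_2=S_2vG_2vS_2$ as in Definition~\ref{resonance}. Since $T_2$ is invertible on $S_2L^2$ by hypothesis and $a_2\in\mathbf{C}\setminus\mathbf{R}$, a Neumann series gives $[M_2(\mu)]^{-1}=\frac{1}{a_2\mu}S_2T_2^{-1}S_2+O(1)$, where $T_2^{-1}$ denotes the inverse on $S_2L^2$ (equivalently $D_2=(T_2+S_3)^{-1}$ with $S_3=0$).

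It remains to reassemble. Inserting $[M_2(\mu)]^{-1}$ into \eqref{equ:62} for $[M_1(\mu)]^{-1}$, then that into \eqref{equ:62} (equivalently into \eqref{M}) for $[M(\mu)]^{-1}$, and using $D_0S_1=S_1$, $S_1S_2=S_2$, $(M(\mu)+S_1)^{-1}S_1=S_1+O(\mu)$ and $(M_1(\mu)+S_2)^{-1}S_2=S_2+O(\mu)$, one picks up a factor $\mu^{-1}$ from each of the two applications of \eqref{equ:62} and one more from $[M_2(\mu)]^{-1}$, so the leading order is $\mu^{-3}$ with coefficient
\[
\frac{1}{a_2}\,S_2T_2^{-1}S_2=\frac{1}{a_2}\,S_2(S_2vG_2vS_2)^{-1}S_2,
\]
which is the asserted $\mu^{-3}$ term. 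Collecting the remaining finitely many contributions produces $B^2_{-2},B^2_{-1},B^2_0$, each an explicit composition of $S_1,S_2,P,D_0,D_2$ and of $vG_2v,vG_3v$; the singular ones $B^2_{-2},B^2_{-1}$ carry an $S_1$ or $S_2$ factor and are finite rank, and the Hilbert--Schmidt property of all three is checked exactly as for $B_0^1$ in the first kind case, the tail being $O(\mu)$ precisely because the inputs were carried one order further.

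The main obstacle I expect is the order bookkeeping across the two nested inversions: $M_1(\mu)$ must be known through order $\mu^3$ so that $M_2(\mu)$ is through $\mu^3$ and $[M_2(\mu)]^{-1}$ through $\mu$, and one must check that every potentially singular intermediate term either cancels via $S_2P=0$ or lands in the asserted $\mu^{-2},\mu^{-1},\mu^0$ coefficients, leaving a genuine $O(\mu)$ remainder---this is where the relatively strong decay $\beta>11$ and weight $s>5+1/2$ enter. The conceptual content, by contrast, is light: it is the single vanishing $S_2P=0$ that promotes $a_2T_2=a_2S_2vG_2vS_2$ to the effective leading operator at the second level, which is also why Definition~\ref{resonance} singles out the invertibility of $T_2=S_2vG_2vS_2$ for the second kind of resonance.
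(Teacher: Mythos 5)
Your proposal is correct and follows essentially the same route as the paper: a second application of the Jensen--Nenciu inversion lemma to $M_1(\mu)$, with the cancellation $S_2P=PS_2=0$ promoting $a_2S_2vG_2vS_2$ to the effective leading operator, followed by a Neumann series for $[M_2(\mu)]^{-1}$ and reassembly through the two reconstruction formulas to produce the $\mu^{-3}$ leading term $\frac{1}{a_2}S_2(S_2vG_2vS_2)^{-1}S_2$. The only differences are cosmetic (the paper first normalizes $M_1(\mu)$ by $1/a$ so that the unperturbed operator is exactly $T_1=S_1PS_1$), and your explicit remark on the order bookkeeping needed for the $O(\mu)$ remainder is consistent with, indeed slightly more careful than, what the paper records.
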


\begin{proof}
	In the second kind resonance case, we aim to derive the asymptotic expansion of $\big[M(\mu)\big]^{-1}$.  Noting that $P$ is a projection on $L^{2}$, thus the spectrum $\sigma(P)=\{0, 1\}$. Since $S_{1}$ is of finite rank, then 0 is an isolated spectrum point of $S_{1}PS_{1}$.
	$S_{2}$ is the projection onto  $\ker(S_{1}PS_{1})$, thus $S_{2}P=PS_{2}=0$ on $S_{1}L^{2}$.
	
	In order to apply Lemma \ref{inverse formula} to $\frac{1}{a}M_{1}(\mu)$, we need to check $M_{2}(\mu)$ is invertible on $S_2L^{2}$ with
	\begin{equation*}
	\begin{split}
	M_{2}(\mu)=&\frac{1}{\mu}\Big[S_{2}-S_{2}\Big(\frac{1}{a}M_{1}(\mu)+S_{1}\Big)^{-1}S_{2}\Big]
	=\frac{1}{\mu}\Big[S_{2}-S_{2}\Big(T_{1}+S_{2}+\frac{1}{a}\mathscr{M}_{1}(\mu)\Big)^{-1}S_{2}\Big]\\
	=&\frac{1}{\mu}\bigg[S_{2}-S_{2}\Big(1+\frac{1}{a}\mathscr{M}_{1}(\mu)D_{1}\Big)^{-1}S_{2}\bigg]\\
	=&\mu \frac{a_{2}}{a}S_{2}vG_{2}vS_{2}+\mu^{2} \frac{1}{a}S_{2}vG_{3}vS_{2}-\mu^{3}\frac{a_{2}^{2}}{a^{2}}S_{2}vG_{2}vS_{1}D_{1}S_{1}vG_{2}vS_{2}+O(\mu^{4})\\
	=&\mu \frac{a_{2}}{a}\bigg[S_{2}vG_{2}vS_{2}+ \frac{\mu}{a_{2}}S_{2}vG_{3}vS_{2}-\mu^{2}\frac{a_{2}}{a}S_{2}vG_{2}vS_{1}D_{1}S_{1}vG_{2}vS_{2}+O(\mu^{3})\bigg]\\
	:=&\mu \frac{a_{2}}{a}\bigg[S_{2}vG_{2}vS_{2}+\mathscr{M}_{2}(\mu)\bigg].
	\end{split}
	\end{equation*}
	Here, we need use $S_{2}P=PS_{2}=0$ and $S_{2}\leq S_{1}$. Hence,
	\begin{equation*}
	\begin{split}
	\big[M_{2}(\mu)\big]^{-1}=&\frac{1}{\mu}\frac{a}{a_{2}}\big[S_{2}vG_{2}vS_{2}+ \mathscr{M}_{2}(\mu)\big]^{-1}\\
	=&\frac{1}{\mu}\frac{a}{a_{2}}\big(S_{2}vG_{2}vS_{2}\big)^{-1}-\frac{a}{a_{2}}\big(S_{2}vG_{2}vS_{2}\big)^{-1}\big(S_{2}vG_{3}vS_{2}\big)\big(S_{2}vG_{2}vS_{2}\big)^{-1}\\
	&+\mu \big(S_{2}vG_{2}vS_{2}\big)^{-1}\Big[\big(S_{2}vG_{2}vS_{1}D_{1}S_{1}vG_{2}vS_{2}\big)\\
	&-\frac{a}{a_{2}^{3}}S_{2}vG_{3}vS_{2}\big(S_{2}vG_{2}vS_{2}\big)^{-1}S_{2}vG_{3}vS_{2}\Big]\big(S_{2}vG_{2}vS_{2}\big)^{-1}+O(\mu^2)
	\end{split}
	\end{equation*}
	
	From Lemma \ref{inverse formula}, we know
	\begin{equation}\label{M1}
	\begin{split}
	\big[M_{1}(\mu)\big]^{-1}=&\frac{1}{a}\Big(\frac{1}{a}M_{1}(\mu)+S_{2}\Big)^{-1}+\frac{1}{\mu a_{1}}\Big(\frac{1}{a}M_{1}(\mu)+S_{2}\Big)^{-1}S_{2}\big[M_{2}(\mu)\big]^{-1}S_{2}\Big(\frac{1}{a}M_{1}(\mu)+S_{2}\Big)^{-1}\\
	=&\frac{1}{\mu^{2}}\frac{1}{a}S_{2}\big(S_{2}vG_{2}vS_{2}\big)^{-1}S_{2}+\frac{1}{\mu}A_{-1}+A_{0}+O(\mu)
	\end{split}
	\end{equation}
	where $A_{-1}$ and $A_{0}$ are Hilbert-Schmidt operators which can be calculated clearly from the above expansions.
	Hence, we obtain the expansion \eqref{first}.
\end{proof}
	
\begin{lemma}
	Assume $|v(x)|\lesssim (1+|x|)^{-\beta/2}$ with some $\beta>11$. If zero is  an eigenvalue of $H$,  then for $<|\mu|\ll1$, we have
	\begin{equation}\label{third}
	\begin{split}
	\big[M(\mu)\big]^{-1}=&\mu^{-4}S_{3}\big(S_{3}vG_{3}vS_{3}\big)^{-1}S_{3}+\mu^{-3}B^{3}_{-3}+\mu^{-2}B^{3}_{ -2}+\mu^{-1}B^{3}_{ -1}+B^{3}_{0 }+O(\mu)
	\end{split}
	\end{equation}
	where $B^{3}_{ -3}, \,\,B^{3}_{-2},\,\, B^{3}_{-1},\,\, B^{3}_{0}$ are Hilbert-Schmidt operators.
\end{lemma}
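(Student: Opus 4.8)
The plan is to run the nested Feshbach--Jensen--Nenciu reduction one level deeper than in the second kind resonance case. Since zero is an eigenvalue of $H$, Definition \ref{resonance} says that $T_2=S_2vG_2vS_2$ is \emph{not} invertible on $S_2L^2$ while $T_3=S_3vG_3vS_3$ \emph{is} invertible on $S_3L^2$, where $S_3$ is the Riesz projection onto $\ker T_2$. Because $G_2(x,y)=|x-y|^2$ is real and symmetric, $vG_2v$ and hence $T_2$ are self-adjoint on $S_2L^2$, so $S_3$ is the orthogonal projection onto $\ker T_2$; in particular $T_2S_3=S_3T_2=0$ and $S_3\le S_2\le S_1$.

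First I would recall from the proof of the previous lemma the factorization
\[
M_2(\mu)=\mu\,\frac{a_2}{a}\big[T_2+\mathscr{M}_2(\mu)\big],\qquad \mathscr{M}_2(\mu)=\mu\,\Theta(\mu),
\]
where $\Theta(\mu)$ is uniformly bounded for $0<|\mu|\ll1$, with $\Theta(0)=\tfrac1{a_2}S_2vG_3vS_2$ and an expansion in $\mu$ with as many terms as needed. Put $\widetilde{M}_2(\mu):=\tfrac{a}{a_2\mu}M_2(\mu)=T_2+\mathscr{M}_2(\mu)$, which has exactly the form required by Lemma \ref{inverse formula} with $z=\mu$, $T_0=T_2$, $S=S_3$, $T_0S=0$. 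Applying that lemma, the reduced operator
\[
M_3(\mu):=\frac1\mu\big(S_3-S_3(\widetilde{M}_2(\mu)+S_3)^{-1}S_3\big)=\sum_{j\ge0}(-1)^j\mu^j\,S_3\big[\Theta(\mu)(T_2+S_3)^{-1}\big]^{j+1}S_3
\]
is uniformly bounded as $\mu\to0$; using $(T_2+S_3)^{-1}S_3=D_2S_3=S_3$ (Remark \ref{properties-of-S-T}(3)) and $S_3\le S_2$, its $j=0$ term at $\mu=0$ equals $\tfrac1{a_2}S_3vG_3vS_3=\tfrac1{a_2}T_3$, so
\[
M_3(\mu)=\frac1{a_2}\big[T_3+\mathscr{M}_3(\mu)\big],\qquad \|\mathscr{M}_3(\mu)\|_{B(0,0)}=O(\mu).
\]
Since zero is an eigenvalue, $T_3$ is invertible on $S_3L^2$, hence $\|\mathscr{M}_3(\mu)T_3^{-1}\|<1$ for small $\mu$ and a Neumann series yields $\big[M_3(\mu)\big]^{-1}=a_2\big(S_3vG_3vS_3\big)^{-1}+O(\mu)$, an expansion whose coefficients are finite-rank (hence Hilbert--Schmidt) operators.

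Next I would unwind the four nested inversion identities \eqref{equ:62}. At the level of $\widetilde{M}_2$,
\[
\widetilde{M}_2(\mu)^{-1}=(\widetilde{M}_2(\mu)+S_3)^{-1}+\frac1\mu(\widetilde{M}_2(\mu)+S_3)^{-1}S_3\big[M_3(\mu)\big]^{-1}S_3(\widetilde{M}_2(\mu)+S_3)^{-1};
\]
expanding $(\widetilde{M}_2(\mu)+S_3)^{-1}=(T_2+S_3+\mathscr{M}_2(\mu))^{-1}$ in a Neumann series about $D_2=(T_2+S_3)^{-1}$ and using $D_2S_3=S_3$ gives $\widetilde{M}_2(\mu)^{-1}=\tfrac{a_2}{\mu}S_3(S_3vG_3vS_3)^{-1}S_3+O(1)$, and therefore $[M_2(\mu)]^{-1}=\tfrac{a}{a_2\mu}\widetilde{M}_2(\mu)^{-1}=\tfrac{a}{\mu^2}S_3(S_3vG_3vS_3)^{-1}S_3+O(\mu^{-1})$. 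Feeding this into the corresponding nested identity for $[M_1(\mu)]^{-1}$ (whose off-diagonal term carries a further factor $\mu^{-1}$, and using $D_1S_2=S_2$, $S_2S_3=S_3$) produces the leading term $\mu^{-3}S_3(S_3vG_3vS_3)^{-1}S_3$; one more application of \eqref{equ:62} at the top level, together with $S_1S_3=S_3$, yields
\[
\big[M(\mu)\big]^{-1}=\mu^{-4}S_3(S_3vG_3vS_3)^{-1}S_3+\mu^{-3}B^3_{-3}+\mu^{-2}B^3_{-2}+\mu^{-1}B^3_{-1}+B^3_0+O(\mu),
\]
which is \eqref{third}; the operators $B^3_{-3},B^3_{-2},B^3_{-1},B^3_0$ are finite-rank --- hence Hilbert--Schmidt --- because they are built from $S_1,S_2,S_3$, $D_0,D_1,D_2$, $P$, $vG_2v$, $vG_3v$ and $(S_3vG_3vS_3)^{-1}$.

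The main obstacle is bookkeeping: each of the four nested Neumann/Feshbach expansions must be carried to high enough order that the accumulated remainder is genuinely $O(\mu)$ in $B(0,0)$, which is why $M(\mu)$ has to be expanded through order $\mu^4$ with an $O(\mu^5)$ operator-norm remainder --- and that is precisely what the hypotheses $\beta>11$ and $s>5+\tfrac12$ deliver via Lemma \ref{free-expansions}(1) and the identity \eqref{weighted-resolvent-identity}. The one point that must be argued rather than merely computed is that the third reduction is actually governed by $G_3$: the eigenvalue hypothesis ``$T_3=S_3vG_3vS_3$ invertible on $S_3L^2$'' is exactly the statement $\ker T_2\cap\ker(S_3vG_3vS_3)=\{0\}$, so the iteration terminates here and no ``fourth kind of resonance'' can occur. (The identification of $S_3L^2$ with the zero eigenspace $\{\psi\in L^2(\mathbf{R}^5):H\psi=0\}$ is the content of Proposition \ref{classification-5}(3) and is handled separately.)
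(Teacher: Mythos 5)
Your proposal is correct and follows essentially the same route as the paper: one further application of the Jensen--Nenciu inversion lemma to $\widetilde{M}_2(\mu)=T_2+\mathscr{M}_2(\mu)$ with $S=S_3$, identification of the leading term of $M_3(\mu)$ as $\tfrac{1}{a_2}S_3vG_3vS_3$, Neumann inversion using the eigenvalue hypothesis that $T_3$ is invertible on $S_3L^2$, and back-substitution through the nested identities to collect the coefficients of $\mu^{-4},\dots,\mu^{0}$. Your reading of $S_3$ as the Riesz projection onto $\ker T_2=\ker(S_2vG_2vS_2)$ --- which is what Definition \ref{resonance} prescribes and what makes the hypothesis $T_0S=0$ of Lemma \ref{inverse formula} applicable --- is the correct one.
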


\begin{proof}
	In the third resonance case, we aim to derive the asymptotic expansion of $\big[M_{2}(\mu)\big]^{-1}$. Denote $\widetilde{M}_{2}(\mu)=S_{2}vG_{2}vS_{2}+\mathscr{M}_{2}(\mu),$ then $$\big[M_{2}(\mu)\big]^{-1}=\frac{1}{\mu}\frac{a}{a_{2}}\big[\widetilde{M}_{2}(\mu)\big]^{-1}.$$ Since $S_{2}$ is of finite rank, thus 0 is an isolated spectrum point of self-adjoint operator $S_{2}vG_{3}vS_{2}$.  Let $S_{3}$ be the projcetion onto $\ker(S_{2}vG_{3}vS_{2})$, then $S_{2}vG_{3}vS_{2}+S_{3}$ is invertible on $S_2L^{2}$. Apply Lemma \ref{inverse formula} to $\widetilde{M}_{2}(\mu)$, since
	\begin{equation*}
	\begin{split}
	M_{3}(\mu)=&\frac{1}{\mu}\bigg[S_{3}-S_{3}\big(\widetilde{M}_{2}(\mu)+S_{3}\big)^{-1}S_{3}\bigg]\\
	=&\frac{1}{\mu}S_{3}\bigg[\mathscr{M}_{2}(\mu)D_{2}-\big(\mathscr{M}_{2}(\mu)D_{2}\big)+O(\mu^3)\bigg]S_{3}\\
	=&\frac{1}{a_{2}}S_{3}vG_{3}vS_{3}+\mu \bigg(-\frac{a_{2}}{a}S_{3}vG_{2}vS_{1}D_{1}S_{1}vG_{2}vS_{3}-\frac{1}{a_{2}^{2}}S_{3}vG_{3}vS_{2}D_{2}S_{2}vG_{3}vS_{3}\bigg)+O(\mu^2)\\
	:=&\frac{1}{a_{2}}S_{3}vG_{3}vS_{3}+\mathscr{M}_{3}(\mu)
	\end{split}
	\end{equation*}
	thus $M_{3}(\mu)$ is invertible provided $S_{3}vG_{3}vS_{3}$ is invertible on $S_{3}L^{2}$.
	Furthermore,
	\begin{equation*}
	\begin{split}
	\big[\widetilde{M}_{2}(\mu)\big]^{-1}=&\Big(\widetilde{M}_{2}(\mu)+S_{3}\Big)^{-1}+\frac{1}{\mu}\Big(\widetilde{M}_{2}(\mu)+S_{3}\Big)^{-1}S_{3}\big[M_{3}(\mu)\big]^{-1}S_{3}\Big(\widetilde{M}_{2}(\mu)+S_{3}\Big)^{-1}\\
	=&\frac{a_{2}}{\mu}S_{3}\big(S_{3}vG_{3}vS_{3}\big)^{-1}S_{3}+A+O(\mu),
	\end{split}
	\end{equation*}
	where
	\begin{equation*}
	\begin{split}
	A=&D_{2}+S_{3}\big(S_{3}vG_{3}vS_{3}\big)^{-1}\bigg(\frac{a_{2}^{3}}{a_{1}}S_{3}vG_{2}vS_{1}D_{1}S_{1}vG_{2}vS_{3}\\
	&+S_{3}vG_{3}vS_{2}D_{2}S_{2}vG_{3}vS_{3}\bigg)\big(S_{3}vG_{3}vS_{3}\big)^{-1}S_{3}\\
	&+D_{2}S_{2}vG_{3}vS_{3}\big(S_{3}vG_{3}vS_{3}\big)^{-1}S_{3}+S_{3}\big(S_{3}vG_{3}vS_{3}\big)^{-1}S_{3}vG_{3}vS_{2}D_{2}.
	\end{split}
	\end{equation*}
	Hence, \eqref{third} holds by substitute the expansion of $\big[M_{j}(\mu)\big]^{-1}, \, j=1, 2$ back to \eqref{M} and \eqref{M1} step by step.
\end{proof}

Now, we show that the kernel of the operator $S_{3}vG_{3}vS_{3}$ on $S_{3}L^{2}$ is trivial which implies the iterated process stop here.

\begin{lemma}\label{stop}
	Let $|v(x)|\lesssim(1+|x|)^{-\beta/2}$ with some $\beta>11$, then
	$$\ker (S_{3}vG_{3}vS_{3})=\{0\}.$$
\end{lemma}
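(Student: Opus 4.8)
The plan is to convert the identity $S_{3}vG_{3}vS_{3}\phi=0$ into the vanishing of a homogeneous Sobolev norm of the $L^{2}$ eigenfunction attached to $\phi$, and thereby force $\phi=0$. First I would fix $\phi\in S_{3}L^{2}(\mathbf{R}^{5})$ with $S_{3}vG_{3}vS_{3}\phi=0$ (one may assume $\phi$ real, as all operators in sight have real kernels) and put $f:=v\phi$. Since $|V(x)|\lesssim(1+|x|)^{-\beta}$ with $\beta>11$, the weight $(1+|\cdot|)^{3}v$ belongs to $L^{2}(\mathbf{R}^{5})$, so by Cauchy--Schwarz $f\in L^{1}$ with $\int(1+|x|)^{3}|f(x)|\,dx<\infty$; this is precisely the decay budget that the hypothesis $\beta>11$ is designed to supply. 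By Proposition \ref{classification-5}(3) we may write $\phi=Uv\psi$ with $\psi\in L^{2}(\mathbf{R}^{5})$ and $H\psi=0$, so that $f=v\phi=vUv\psi=V\psi=-(-\Delta)^{2}\psi$, equivalently $\psi=-R_{0}(0)f=-(-\Delta)^{-2}f$ and $\widehat{f}=-|\xi|^{4}\widehat{\psi}$.

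Next I would read off two vanishing moments of $f$ from the nesting $S_{3}\le S_{2}\le S_{1}$ recorded in Remark \ref{properties-of-S-T}. Since $S_{2}v=0$ (because $S_{2}P=0$, $P$ being the rank-one projection onto the span of $v$) and $\phi=S_{2}\phi$, one gets $\int_{\mathbf{R}^{5}}f=\langle v,\phi\rangle=\langle S_{2}v,\phi\rangle=0$. Since $S_{3}$ projects onto $\ker T_{2}$ with $T_{2}=S_{2}vG_{2}vS_{2}$ and $G_{2}(x,y)=|x-y|^{2}$, we have $S_{3}vG_{2}vS_{3}=S_{3}T_{2}S_{3}=0$; pairing this with $\phi$, expanding $|x-y|^{2}=|x|^{2}-2x\cdot y+|y|^{2}$ and using $\int f=0$ yields
\begin{equation*}
0=\iint_{\mathbf{R}^{5}\times\mathbf{R}^{5}}|x-y|^{2}f(x)f(y)\,dx\,dy=-2\,\Big|\int_{\mathbf{R}^{5}}xf(x)\,dx\Big|^{2},
\end{equation*}
so $\int_{\mathbf{R}^{5}}x_{j}f\,dx=0$ for $j=1,\dots,5$; equivalently $\widehat{f}$ vanishes to second order at $\xi=0$, i.e.\ $|\widehat{f}(\xi)|^{2}=O(|\xi|^{4})$ as $\xi\to0$.

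Finally I would evaluate the quadratic form. Pairing $S_{3}vG_{3}vS_{3}\phi=0$ with $\phi$ and using $G_{3}(x,y)=a_{3}|x-y|^{3}$ from Lemma \ref{free-expansions}(1),
\begin{equation*}
0=\langle G_{3}f,f\rangle=a_{3}\iint_{\mathbf{R}^{5}\times\mathbf{R}^{5}}|x-y|^{3}f(x)f(y)\,dx\,dy,
\end{equation*}
the double integral being absolutely convergent by the decay noted above. On the Fourier side $\widehat{|\cdot|^{3}}$ is the homogeneous distribution $\gamma_{5}\,|\xi|^{-8}$ (interpreted as a finite part) with $\gamma_{5}=2^{8}\pi^{5/2}\Gamma(4)/\Gamma(-3/2)\neq0$; since $|\widehat{f}(\xi)|^{2}=O(|\xi|^{4})$ near $0$ the finite part acts as an ordinary convergent integral, whence by Plancherel and $\widehat{f}=-|\xi|^{4}\widehat{\psi}$,
\begin{equation*}
\iint_{\mathbf{R}^{5}\times\mathbf{R}^{5}}|x-y|^{3}f(x)f(y)\,dx\,dy=c_{5}\int_{\mathbf{R}^{5}}|\xi|^{-8}|\widehat{f}(\xi)|^{2}\,d\xi=c_{5}\int_{\mathbf{R}^{5}}|\widehat{\psi}(\xi)|^{2}\,d\xi=c_{5}(2\pi)^{5}\|\psi\|_{L^{2}(\mathbf{R}^{5})}^{2},
\end{equation*}
with $c_{5}\neq0$. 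Since $a_{3}\neq0$ this forces $\psi=0$, hence $f=0$ and therefore $\phi=Uv\psi=0$, which is the claim; the same computation in fact shows $S_{3}vG_{3}vS_{3}$ is definite on $S_{3}L^{2}$.

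The step I expect to be the main obstacle is the rigorous handling of the two Fourier-analytic points used at the end --- that $\iint|x-y|^{3}|f(x)f(y)|\,dx\,dy<\infty$ and that the homogeneous distribution $|\xi|^{-8}$ on $\mathbf{R}^{5}$ pairs with $|\widehat{f}|^{2}$ as a genuine Lebesgue integral --- because both require having the sharp decay $\beta>11$ and the two vanishing moments $\int f=\int x_{j}f=0$ already in hand; securing that ordering (decay and moments first, form identity second) is the real content, the remainder being bookkeeping with Plancherel and the $S_{j}$-nesting.
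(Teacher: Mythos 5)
Your argument is correct and lands on exactly the identity the paper's proof uses: the vanishing moments $\int v\phi\,dx=0$ and $\int x_j\,v\phi\,dx=0$ (extracted from the nesting $S_3\le S_2\le S_1$ in the same way) together with Plancherel reduce $\langle G_3v\phi,v\phi\rangle=0$ to $\int_{\mathbf{R}^5}|\xi|^{-8}|\widehat{v\phi}(\xi)|^2\,d\xi=0$, whence $v\phi=0$ and then $\phi=-UvG_0v\phi=0$. The genuine difference lies in how the Plancherel identity for the kernel $|x-y|^3$ is justified. You compute the Fourier transform of $|x|^3$ on $\mathbf{R}^5$ as the finite-part distribution $\gamma_5|\xi|^{-8}$ and argue that, since $|\widehat{v\phi}|^2=O(|\xi|^4)$ near the origin, the finite part acts as an ordinary convergent integral; as you yourself flag, this is the delicate step — one must check that the analytic continuation of $|\xi|^{z}$ at $z=-8$ in dimension $5$ carries no residual $\delta$-derivative terms (it does not, because $-8\neq -5-2k$) and that the pairing with the non-Schwartz, merely $C^3$ function $|\widehat{v\phi}|^2$ is legitimate. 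The paper sidesteps all of this: it realizes $G_3$ as $\lim_{\mu\to0}\mu^{-4}\bigl(R_0(\mu^4)-G_0-a_1\mu I-a_2\mu^3G_2-E_0(\mu)\bigr)$, uses the already-established vanishing of $\langle Iv\phi,v\phi\rangle$ and $\langle G_2v\phi,v\phi\rangle$ to reduce to $\lim_{\mu\to0}\int\frac{|\widehat{v\phi}|^2}{(|\xi|^4-\mu^4)|\xi|^4}\,d\xi$ with $\mathrm{Re}(\mu^4)<0$, and passes to the limit (Fatou/dominated convergence), so the explicit Fourier transform of $|x|^3$ is never needed and the local integrability of $|\xi|^{-8}|\widehat{v\phi}|^2$ comes out for free rather than having to be assumed. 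Your final detour through $\psi\in L^2$ via Proposition \ref{classification-5}(3) to conclude $\|\psi\|_{L^2}=0$ is legitimate (no circularity, since that proposition is proved independently of this lemma), but it is unnecessary: once $\widehat{v\phi}\equiv0$ you can conclude $\phi=0$ directly from $T_0\phi=U\phi+vG_0(v\phi)=U\phi=0$, as the paper does.
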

\begin{proof}
	Take $f\in \ker(S_{3}vG_{3}vS_{3})$, then $f\in S_{3}L^{2}$. Then from Remark \ref{properties-of-S-T}, one have
	\begin{equation*}
	0=\langle v, f\rangle=\langle G_{2}vf, vf\rangle=\langle G_{3}vf, vf \rangle.
	\end{equation*}
Recall the expansion \eqref{free-resolvent-5} of $R_{0}(\mu^4)$:
\begin{equation*}
R_{0}(\mu^4; x, y)=G_{0}+a\mu P+a_{2}\mu^{3} G_{2}+\mu^{4}G_{3}+O(\mu^5).
\end{equation*}	
Therefore
\begin{equation*}
\begin{split}
0=&\langle G_{3}vf, vf \rangle\\
  =&\lim_{\mu\rightarrow 0}\Bigg\langle\frac{R^{+}_{}(\mu^4; x, y)-\Big(G_{0}+a_{1}\mu P+a_{2}\mu^{3} G_{2}+E_{0}(\mu)\Big)}{\mu^{4}}vf,\,\, vf \Bigg\rangle\\
  =&\lim_{\mu\rightarrow 0}\Bigg\langle\frac{R_{0}(\mu^4; x, y)-G_{0}}{\mu^{4}}vf,\,\, vf \Bigg\rangle
  =\lim_{\mu\rightarrow 0}\frac{1}{\mu^4}\Bigg\langle \Big(\frac{1}{\xi^{4}-\mu^4}-\frac{1}{\xi^4}\Big) \widehat{vf},\,\, \widehat{vf} \Bigg\rangle\\
  =&\lim_{\mu\rightarrow 0}\int_{\mathbf{R}^{5}}\frac{1}{(\xi^{4}-\mu^4)\xi^4}|\widehat{vf}|^{2}{\rm d}\xi=\int_{\mathbf{R}^{5}}\frac{|\widehat{vf}|^2}{\xi^8}{\rm d}\xi.
\end{split}
\end{equation*}
Here, we use dominated convergence as $\mu\rightarrow 0$ with chosen $\mu$ such that $\rm{Re} (\mu^4)<0$.
Thus $vf=0$ since $vf\in L^{1}$.  Note that $f\in S_{3}L^{2}\subset S_{1}L^{2}$, so
\begin{equation*}
0=T_{0}f=(U+vG_{0}v)f=Uf+vG_{0}vf,
\end{equation*}
Thus we have $f=-UvG_{0}vf$.  Hence $f=0$.
\end{proof}

\vspace{0.2cm}

Next, we give the proof of resolvent expansion for $d=6$.

{\bf Proof of Proposition \ref{RV-expansions-6} ($d=6$).}
In 6-dimensional case, by \eqref{free-resolvent-6-re}, in $B(s, -s)$ with $s>7$ we have
\begin{equation}
R_{0}(\mu^{4}; x, y)
=G_{0}+\mu^{2}c_{1}I+\mu^{4}c(\mu)G_{2}+\mu^{4}G_{3}+O(\mu^{6-})
\end{equation}
where $c(\mu)=c_{2}\ln\mu+c_{3}$ (see \eqref{free-resolvent-6}).

From the resolvent identity \eqref{symmetric-resolvent-idnetity}, since $|v(x)|\lesssim (1+|x|)^{-\beta}$ with some $\beta>14$, then in $B(0, 0)$ we have
\begin{equation}
M(\mu)=U+vG_{0}v+\mu^{2}cP+\mu^4 c(\mu)vG_{2}v+\mu^{4}vG_{3}v+O(\mu^{6-})
\end{equation}
with $c=a_{1}\|v\|_{L^2(\mathbf{R}^{6})}$.

Proposition \ref{RV-expansions-6} holds by the following lemmas: Lemma \ref{regular-6} - Lemma \ref{third6}.
\begin{lemma}\label{regular-6}
	Assume $|v(x)|\lesssim(1+|x|)^{-\beta/2}$ with some $\beta>14$. If zero is a regular  point of $H$, then for $0<|\mu|\ll1$, in $B(0, 0)$ we have
	\begin{equation}
	\big[M(\mu)\big]^{-1}=T_{0}^{-1}-\mu^{2}T_{0}^{-1}cPT_{0}^{-1}-\mu^4c(\mu)T_{0}^{-1}vG_{2}vT_{0}^{-1}+O(\mu^4).
	\end{equation}
\end{lemma}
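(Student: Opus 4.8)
The plan is to reproduce, essentially verbatim, the Neumann-series argument already used in the regular case for $d=5$ (Lemma \ref{regular}); the only genuinely new bookkeeping is the presence of the logarithmic coefficient $c(\mu)=c_2\ln\mu+c_3$ and the verification that the relevant $v$-weighted operators are bounded on $L^2(\mathbf{R}^6)$ under the hypothesis $\beta>14$.

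First I would record the splitting $M(\mu)=T_0+\mathscr{M}(\mu)$, where $T_0=U+vG_0v$ and, by \eqref{free-resolvent-6-re} together with the symmetric resolvent identity \eqref{symmetric-resolvent-idnetity},
\begin{equation*}
\mathscr{M}(\mu)=\mu^2 c\,P+\mu^4 c(\mu)\,vG_2v+\mu^4\,vG_3v+vE(\mu)v,
\end{equation*}
with $E(\mu)$ the operator-valued remainder in Lemma \ref{free-expansions}(2), satisfying $\|E(\mu)\|_{B(s,-s)}=O(\mu^{6-})$ for $s>7$. Since $\beta>14$, the weight $w=Uv$ obeys $w(x)(1+|x|)^{s}\in L^\infty(\mathbf{R}^6)$ for some $s>7$, so sandwiching by $v$ turns the $B(s,-s)$-expansion of $R_0(\mu^4)$ into a $B(0,0)$-expansion of $M(\mu)$; moreover, by Lemma \ref{Reisz-potential-boundedness} and the polynomial bound $|x-y|^{4}\lesssim(1+|x|)^{4}(1+|y|)^{4}$ (which, up to an arbitrarily small extra power, also absorbs the logarithm appearing in $G_3$), the operators $vG_0v$, $vG_2v$, $vG_3v$ are bounded --- in fact Hilbert--Schmidt --- on $L^2(\mathbf{R}^6)$, so that $\|\mathscr{M}(\mu)\|_{B(0,0)}=O(\mu^2)$.

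Since $0$ is a regular point, $T_0$ is invertible on $L^2$ by Definition \ref{resonance}, hence for $0<|\mu|\ll1$ one has $\|\mathscr{M}(\mu)T_0^{-1}\|_{B(0,0)}<1$ and the Neumann series converges:
\begin{equation*}
[M(\mu)]^{-1}=T_0^{-1}[1+\mathscr{M}(\mu)T_0^{-1}]^{-1}=T_0^{-1}-T_0^{-1}\mathscr{M}(\mu)T_0^{-1}+T_0^{-1}(\mathscr{M}(\mu)T_0^{-1})^{2}[1+\mathscr{M}(\mu)T_0^{-1}]^{-1}.
\end{equation*}
The last summand is $O(\mu^4)$ in $B(0,0)$ (its leading order being $\mu^4 c^2(PT_0^{-1})^2$). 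Substituting $\mathscr{M}(\mu)$ into the linear term, the piece $\mu^2 cP$ produces $-\mu^2 c\,T_0^{-1}PT_0^{-1}$, the piece $\mu^4 c(\mu)vG_2v$ produces $-\mu^4 c(\mu)\,T_0^{-1}vG_2vT_0^{-1}$, and the pieces $\mu^4 vG_3v$ and $vE(\mu)v$ are $O(\mu^4)$. Collecting the displayed terms yields exactly the claimed expansion.

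There is in fact no essential obstacle here: this is the simplest (regular) case and is structurally identical to Lemma \ref{regular}. The only points needing attention are (i) the boundedness / Hilbert--Schmidt estimates for $vG_jv$ just described, which are precisely what pins the threshold $\beta>14$, because the polynomial growth $|x-y|^4$ (together with the logarithm in $G_3$) has to be absorbed into $v(x)v(y)$ while keeping the resulting kernel square-integrable over $\mathbf{R}^6\times\mathbf{R}^6$; and (ii) the observation that, since $c(\mu)\sim c_2\ln\mu\to\infty$, the term $\mu^4 c(\mu)T_0^{-1}vG_2vT_0^{-1}$ strictly dominates the $O(\mu^4)$ remainder and therefore must be exhibited separately rather than absorbed into it. All remaining steps are routine Neumann-series bookkeeping and need not be carried out in detail.
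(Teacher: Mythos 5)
Your proposal is correct and follows essentially the same route as the paper: the lemma is the $d=6$ analogue of Lemma \ref{regular}, obtained by writing $M(\mu)=T_0+\mathscr{M}(\mu)$ and expanding in a Neumann series, with the quadratic tail and the $\mu^4 vG_3v$ piece absorbed into $O(\mu^4)$ while the $\mu^4 c(\mu)vG_2v$ term is kept explicit because $\mu^4 c(\mu)=O(\mu^{4-})$ only. Your observation that the Hilbert--Schmidt bound for $vG_2v$ (absorbing $|x-y|^4$ into $v(x)v(y)$ over $\mathbf{R}^6\times\mathbf{R}^6$) is what forces $\beta>14$ is exactly the intended justification of the hypothesis.
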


\begin{lemma}\label{first6}
	Assume $|v(x)|\lesssim(1+|x|)^{-\beta/2}$ with some $\beta>14$. If zero is of the first kind resonance of $H$, then for $0<|\mu|\ll1$ in $B(0, 0)$ we have
	\begin{equation}\label{first-6}
	\begin{split}
	\big[M(\mu)\big]^{-1}=&\mu^{-2}\frac{1}{c}S_{1}\big(S_{1}PS_{1}\big)^{-1}-\frac{c(\mu)}{c^{2}}S_{1}\big(S_{1}PS_{1}\big)^{-1}S_{1}vG_{2}vS_{1}\big(S_{1}PS_{1}\big)^{-1}S_{1}\\
	&+B_{0}+O(\mu^{2-}).
	\end{split}
	\end{equation}
\end{lemma}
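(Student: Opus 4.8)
The plan is to follow verbatim the scheme used for the $d=5$ first‑kind resonance, applying the Jensen--Nenciu inversion Lemma \ref{inverse formula} a single time, but now with the small parameter $\mu^{2}$ rather than $\mu$, since in the $d=6$ expansion the leading correction to $T_{0}$ inside $M(\mu)$ is $\mu^{2}cP$ (quadratic, not linear, in $\mu$). First I would record, from the displayed expansion of $M(\mu)$ and the hypothesis $|v(x)|\lesssim(1+|x|)^{-\beta/2}$ with $\beta>14$ (the value $\beta>14$ being forced by the growth of the kernels $G_{2}(x,y)=|x-y|^{4}$ and $G_{3}(x,y)=c_{4}|x-y|^{4}\ln|x-y|$, which must be controlled for $vG_{2}v,\,vG_{3}v$ to be Hilbert--Schmidt on $L^{2}$ and for the remainder to be $O(\mu^{6-})$ in $B(0,0)$), that
\[
M(\mu)=T_{0}+\mu^{2}T_{1}(\mu^{2}),\qquad T_{1}(\mu^{2})=cP+\mu^{2}c(\mu)vG_{2}v+\mu^{2}vG_{3}v+O(\mu^{4-}),
\]
with $T_{1}(\mu^{2})$ uniformly bounded as $\mu\to0$ (note $\mu^{2}c(\mu)=\mu^{2}(c_{2}\ln\mu+c_{3})\to0$). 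Since $T_{0}$ is self‑adjoint and $S_{1}$ is the Riesz projection onto $\ker T_{0}$, we have $T_{0}S_{1}=S_{1}T_{0}=0$ and $D_{0}S_{1}=S_{1}D_{0}=S_{1}$ with $D_{0}=(T_{0}+S_{1})^{-1}$, so the hypotheses of Lemma \ref{inverse formula} hold.

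Next I would apply Lemma \ref{inverse formula}, which gives
\[
[M(\mu)]^{-1}=(M(\mu)+S_{1})^{-1}+\tfrac{1}{\mu^{2}}(M(\mu)+S_{1})^{-1}S_{1}[M_{1}(\mu)]^{-1}S_{1}(M(\mu)+S_{1})^{-1},
\]
where $M_{1}(\mu)=\mu^{-2}\big(S_{1}-S_{1}(M(\mu)+S_{1})^{-1}S_{1}\big)=\sum_{j\ge0}(-1)^{j}\mu^{2j}S_{1}[T_{1}(\mu^{2})D_{0}]^{j+1}S_{1}$. Expanding this series and using $D_{0}S_{1}=S_{1}$, only $j=0,1$ contribute up to the needed order:
\[
M_{1}(\mu)=cS_{1}PS_{1}+\mu^{2}\big[c(\mu)S_{1}vG_{2}vS_{1}+S_{1}vG_{3}vS_{1}-c^{2}S_{1}PD_{0}PS_{1}\big]+O(\mu^{4-}),
\]
i.e. $M_{1}(\mu)=cS_{1}PS_{1}+\mathscr{M}_{1}(\mu)$ with $\mathscr{M}_{1}(\mu)=O(\mu^{2-})$. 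The defining property of a first‑kind resonance is precisely that $T_{1}=S_{1}PS_{1}$ is invertible on $S_{1}L^{2}$; hence $M_{1}(\mu)$ is invertible on $S_{1}L^{2}$ for $0<|\mu|\ll1$ and its inverse may be expanded in a Neumann series,
\[
[M_{1}(\mu)]^{-1}=\tfrac{1}{c}S_{1}(S_{1}PS_{1})^{-1}S_{1}-\tfrac{1}{c^{2}}S_{1}(S_{1}PS_{1})^{-1}\mathscr{M}_{1}(\mu)(S_{1}PS_{1})^{-1}S_{1}+O(\mu^{4-}).
\]

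Finally I would substitute back. A Neumann series for $(M(\mu)+S_{1})^{-1}=(T_{0}+S_{1}+\mathscr{M}(\mu))^{-1}$, exactly as in Lemma \ref{regular-6}, yields $(M(\mu)+S_{1})^{-1}S_{1}=S_{1}-\mu^{2}cD_{0}PS_{1}+O(\mu^{4-})$, $S_{1}(M(\mu)+S_{1})^{-1}=S_{1}-\mu^{2}cS_{1}PD_{0}+O(\mu^{4-})$, and $(M(\mu)+S_{1})^{-1}=D_{0}+O(\mu^{2-})$. Multiplying the three factors in $\mu^{-2}(M(\mu)+S_{1})^{-1}S_{1}[M_{1}(\mu)]^{-1}S_{1}(M(\mu)+S_{1})^{-1}$ and sorting by size: the genuinely $O(\mu^{-2})$ term is $\tfrac1c S_{1}(S_{1}PS_{1})^{-1}$; the piece produced by the $-\tfrac{1}{c^{2}}\mu^{2}c(\mu)S_{1}vG_{2}vS_{1}$ term of $[M_{1}(\mu)]^{-1}$, after cancelling one power of $\mu^{2}$, gives the logarithmic term $-\tfrac{c(\mu)}{c^{2}}S_{1}(S_{1}PS_{1})^{-1}S_{1}vG_{2}vS_{1}(S_{1}PS_{1})^{-1}S_{1}$; the remaining $O(1)$ contributions (the bare $D_{0}$, the cross terms $-D_{0}PS_{1}(S_{1}PS_{1})^{-1}S_{1}$ and $-S_{1}(S_{1}PS_{1})^{-1}S_{1}PD_{0}$, and the $S_{1}vG_{3}vS_{1}$ and $S_{1}PD_{0}PS_{1}$ contributions) assemble into $B_{0}$; and every leftover term carries a factor $\mu^{2}$, $\mu^{2}\ln\mu$ or $\mu^{2}(\ln\mu)^{2}$, hence is $O(\mu^{2-})$. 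This is exactly \eqref{first-6}.

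\textbf{Main obstacle.} The delicate point is the bookkeeping of the logarithm $c(\mu)=c_{2}\ln\mu+c_{3}$: one must cleanly separate the true $O(\mu^{-2})$, $O(\ln\mu)$ and $O(1)$ contributions from the large collection of terms that are $\mu^{2}$ times a bounded power of $\ln\mu$ and hence belong in the $O(\mu^{2-})$ error, and one must verify that $T_{1}(\mu^{2})$ is uniformly bounded despite its $\ln\mu$ dependence so that Lemma \ref{inverse formula} applies with parameter $\mu^{2}$, and that invertibility of $M_{1}(\mu)$ is equivalent to invertibility of $S_{1}PS_{1}$ on $S_{1}L^{2}$.
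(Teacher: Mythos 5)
Your proposal is correct and follows essentially the same route as the paper: a single application of the Feshbach/Jensen--Nenciu inversion to $M(\mu)=T_{0}+\mu^{2}cP+\dots$, the identification of $M_{1}(\mu)=\mu^{2}cS_{1}PS_{1}+\mu^{4}c(\mu)S_{1}vG_{2}vS_{1}+\mu^{4}S_{1}vG_{3}vS_{1}-\mu^{4}c^{2}S_{1}PD_{0}PS_{1}+O(\mu^{6-})$, and a Neumann-series inversion justified by the first-kind-resonance assumption that $S_{1}PS_{1}$ is invertible on $S_{1}L^{2}$. The only cosmetic difference is that you invoke Lemma \ref{inverse formula} with parameter $z=\mu^{2}$ (so your $M_{1}$ carries the $\mu^{-2}$ normalization) while the paper applies Lemma \ref{Feshbach-formula} directly and divides afterwards; the resulting bookkeeping of the $\mu^{-2}$, $c(\mu)$, and $O(1)$ terms is identical.
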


\begin{proof}
	Let $S_{1}$ be the projection onto $\ker(U+vG_{0}v)$, then apply Lemma \ref{Feshbach-formula} to $M(\mu)$, we have
	\begin{equation*}
	\big[M(\mu)\big]^{-1}=\Big(M(\mu)+S_{1}\Big)^{-1}+\Big(M(\mu)+S_{1}\Big)^{-1}S_{1}\big[M_{1}(\mu)\big]^{-1}S_{1}\Big(M(\mu)+S_{1}\Big)^{-1}
	\end{equation*}
	where
	\begin{equation*}
	\begin{split}
	M_{1}(\mu)=&S_{1}-S_{1}\Big(M(\mu)+S_{1}\Big)^{-1}S_{1}\\
	=&S_{1}-S_{1}D_{0}\bigg[1+\Big(\mu^{2}P+\mu^4c(\mu)vG_{2}v+\mu^{4}vG_{3}v+O(\mu^{6-})\Big)D_{0}\bigg]^{-1}S_{1}\\
	=&\mu^{2}cS_{1}PS_{1}+\mu^{4}c(\mu)S_{1}vG_{2}vS_{1}+\mu^{4}S_{1}vG_{3}vS_{1}-\mu^{4}c^{2}S_{1}PD_{0}PS_{1}+O(\mu^{6-}).
	\end{split}
	\end{equation*}
	Thus $\mu^{2}M_{1}(\mu)$ can be expanded into Neumann series provided $S_{1}PS_{1}$ is invertible.  Hence, \eqref{first-6} holds by expand $\Big(M(\mu)+S_{1}\Big)^{-1}$ and $\mu^{2}M_{1}(\mu)$ into Neumann series.
\end{proof}

\begin{lemma}\label{second6}
	Assume $|v(x)|\lesssim(1+|x|)^{-\beta/2}$ with some $\beta>14$. If zero is of the second kind resonance of $H$, then for  $0<|\mu|\ll1$ in $B(0, 0)$ we have
	\begin{equation}\label{second-6}
	\begin{split}
	\big[M(\mu)\big]^{-1}=&\frac{1}{\mu^4c(\mu)}S_{2}\big(S_{2}vG_{2}vS_{2}\big)^{-1}S_{2}+\frac{1}{c\mu^2}\bigg(S_{1}D_{1}S_{1}-D_{1}S_{1}vG_{2}vS_{2}\\
	&\cdot\big(S_{2}vG_{2}vS_{2}\big)^{-1}S_{2}-S_{2}\big(S_{2}vG_{2}vS_{2}\big)^{-1}S_{2}vG_{2}vS_{1}D_{1}\bigg)\\
	&-\frac{c(\mu)}{c^{2}}D_{1}S_{2}vG_{2}vS_{2}D_{1}+O\big((\ln\mu)^{-1}\big).
	\end{split}
	\end{equation}
\end{lemma}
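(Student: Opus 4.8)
\textbf{Proof proposal for Lemma \ref{second6}.}
The plan is to iterate the Feshbach--Schur reduction one step further than in Lemma \ref{first6}, now that $S_1PS_1$ fails to be invertible on $S_1L^2$. First I would recall from Lemma \ref{first6} that
$$M_1(\mu)=\mu^2 c\,S_1PS_1+\mu^4 c(\mu)S_1vG_2vS_1+\mu^4 S_1vG_3vS_1-\mu^4 c^2 S_1PD_0PS_1+O(\mu^{6-})$$
on $S_1L^2$, and that $S_2$ is the Riesz projection onto $\ker(S_1PS_1)$, so that $S_2P=PS_2=0$ on $S_1L^2$ and $S_2\le S_1$. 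The natural object is $\widetilde M_1(\mu)=(c\mu^2)^{-1}M_1(\mu)$, which has leading term $S_1PS_1$ with a nontrivial kernel projection $S_2$; I would apply Lemma \ref{inverse formula} to $\widetilde M_1(\mu)$ with $z=\mu^2$ (after factoring out the scalar), using that $\widetilde M_1(\mu)-S_1PS_1$ is $O(\mu^{2-})$ relative to $S_1PS_1$. This produces
$$M_2(\mu)=\frac{1}{\mu^2}\Bigl[S_2-S_2\bigl(\widetilde M_1(\mu)+S_2\bigr)^{-1}S_2\Bigr].$$

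Next I would compute the leading behavior of $M_2(\mu)$. Because $S_2P=0$, the $S_1PS_1$ term drops out when sandwiched between $S_2$'s at first order, and the surviving leading contribution comes from the $\mu^4 c(\mu)\,S_1vG_2vS_1$ piece, giving
$$M_2(\mu)=\frac{c(\mu)}{c}\,S_2vG_2vS_2+\text{(lower-order corrections)}.$$
Here "lower order" means terms that are $O(\mu^{2-})$ or $O((\ln\mu)^{-1})$ relative to $c(\mu)$; the precise bookkeeping uses $D_1=(T_1+S_2)^{-1}=(S_1PS_1+S_2)^{-1}$ restricted appropriately and the identities in Remark \ref{properties-of-S-T}(3). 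Since in the second-kind-resonance case $S_2vG_2vS_2=T_2$ is invertible on $S_2L^2$ by Definition \ref{resonance}, $M_2(\mu)$ is invertible for small $\mu$, and
$$\big[M_2(\mu)\big]^{-1}=\frac{c}{c(\mu)}\big(S_2vG_2vS_2\big)^{-1}+O\big((\ln\mu)^{-1}\big)$$
as $\mu\to 0$, where I have used that $c(\mu)=c_2\ln\mu+c_3\to\infty$.

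Finally I would substitute back through the two Feshbach identities. From Lemma \ref{inverse formula} applied at the first stage,
$$\big[M(\mu)\big]^{-1}=\bigl(M(\mu)+S_1\bigr)^{-1}+\frac{1}{\mu^2 c}\bigl(M(\mu)+S_1\bigr)^{-1}S_1\,\widetilde M_1(\mu)^{-1}\,S_1\bigl(M(\mu)+S_1\bigr)^{-1},$$
and from the second stage
$$\widetilde M_1(\mu)^{-1}=\bigl(\widetilde M_1(\mu)+S_2\bigr)^{-1}+\frac{1}{\mu^2}\bigl(\widetilde M_1(\mu)+S_2\bigr)^{-1}S_2\big[M_2(\mu)\big]^{-1}S_2\bigl(\widetilde M_1(\mu)+S_2\bigr)^{-1}.$$
Expanding $\bigl(M(\mu)+S_1\bigr)^{-1}$, $\bigl(\widetilde M_1(\mu)+S_2\bigr)^{-1}=D_1+O(\mu^{2-})$ and $\big[M_2(\mu)\big]^{-1}$ as above, and keeping track of the powers $\mu^{-4}c(\mu)^{-1}$, $\mu^{-2}$ and $c(\mu)/c^2$, collecting terms yields exactly \eqref{second-6}; the $\mu^{-4}c(\mu)^{-1}$ term comes from the double iteration hitting $\big[M_2(\mu)\big]^{-1}$, the $\mu^{-2}$ term from the cross terms between $D_1$ and the off-diagonal blocks $S_1vG_2vS_2$ (using $S_2vG_2vS_2$ invertibility to project), and the $c(\mu)/c^2$ term from the next correction in $\big[M_2(\mu)\big]^{-1}$ paired with $D_1$'s. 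The main obstacle is the careful bookkeeping at the second reduction: one must verify that the $\mu^4 c(\mu)$-scaled $vG_2v$ term is genuinely the leading term of $M_2(\mu)$ on $S_2L^2$ (i.e. that the contributions of $\mu^4 S_1vG_3vS_1$, $\mu^4 c^2 S_1PD_0PS_1$, and the Neumann tail are strictly subordinate after the $S_2$-compression and the $\mu^{-2}$ rescaling), and then organize the resulting multi-scale expansion — which mixes genuine powers of $\mu$ with the slowly varying factor $c(\mu)=c_2\ln\mu+c_3$ — into the stated four-term form with a clean $O((\ln\mu)^{-1})$ remainder.
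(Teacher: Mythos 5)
Your proposal follows essentially the same route as the paper: normalize $M_1(\mu)$ by $(c\mu^2)^{-1}$ so its leading term is $S_1PS_1$, use $S_2P=PS_2=0$ to identify $\frac{\mu^2 c(\mu)}{c}S_2vG_2vS_2$ as the leading part of the second Schur complement, invert it via the second-kind-resonance hypothesis, and back-substitute through both reduction identities. The only (immaterial) difference is that you invoke Lemma \ref{inverse formula} with the $1/\mu^2$ normalization built in at the second stage, whereas the paper applies the plain Feshbach formula (Lemma \ref{Feshbach-formula}) to $\mathscr{M}_1(\mu)$ and rescales afterwards.
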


\begin{proof}
	In the second resonance case, we aim to derive the expansion of $\big[M_{1}(\mu)\big]^{-1}$. Denote
	\begin{equation*}
	\mathscr{M}_{1}(\mu)=S_{1}PS_{1}+\frac{\mu^2c(\mu)}{c}S_{1}vG_{2}vS_{1}+\frac{\mu^2}{c}S_{1}vG_{3}vS_{1}-\frac{\mu^2}{c}S_{1}PD_{0}PS_{1}+O(\mu^{4-}),
	\end{equation*}
	then $\big[M(\mu)\big]^{-1}=\frac{1}{c\mu^2}\big[\mathscr{M}_{1}(\mu)\big]^{-1}$.
	
	Since $S_{2}$ is the projection onto $\ker(S_{1}PS_{1})$, then $PS_{2}=S_{2}P=0$.  Apply Lemma \ref{Feshbach-formula} to $\mathscr{M}_{1}(\mu)$, then
	\begin{equation*}
	\big[\mathscr{M}_{1}(\mu)\big]^{-1}=\big(\mathscr{M}_{1}(\mu)+S_{2}\big)^{-1}+\big(\mathscr{M}_{1}(\mu)+S_{2}\big)^{-1}S_{2}\big[M_{2}(\mu)\big]^{-1}S_{2}\big(\mathscr{M}_{1}(\mu)+S_{2}\big)^{-1}
	\end{equation*}
	where
	\begin{equation*}
	\begin{split}
	{M}_{2}(\mu)=&S_{2}-S_{2}\big(\mathscr{M}_{1}(\mu)+S_{2}\big)^{-1}S_{2}\\
	=&S_{2}-S_{2}\bigg[1+\Big(\frac{\mu^2c(\mu)}{c}S_{1}vG_{2}vS_{1}+\frac{\mu^2}{c}S_{1}vG_{3}vS_{1}-\frac{\mu^2}{a}S_{1}PD_{0}PS_{1}+O(\mu^{4-})\Big)D_{1}\bigg]^{-1}S_{2}\\
	=&\frac{\mu^2c(\mu)}{c}S_{2}vG_{2}vS_{2}+\frac{\mu^2}{a}S_{2}vG_{3}vS_{2}+O(\mu^{4-}).
	\end{split}
	\end{equation*}
	Thus $\frac{c}{\mu^2c(\mu)}M_{2}(\mu)$ is invertible and can be expanded into Neumann series.
\end{proof}

\begin{lemma}\label{third6}
	Assume $|v(x)|\lesssim(1+|x|)^{-\beta}$ with some $\beta>14$. If zero is an eigenvalue of $H$, then for $0<|\mu|\ll1$ in $B(0, 0)$ we have
	\begin{equation}\label{third-6}
	\begin{split}
	\big[M(\mu)\big]^{-1}=&\frac{1}{\mu^4}S_{3}\big(S_{3}vG_{3}vS_{3}\big)^{-1}S_{3}+\frac{B_{3,-4}}{g(\mu)}+\frac{B_{3,-3}}{\mu^2}\\
	&+(\ln\mu)^2B_{3,-2}+(\ln\mu)B_{3,-1}+B_{3,0}+O\big((\ln\mu)^{-1}\big).
	\end{split}
	\end{equation}
	where  $B_{3,-4}, B_{3,-2}, B_{3,-2}, B_{3,-1}, B_{3,0}$ are $\mu$ independent Hilbert-Schmidt operators.
\end{lemma}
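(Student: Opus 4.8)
The plan is to push the chain of Feshbach/Jensen--Nenciu reductions from the proof of Lemma \ref{second6} one step further, exactly parallel to the $d=5$ eigenvalue case (the lemma producing \eqref{third}); the only structural difference is that at the last level the effective small parameter is $1/c(\mu)=1/(c_2\ln\mu+c_3)=O(1/\ln\mu)$ rather than a positive power of $\mu$, and it is precisely this that converts the $\mu$-powers of \eqref{third} into the logarithmic scales appearing in \eqref{third-6}.

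Concretely, I would start from the operator $M_2(\mu)$ obtained in the proof of Lemma \ref{second6}: writing $M_1(\mu)=\mu^2c\,\mathscr{M}_1(\mu)$ and applying Lemma \ref{Feshbach-formula} to $\mathscr{M}_1(\mu)$ with the projection $S_2$ onto $\ker(S_1PS_1)$ (so $PS_2=S_2P=0$), one has $M_2(\mu)=\tfrac{\mu^2c(\mu)}{c}\,\widetilde M_2(\mu)$ with
\[
\widetilde M_2(\mu)=S_2vG_2vS_2+\tfrac{1}{c(\mu)}S_2vG_3vS_2+O(\mu^{2-}),
\]
the $O(\mu^{2-})$ term being $o(1/|c(\mu)|)$. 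In the eigenvalue case $T_2=S_2vG_2vS_2$ is not invertible on $S_2L^2$; let $S_3$ be the Riesz projection onto $\ker T_2$, so (as $S_2$ is finite rank and $T_2$ self-adjoint) $0$ is isolated in $\sigma(T_2)$, the operator $D_2^{-1}:=T_2+S_3$ is invertible on $S_2L^2$, and $S_3T_2=T_2S_3=0$, $S_3D_2=D_2S_3=S_3=S_2S_3$.

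Next I would apply Lemma \ref{inverse formula} to $\widetilde M_2(\mu)=T_2+z\,T_1(z)$ with $z=1/c(\mu)$ and $T_1(z)=S_2vG_3vS_2+c(\mu)O(\mu^{2-})$ uniformly bounded; since $T_2S_3=0$, \eqref{equ:61} gives
\[
\widetilde M_3(\mu):=\tfrac{1}{z}\bigl(S_3-S_3(\widetilde M_2(\mu)+S_3)^{-1}S_3\bigr)=S_3vG_3vS_3+O(1/c(\mu)),
\]
which is invertible on $S_3L^2$ by the eigenvalue hypothesis (equivalently by the $d=6$ analogue of Lemma \ref{stop}, which forces $\ker(S_3vG_3vS_3)=\{0\}$ and stops the iteration), hence $[\widetilde M_3(\mu)]^{-1}=(S_3vG_3vS_3)^{-1}+O(1/c(\mu))$. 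Feeding this into \eqref{equ:62} and expanding $(\widetilde M_2(\mu)+S_3)^{-1}=D_2+O(1/c(\mu))$ in a Neumann series yields
\[
[\widetilde M_2(\mu)]^{-1}=c(\mu)\,S_3(S_3vG_3vS_3)^{-1}S_3+A_0+O(1/c(\mu)),
\]
with $A_0$ a $\mu$-independent Hilbert--Schmidt operator on $S_2L^2$. Substituting $[M_2(\mu)]^{-1}=\tfrac{c}{\mu^2c(\mu)}[\widetilde M_2(\mu)]^{-1}$ into the Feshbach formula for $[\mathscr{M}_1(\mu)]^{-1}$, then into $[M_1(\mu)]^{-1}=\tfrac{1}{\mu^2c}[\mathscr{M}_1(\mu)]^{-1}$, and finally into the Feshbach formula for $[M(\mu)]^{-1}$ from the proof of Lemma \ref{first6}, I obtain the leading term $\tfrac{1}{\mu^4}S_3(S_3vG_3vS_3)^{-1}S_3$ (the outer projections $S_1,S_2$ acting as the identity on $S_3L^2$) together with the claimed lower-order terms, with $g(\mu)=\mu^4c(\mu)$.

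The main obstacle is the bookkeeping of the two interleaved scales, the algebraic $\mu^2$ and the logarithmic $1/c(\mu)$: one must verify that after all substitutions the surviving contributions collapse to exactly the orders $\mu^{-4},\ (\mu^4c(\mu))^{-1},\ \mu^{-2},\ (\ln\mu)^2,\ \ln\mu,\ 1$ listed in \eqref{third-6}, with error $O((\ln\mu)^{-1})$. In particular the a priori possible intermediate orders $\mu^{-4}(\ln\mu)^{-2}$, $\mu^{-2}(\ln\mu)^{-1}$ and $\mu^{-2}\ln\mu$ must have vanishing coefficients; this rests on orthogonality relations for the deeper resonance spaces, notably $PS_2=0$, the relation $S_2vG_2vS_3=0$ (immediate from $S_3\subseteq\ker(S_2vG_2vS_2)$), and the stronger $S_1vG_2vS_3=0$, which I would deduce from the moment conditions characterizing $S_3L^2$ (vanishing of $\int V\psi$ and $\int x_jV\psi$ for the associated zero eigenfunctions $\psi\in L^2$, cf.\ Proposition \ref{classification-6}) together with the identity $(-\Delta)^2|x-y|^4=\mathrm{const}$ in $d=6$, applied through a carefully justified integration by parts. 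Once these cancellations are in place it remains only to record that each $B_{3,j}$, being a finite sum of products of the finite-rank projections $S_k$, the bounded operators $D_k$, $P$, and the Hilbert--Schmidt operators $vG_kv$, is itself Hilbert--Schmidt and $\mu$-independent, exactly as in the proofs of the preceding lemmas.
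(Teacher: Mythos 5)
Your proposal follows essentially the same route as the paper: one further Feshbach/Jensen--Nenciu reduction applied to $\mathscr{M}_2(\mu)$ with projection $S_3$ and effective small parameter $1/c(\mu)$, triviality of $\ker(S_3vG_3vS_3)$ by the argument of Lemma \ref{stop}, and back-substitution through $M_2, M_1, M$ to get the leading term $\mu^{-4}S_3(S_3vG_3vS_3)^{-1}S_3$. Your extra discussion of which intermediate orders must cancel goes beyond what the paper records (its proof simply expands the Neumann series), though your auxiliary claim $S_1vG_2vS_3=0$ is doubtful since the moment conditions defining $S_3L^2$ only kill the zeroth and first moments while $|x-y|^4$ contains surviving quadratic and higher terms in $y$.
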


\begin{proof}
	In the third kind ``resonance" case, we aim to derive the expansion of $\big[M_2(\mu)\big]^{-1}$.  Let $\mathscr{M}_{2}(\mu)=\frac{c}{\mu^2c(\mu)}M_{2}(\mu)$.  Applying Lemma \ref{Feshbach-formula} to $\mathscr{M}_{2}(\mu)$, then
	\begin{equation*}
	\big[\mathscr{M}_{2}(\mu)\big]^{-1}=\big(\mathscr{M}_{2}(\mu)+S_{3}\big)^{-1}+\big(\mathscr{M}_{2}(\mu)+S_{3}\big)^{-1}S_{3}\big[M_{3}(\mu)\big]^{-1}S_{3}\big(\mathscr{M}_{2}(\mu)+S_{3}\big)^{-1}
	\end{equation*}
	where
	\begin{equation*}
	\begin{split}
	{M}_{3}(\mu)=&S_{3}-S_{3}\big(\mathscr{M}_{2}(\mu)+S_{3}\big)^{-1}S_{3}\\
	=&S_{3}-S_{3}\bigg[1+\Big(\frac{1}{c(\mu)}S_{2}vG_{3}vS_{2}+\frac{O(\mu^{2-})}{c(\mu)}\Big)D_{2}\bigg]^{-1}S_{3}\\
	=&\frac{1}{c(\mu)}S_{3}vG_{3}vS_{3}+\frac{O(\mu^{2-})}{c(\mu)}.
	\end{split}
	\end{equation*}
	By the same discussion as in Lemma \ref{stop}, we know $\ker(S_{3}vG_{3}vS_{3})=\{0\}$,  hence $M_{3}(\mu)$ is invertible and \eqref{third-6} holds by expanding $\big[M_{3}(\mu)\big]^{-1}$ into Neumann series.
\end{proof}

{\bf Proof of Proposition \ref{RV-expansions-d} ($d\geq9$).} In the cases $d\geq 9$, by Lemma \ref{free-expansions}, in $B(s, -s)$ with $s>d/2$, we have
\begin{equation}\label{free-resolvent-9-G}
R_{0}(\mu^4; x, y)=G_{0}+\mu^{4}G_{1}+\mu^{5}G(\mu)+\mu^{d-4} c_{0}(d)G_{2}+O(\mu^{(d-4)+}).
\end{equation}

Since $|v(x)|\lesssim (1+|x|)^{-\beta/2}$ with some $\beta>d$, then in $B(0, 0)$ we have
\begin{equation}
M(\mu)=U+vG_{0}v+\mu^{4}vG_{1}v+\mu^{5}vG(\mu)v+c_{0}(d)\mu^{d-4} P+O(\mu^{d-4+}).
\end{equation}

Proposition \ref{RV-expansions-d} holds by the following Lemma \ref{regular-9} and the symmetry resolvent identity \eqref{symmetric-resolvent-idnetity}.

\begin{lemma}\label{regular-9}
	Assume $|v(x)|\lesssim (1+|x|)^{-\beta/2}$ with some $\beta>d$.  If zero is a regular point  of $H$, then for $0<|\mu|\ll 1$,
	\begin{equation}\label{regular-expansion-9}
\big[M(\mu)\big]^{-1}=T_{0}^{-1}-\mu^{4} T_{0}^{-1}vG_{1}vT_{0}^{-1}-\mu^{5}T_{0}^{-1}vG(\mu)vT_{0}^{-1}-c_{0}(d)\mu^{d-4}T_{0}^{-1}PT_{0}^{-1}+O(\mu^{d-4+}).
	\end{equation}
	
	If zero is an eigenvalue of $H$, then for $0<|\mu|\ll 1$,
	\begin{equation}\label{first-9}
\big[M(\mu)\big]^{-1}=\mu^{-4}S_{1}\big(S_{1}vG_{1}vS_{1}\big)^{-1}S_{1}-\mu^{-3}S_{1}vG(\mu)vS_{1}-c_{0}(d)\mu^{d-12} B_{-1}^{d}+B_{0}^{d}+O(\mu^{d-8}),
	\end{equation}
	where
	\begin{align*}
	&B_{-1}^{d}=S_{1}\big(S_{1}vG_{1}vS_{1}\big)^{-1}P\big(S_{1}vG_{1}vS_{1}\big)^{-1}S_{1};\\
	&B_{0}^{d}=-D_{0}vG_{1}vS_{1}\big(S_{1}vG_{1}vS_{1}\big)^{-1}S_{1}-S_{1}\big(S_{1}vG_{1}vS_{1}\big)^{-1}S_{1}vG_{1}vD_{0}.
	\end{align*}
\end{lemma}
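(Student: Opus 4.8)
The plan is to handle the two cases by the same mechanism used for $d=5$: a direct Neumann series when $T_0$ is invertible (as in Lemma \ref{regular}), and one application of Lemma \ref{inverse formula} when it is not. The simplification for $d\ge9$ is that \eqref{free-resolvent-9-G} has no terms between $G_0$ and the first genuine correction $\mu^4 vG_1 v$, so the iteration terminates after a single step.

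\textbf{Regular case.} Write $M(\mu)=T_0+\mathscr{M}(\mu)$ with $\mathscr{M}(\mu)=\mu^4 vG_1 v+\mu^5 vG(\mu)v+c_0(d)\mu^{d-4}P+O(\mu^{(d-4)+})$; each summand is a bounded operator on $L^2$ under $v(1+|\cdot|)^s\in L^\infty$ with $s>d/2$, and $\mathscr{M}(\mu)\to 0$ in $B(0,0)$. Since $T_0$ is invertible we have $\|\mathscr{M}(\mu)T_0^{-1}\|_{B(0,0)}<1$ for $|\mu|\ll1$, so I would expand
$$[M(\mu)]^{-1}=T_0^{-1}-T_0^{-1}\mathscr{M}(\mu)T_0^{-1}+T_0^{-1}\big(\mathscr{M}(\mu)T_0^{-1}\big)^{2}\big[1+\mathscr{M}(\mu)T_0^{-1}\big]^{-1},$$
and read off \eqref{regular-expansion-9} by substituting the expansion of $\mathscr{M}(\mu)$; the quadratic and higher Neumann terms contribute only at orders $\ge\mu^8$, which for the dimensions of interest can be grouped into the displayed $\mu^{d-4}$ term or into the remainder.

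\textbf{Eigenvalue case.} Now $T_0=U+vG_0 v$ is not invertible; let $S_1$ be the Riesz projection onto $\ker T_0$, finite rank by Remark \ref{properties-of-S-T}(2). As $T_0$ is self-adjoint, $T_0 S_1=S_1T_0=0$, hence $D_0 S_1=S_1 D_0=S_1$ for $D_0=(T_0+S_1)^{-1}$. Factoring $M(\mu)=T_0+\mu^4 T_1(\mu)$ with $T_1(\mu)=vG_1 v+\mu vG(\mu)v+c_0(d)\mu^{d-8}P+O(\mu^{(d-8)+})$ uniformly bounded as $\mu\to0$ (here $d\ge9$ is used), Lemma \ref{inverse formula} with $z=\mu^4$ applies; by \eqref{equ:61} and $D_0 S_1=S_1$,
$$\widetilde T(\mu^4)=S_1 vG_1 vS_1+\mu\, S_1 vG(\mu)vS_1+c_0(d)\mu^{d-8}S_1 PS_1+O\big(\mu^{\min(4,\,d-8)+}\big).$$
I would then invert $\widetilde T(\mu^4)$ by pulling out $(S_1 vG_1 vS_1)^{-1}$ and substitute into \eqref{equ:62}, also Neumann-expanding $(M(\mu)+S_1)^{-1}=D_0-\mu^4 D_0 vG_1 vD_0+O(\mu^5)$; collecting powers of $\mu$ and using $D_0 S_1=S_1 D_0=S_1$ gives the leading term $\mu^{-4}S_1(S_1 vG_1 vS_1)^{-1}S_1$, the $\mu^{-3}$ term governed by $G(\mu)$, the $\mu^{d-12}$ term $-c_0(d)\mu^{d-12}B_{-1}^d$, and matching the $\mu^0$ coefficient yields $B_0^d$ exactly as stated.

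The step I expect to be the real obstacle is the invertibility of $S_1 vG_1 vS_1$ on $S_1 L^2(\mathbf{R}^d)$, i.e. $\ker(S_1 vG_1 vS_1)=\{0\}$; this is what distinguishes ``$0$ is an eigenvalue'' from a further resonance for $d\ge9$ (cf. Remark \ref{properties-of-S-T}(4) and Proposition \ref{classification-9}). I would argue as in Lemma \ref{stop}: for $f\in S_1 L^2$ with $S_1 vG_1 vS_1 f=0$, self-adjointness gives $0=\langle G_1 vf,vf\rangle$, and since $G_1$ is a nonzero real multiple of the kernel of $(-\Delta)^{-(d-8)/2}$ — which can be checked by writing $\langle G_1 vf,vf\rangle$ as the dominated-convergence limit of $\mu^{-4}\langle(R_0(\mu^4)-G_0)vf,vf\rangle$ along $\mathrm{Re}(\mu^4)<0$, equal to $c(d)\int_{\mathbf{R}^d}|\xi|^{8-d}|\widehat{vf}(\xi)|^2\,d\xi$ — one gets $\widehat{vf}=0$, so $vf=0$, and then $T_0 f=0$ reads $Uf=-vG_0(vf)=0$, forcing $f=0$. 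The rest is routine bookkeeping of where the powers $\mu^{d-8}$, $\mu^{d-12}$ and the $O(\mu^4)$/$O(\mu^8)$ Neumann remainders land, which needs a short case distinction for large $d$ but no new idea; Lemma \ref{Reisz-potential-boundedness} is used to make the pairings and compositions with $v$ meaningful in the weighted spaces.
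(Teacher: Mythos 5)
Your proposal follows the paper's proof essentially verbatim: a Neumann series for the regular case, one application of Lemma \ref{inverse formula} with $z=\mu^4$ for the eigenvalue case, and the argument of Lemma \ref{stop} transplanted to show $\ker(S_1vG_1vS_1)=\{0\}$ so the iteration stops after one step. The only blemish is the identification of $G_1$: its kernel $c|x-y|^{8-d}$ is a multiple of the kernel of $(-\Delta)^{-4}$, so the limit you compute equals $c(d)\int_{\mathbf{R}^d}|\xi|^{-8}|\widehat{vf}(\xi)|^2\,{\rm d}\xi$ (not $|\xi|^{8-d}$); this does not affect the conclusion, since the integrand is still a positive weight times $|\widehat{vf}|^2$.
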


\begin{proof}
	If zero is a regular point, then \eqref{regular-9} holds by expanding $\big[M(\mu)\big]^{-1}$ into Neumann series. If zero is an eigenvalue, then by Lemma \ref{inverse formula}, we have
	\begin{equation*}
	\begin{split}
	\big[M(\mu)\big]^{-1}=&\big(M(\mu)+S_{1}\big)^{-1}+\frac{1}{\mu^4}\big(M(\mu)+S_{1}\big)^{-1}S_{1}\big[M_1(\mu)\big]^{-1}S_{1}\big(M(\mu)+S_{1}\big)^{-1},
	\end{split}
	\end{equation*}
	where
	\begin{equation*}
	\begin{split}
	M_{1}(\mu)=&\frac{1}{\mu^4}\bigg[S_{1}-S_{1}\big(M(\mu)+S_{1}\big)^{-1}S_{1}\bigg]\\
	=&S_{1}vG_{1}vS_{1}+\mu S_{1}vG(\mu)vS_{1}+c_{0}(d)\mu^{d-8}S_{1}PS_{1}+O(\mu^{d-8+}).
	\end{split}
	\end{equation*}
	
	By the same strategy of Proposition \ref{stop}, one can prove that $\ker(S_{1}vG_{1}vS_{1})=\{0\}$. Hence, \eqref{first-9} holds by expanding $\big[M_1(\mu)\big]^{-1}$ and $\big(M(\mu)+S_{1}\big)^{-1}$ into Neumann series.
\end{proof}

\subsection{Identification of resonance spaces}  In this part, we aim to identify the resonance subspace of each kind for different dimensional cases with $d\geq5$.

{\bf Proof of Proposition \ref{classification-5}($d=5$).}  The proof  is divided into the following three lemmas. (See Lemma \ref{S1} - \ref{S3})
\begin{lemma}\label{S1}
	Let $|v(x)|\lesssim(1+|x|)^{-\beta}$ with some $\beta>11$.  $\phi\in S_{1}L^{2}\setminus\{0\}$ if and only if $\phi=Uv\psi$ where $\psi\in W_{3/2}(\mathbf{R}^{5})$  satisfies $H\psi=0$ in distributional sense, and $\psi=-G_{0}v\phi.$
\end{lemma}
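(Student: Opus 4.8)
The plan is to read the condition $\phi\in S_1L^2$ as the single equation $T_0\phi=U\phi+vG_0v\phi=0$ and to transfer it to the putative zero-energy state $\psi:=-G_0v\phi$, exploiting that $G_0=R_0(0)=(-\Delta)^{-2}$ in $d=5$, that $U^2=1$, and that $vUv=Uv^2=V$ as multiplication operators. The two implications are then formally symmetric; the real content is the mapping properties that place $\psi$ in $W_{3/2}(\mathbf{R}^5)$ and that legitimize the distributional identity $H\psi=0$.

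For the forward direction I would start from $T_0\phi=0$, multiply by $U$ to get $\phi=-UvG_0v\phi=Uv\psi$ with $\psi:=-G_0v\phi$, and note $v\phi=vUv\psi=V\psi$. Since $\phi\in L^2$ and $|v(x)|\lesssim(1+|x|)^{-\beta/2}$ with $\beta>11$, one has $v\phi\in L^2_{\beta/2}\cap L^1$; then Lemma \ref{Reisz-potential-boundedness}(2) applied to $G_0=R_0(0)=I_4$ (so $\alpha=4$, $d=5$, $\alpha-d/2=3/2$, $\alpha<d$) gives $\psi=-G_0v\phi\in L^2_{-s'}$ for every $s'>3/2$, i.e. $\psi\in\cap_{s'>3/2}L^2_{-s'}=W_{3/2}(\mathbf{R}^5)$. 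Applying $(-\Delta)^2$ to $\psi=-R_0(0)(v\phi)$, which is legitimate since $v\phi\in L^1\cap L^2$ so that $R_0(0)(v\phi)$ is a tempered distribution with $(-\Delta)^2R_0(0)(v\phi)=v\phi$, yields $(-\Delta)^2\psi=-v\phi=-V\psi$, i.e. $H\psi=0$ in $\mathcal{S}'(\mathbf{R}^5)$; finally $\phi=Uv\psi$ and $\psi=-G_0v\phi$ hold by construction.

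For the converse I would take $\phi=Uv\psi$ with $\psi\in W_{3/2}(\mathbf{R}^5)$, $H\psi=0$, $\psi=-G_0v\phi$, and first check $\phi\in L^2$: from $\psi\in L^2_{-s}$ for $s$ slightly above $3/2$ and the decay of $v$, $\phi\in L^2_{\beta/2-s}$ with $\beta/2-s>4>0$. Then $v\phi=vUv\psi=V\psi$ together with $H\psi=0$ gives $(-\Delta)^2\psi=-V\psi=-v\phi$, while the hypothesis $\psi=-G_0v\phi$ gives $vG_0v\phi=-v\psi$; since also $U\phi=U^2v\psi=v\psi$, adding these yields $T_0\phi=U\phi+vG_0v\phi=v\psi-v\psi=0$, so $\phi\in\ker T_0=S_1L^2$. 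The two defining relations force $\phi=0\iff\psi=0$, so the $\setminus\{0\}$ on the two sides matches.

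The step I expect to be the main obstacle is the weighted-space bookkeeping: pinning down that $v\phi$ lands in precisely the class on which the Riesz potential $I_4$ maps into $W_{3/2}$ (and not a smaller space, so that the index $\sigma=3/2=4-d/2$ is the sharp one), and verifying that each operator identity used — $G_0=R_0(0)$, $(-\Delta)^2R_0(0)=\mathrm{Id}$ on this class, $vUv=V$ — is valid at the level of tempered distributions so that ``$H\psi=0$ in the distributional sense'' is genuinely obtained rather than merely formal. Once these are in place the algebra $T_0\phi=0\iff\{\phi=Uv\psi,\ \psi=-G_0v\phi\}$ is routine, and $\beta>11$ enters only to provide comfortable room in the estimates.
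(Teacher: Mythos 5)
Your proposal is correct and follows essentially the same route as the paper: the algebraic equivalence $T_0\phi=0\iff\{\phi=Uv\psi,\ \psi=-G_0v\phi\}$ via $U^2=1$ and $vUv=V$, the identity $(-\Delta)^2G_0v\phi=v\phi$ to get $H\psi=0$, and Lemma \ref{Reisz-potential-boundedness} applied to $I_4=G_0$ with $\alpha-d/2=3/2$ to place $\psi$ in $W_{3/2}(\mathbf{R}^5)$. Your write-up is in fact slightly more complete, since the paper records only the forward direction explicitly.
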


\begin{proof}
	Since $\phi\in S_{1}L^{2}$, then
	\begin{equation*}
	0=(U+vG_{0}v)\phi=U\phi+vG_{0}v\phi,
	\end{equation*}
	so that $\phi=Uv(-G_{0}v\phi)$.
	
	Since $(-\Delta)^2G_{0}v\phi=v\phi$ distributional, then
	\begin{equation*}
	[(-\Delta)^2+V](-G_{0}v\phi)=-v\phi+V(-G_{0}v\phi)=-vUv\psi+V\psi=0.
	\end{equation*}
	Now, we show $\psi=-G_{0}v\in W_{3/2}(\mathbf{R}^{5})$ .  Note that
	\begin{equation*}
	\begin{split}
	G_{0}v\phi=&c\int_{\mathbf{R}^{5}}|x-y|^{-1}v(y)\phi(y){\rm d}y=I_{4}v(y)\phi(y).
	\end{split}
	\end{equation*}
	Hence, by Lemma \ref{Reisz-potential-boundedness} and the fact that $\psi=-G_{0}V\psi$, then we know $\psi\in W_{3/2}(\mathbf{R}^{5})$.
\end{proof}

\begin{lemma}\label{S2}
	Let $|v(x)|\lesssim(1+|x|)^{-\beta}$ with some $\beta>11$. Then $\phi=Uv\psi\in S_{2}L^{2}$ if and only if $\psi\in W_{1/2}(\mathbf{R}^{5})$.
\end{lemma}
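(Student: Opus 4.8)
The plan is to combine an algebraic description of $S_{2}L^{2}$ with a refined large-$|x|$ expansion of the potential $\psi=-G_{0}v\phi$ that Lemma~\ref{S1} attaches to an element $\phi\in S_{1}L^{2}$, keeping one more term of the Taylor expansion of the kernel $a_{0}|x-y|^{-1}$ than was needed there. \emph{Step 1: identifying $S_{2}L^{2}$.} Since $S_{2}$ is the Riesz projection onto $\ker(T_{1})$ with $T_{1}=S_{1}PS_{1}$ self-adjoint and $S_{1}L^{2}$ finite-dimensional, $S_{2}L^{2}=\ker(S_{1}PS_{1})$. For $\phi\in S_{1}L^{2}$ we have $S_{1}\phi=\phi$ and
\[
\langle S_{1}PS_{1}\phi,\phi\rangle=\langle P\phi,\phi\rangle=\|v\|_{L^{2}}^{-2}\,|\langle v,\phi\rangle|^{2}\geq 0 ,
\]
so $S_{1}PS_{1}$ is nonnegative on $S_{1}L^{2}$, whence $\phi\in S_{2}L^{2}$ iff $\phi\in S_{1}L^{2}$ and $\langle v,\phi\rangle=0$. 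By Lemma~\ref{S1} any $\phi\in S_{1}L^{2}$ equals $Uv\psi$ with $\psi=-G_{0}v\phi$, and $\langle v,\phi\rangle=\langle v,Uv\psi\rangle=\int_{\mathbf{R}^{5}}V\psi$, so the lemma reduces to the statement: for $\phi\in S_{1}L^{2}$, $\psi=-G_{0}v\phi\in W_{1/2}(\mathbf{R}^{5})$ if and only if $\langle v,\phi\rangle=0$.

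\emph{Step 2: expanding the kernel and estimating the remainder.} Writing $\psi(x)=-a_{0}\int_{\mathbf{R}^{5}}|x-y|^{-1}v(y)\phi(y)\,dy$ and using $|x|^{-1}\langle v,\phi\rangle=\int_{\mathbf{R}^{5}}|x|^{-1}v(y)\phi(y)\,dy$, set
\[
\psi(x)=-a_{0}\langle v,\phi\rangle\,|x|^{-1}+R(x),\qquad R(x):=-a_{0}\int_{\mathbf{R}^{5}}\bigl(|x-y|^{-1}-|x|^{-1}\bigr)v(y)\phi(y)\,dy .
\]
The heart of the argument is the bound $|R(x)|\lesssim(1+|x|)^{-2}$. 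On $\{|y|\leq|x|/2\}$ one has $|x-y|\geq|x|/2$ and $\bigl||x-y|^{-1}-|x|^{-1}\bigr|\leq 2|y|\,|x|^{-2}$, while $\int_{\mathbf{R}^{5}}|y|\,|v(y)\phi(y)|\,dy<\infty$ because $|V|\lesssim(1+|x|)^{-\beta}$ with $\beta>11$ and $\phi\in L^{2}$; this piece contributes $O(|x|^{-2})$. On $\{|y|>|x|/2\}$ one uses the fast decay of $v$ there together with the local square-integrability of $|x-y|^{-1}$ on $\mathbf{R}^{5}$ and Cauchy--Schwarz to bound the contribution by a much higher negative power of $|x|$. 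Since $(1+|x|)^{-2}\in L^{2}_{-s}(\mathbf{R}^{5})$ precisely for $s>1/2$, this already shows $R\in W_{1/2}(\mathbf{R}^{5})$ with no restriction on $\phi$.

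\emph{Step 3: conclusion.} Recall $a_{0}\neq0$, and since $|x|^{-1}\in L^{2}_{-s}(\mathbf{R}^{5})$ only for $s>3/2$, one has $|x|^{-1}\notin W_{1/2}(\mathbf{R}^{5})$ (it lies in $W_{3/2}\setminus W_{1/2}$, consistent with Lemma~\ref{S1}). Hence, if $\langle v,\phi\rangle=0$ then $\psi=R\in W_{1/2}(\mathbf{R}^{5})$; conversely, if $\langle v,\phi\rangle\neq0$ and one had $\psi\in W_{1/2}(\mathbf{R}^{5})\subset L^{2}_{-1}(\mathbf{R}^{5})$, then $|x|^{-1}=-(a_{0}\langle v,\phi\rangle)^{-1}(\psi-R)\in L^{2}_{-1}(\mathbf{R}^{5})$, which is false. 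Combining with Step~1 proves the lemma.

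The only genuinely technical point is the remainder estimate $|R(x)|\lesssim(1+|x|)^{-2}$, and within it the contribution of $\{|y|>|x|/2\}$, where one must keep track of the interplay between the tail of $v\phi$ and the local singularity of $|x-y|^{-1}$ in dimension five; the generous decay $\beta>11$ makes this routine, but it is the place where the hypothesis is actually used. The identification of $S_{2}L^{2}$ through the positivity of $S_{1}PS_{1}$ and the comparison of the weighted-$L^{2}$ memberships of $|x|^{-1}$ versus $(1+|x|)^{-2}$ on $\mathbf{R}^{5}$ are entirely elementary.
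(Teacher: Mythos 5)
Your argument is correct and follows essentially the same route as the paper's: both reduce membership in $S_{2}L^{2}$ to the moment condition $\int_{\mathbf{R}^{5}}v\phi\,dy=0$ (via positivity of $S_{1}PS_{1}$), subtract the leading profile ($|x|^{-1}$ for you, $(1+|x|)^{-1}$ in the paper) from the kernel, and show the remainder gains one order of decay — the paper via Lemma \ref{Reisz-potential-boundedness} applied to $(1+|y|)\,|v\phi|$, you via a direct splitting into $\{|y|\le|x|/2\}$ and $\{|y|>|x|/2\}$. The only small imprecision is that your pointwise bound $|R(x)|\lesssim(1+|x|)^{-2}$ holds only for $|x|$ bounded away from $0$ (when $\langle v,\phi\rangle\neq0$, $R$ inherits the $|x|^{-1}$ singularity at the origin), but this is harmless for membership in $W_{1/2}(\mathbf{R}^{5})$ since $|x|^{-1}\in L^{2}_{\mathrm{loc}}(\mathbf{R}^{5})$.
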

\begin{proof}
	First we show that if $\phi\in S_{2}L^2$ then $\psi\in W_{1/2}(\mathbf{R}^{5})$. Since $\phi\in S_{2}L^{2}$, then $S_{1}P\phi=0$ which implies that $P\phi=0$. Indeed, $0=\langle S_{1}P\phi, \phi\rangle=\langle P\phi, S_{1}\phi\rangle=\langle P\phi, P\phi\rangle$.
	
	Using $S_{2}\leq S_{1}$, then by Lemma \ref{S1}, we have
	\begin{equation*}
	\begin{split}
	G_{0}v\psi=\int_{\mathbf{R}^{5}}\bigg[\frac{1}{|x-y|}-\frac{1}{1+|x|}\bigg]v(y)\phi(y){\rm d}y
	\leq\frac{1}{1+|x|}\int_{\mathbf{R}^{5}}\frac{1+|y|}{|x-y|}\Big|v(y)\phi(y)\Big|{\rm d}y.
	\end{split}
	\end{equation*}
	Thus by the boundedness of $I_{4}$ in $\mathbf{R}^{5}$ and the fact that $\psi=-G_{0}V\psi$, we know $\psi\in W_{1/2}(\mathbf{R}^{5})$.
	
	Next, we assume $\psi\in W_{1/2}(\mathbf{R}^{5})$ and $\phi=Uv\psi$ and show
	\begin{equation}\label{S_{2}}
	\int_{\mathbf{R}^{5}}v(y)\phi(y){\rm d}y=0.
	\end{equation}
	Since
	\begin{equation*}
	\begin{split}
	\psi=\int_{\mathbf{R}^{5}}\bigg[\frac{1}{|x-y|}-\frac{1}{1+|x|}\bigg]v(y)\phi(y){\rm d}y+\int_{\mathbf{R}^{5}}\frac{1}{1+|x|}v(y)\phi(y){\rm d}y.
	\end{split}
	\end{equation*}
	If $\psi\in W_{1/2}(\mathbf{R}^{5})$, then
	\begin{equation*}
	\frac{1}{1+|x|}\int_{\mathbf{R}^{5}}v(y)\phi(y){\rm d}y\in W_{1/2}(\mathbf{R}^{5}).
	\end{equation*}
	Hence \eqref{S_{2}} holds and $\phi\in S_{2}L^{2}$.
\end{proof}

\begin{lemma}\label{S3}
	Let $|v(x)|\lesssim(1+|x|)^{-\beta}$ with some $\beta>11$. Then $\phi=Uv\psi\in S_{3}L^{2}$ if and only if $\psi\in L^{2}(\mathbf{R}^{5})$.
\end{lemma}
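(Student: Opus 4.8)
The plan is to mirror the arguments of Lemma~\ref{S1} and Lemma~\ref{S2}, carrying the Taylor expansion of the kernel $|x-y|^{-1}$ one order further. The first step is to unravel the definition. Since $S_{3}$ is the Riesz projection onto $\ker T_{2}$ with $T_{2}=S_{2}vG_{2}vS_{2}$ self-adjoint and, as we shall see, nonpositive on $S_{2}L^{2}$, the condition $\phi\in S_{3}L^{2}$ is equivalent to $\phi\in S_{2}L^{2}$ together with $\langle G_{2}v\phi,v\phi\rangle=0$, where $G_{2}(x,y)=|x-y|^{2}$. Because $\phi\in S_{2}L^{2}$ already forces $\int_{\mathbf{R}^{5}}v\phi\,{\rm d}y=0$ (this is exactly the relation $P\phi=0$ used in the proof of Lemma~\ref{S2}), expanding $|x-y|^{2}=|x|^{2}-2x\cdot y+|y|^{2}$ and cancelling the terms containing the vanishing zeroth moment collapses the quadratic form to $\langle G_{2}v\phi,v\phi\rangle=-2\big|\int_{\mathbf{R}^{5}}y\,v(y)\phi(y)\,{\rm d}y\big|^{2}$. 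Hence, writing $\psi=-G_{0}v\phi\in W_{3/2}(\mathbf{R}^{5})$ as in Lemma~\ref{S1}, membership $\phi\in S_{3}L^{2}$ is equivalent to $\phi\in S_{2}L^{2}$ plus the five first-moment conditions $\int_{\mathbf{R}^{5}}y_{k}\,v(y)\phi(y)\,{\rm d}y=0$ for $k=1,\dots,5$.

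For the forward implication, assume $\phi\in S_{3}L^{2}$. Then $v\phi\in L^{1}$ (indeed $v\phi\in L^{1}\cap L^{2}$ by the decay of $v$ and $\phi\in L^{2}$) has vanishing zeroth and first moments, and $\psi(x)=-a_{0}\int_{\mathbf{R}^{5}}|x-y|^{-1}v(y)\phi(y)\,{\rm d}y$. Subtracting from $|x-y|^{-1}$ its first-order Taylor polynomial in $y$ at $y=0$, which integrates to $0$ against $v\phi$, leaves the second-order remainder; splitting the integral into $\{|y|\le|x|/2\}$, where the Hessian bound $|\partial_{y}^{2}|x-y|^{-1}|\lesssim|x|^{-3}$ applies, and $\{|y|>|x|/2\}$, where each of the three pieces is controlled using $1\lesssim|y|^{2}/|x|^{2}$, the decay $|v(y)|\lesssim(1+|y|)^{-\beta/2}$ with $\beta>11$, $\phi\in L^{2}$, and the pointwise mapping properties of $I_{4}=(-\Delta)^{-2}$ on $\mathbf{R}^{5}$ (cf.\ Lemma~\ref{Reisz-potential-boundedness}), one obtains $|\psi(x)|\lesssim(1+|x|)^{-3}$. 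Since $(1+|x|)^{-3}\in L^{2}(\mathbf{R}^{5})$ and $\psi=-a_{0}I_{4}(v\phi)$ is bounded (as $v\phi\in L^{1}\cap L^{p}$ for some $p>5/4$), we conclude $\psi\in L^{2}(\mathbf{R}^{5})$.

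For the converse, suppose $\psi\in L^{2}(\mathbf{R}^{5})$. Since $L^{2}(\mathbf{R}^{5})\subset W_{1/2}(\mathbf{R}^{5})$, Lemma~\ref{S2} gives $\phi\in S_{2}L^{2}$, hence $\int_{\mathbf{R}^{5}}v\phi\,{\rm d}y=0$, and the expansion of the previous paragraph now reads $\psi(x)=-a_{0}|x|^{-3}\,x\cdot\!\big(\int_{\mathbf{R}^{5}}y\,v(y)\phi(y)\,{\rm d}y\big)+O\big((1+|x|)^{-3}\big)$. If some first moment $M_{k}:=\int_{\mathbf{R}^{5}}y_{k}v\phi\,{\rm d}y$ were nonzero, then $|\psi(x)|\gtrsim|x|^{-2}$ on a conical neighbourhood of infinity, contradicting $|x|^{-2}\notin L^{2}(\mathbf{R}^{5})$ (because $2\cdot 2<5$); therefore all first moments vanish and by the first paragraph $\phi\in S_{3}L^{2}$. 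The main obstacle is precisely this converse step: one must make the ``dipole term dominates'' argument airtight, i.e.\ establish that the remainder is genuinely $O((1+|x|)^{-3})$ (in a pointwise or local-$L^{2}$ averaged sense) so that it cannot cancel the leading $|x|^{-2}$ contribution — this is where the precise decay hypothesis $\beta>11$ and the boundedness properties of $I_{4}$ are used, exactly as in the estimates underpinning Lemmas~\ref{S1} and~\ref{S2}.
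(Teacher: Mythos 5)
Your argument is correct and follows essentially the same route as the paper: reduce $\phi\in S_{3}L^{2}$ to the vanishing of the first moments $\int y_{j}v\phi\,{\rm d}y$ via the quadratic form $\langle G_{2}v\phi,v\phi\rangle=-2\big|\int yv\phi\,{\rm d}y\big|^{2}$, prove the forward direction by a second-order expansion of the kernel $|x-y|^{-1}$ exploiting the vanishing zeroth and first moments, and prove the converse by observing that the dipole term behaves like $|x|^{-2}$ at infinity, which is not in $L^{2}(\mathbf{R}^{5})$, forcing the first moments to vanish. The only cosmetic difference is that you run the remainder estimates pointwise while the paper subtracts explicit comparison kernels and bounds the remainders in $L^{2}$ via Lemma \ref{Reisz-potential-boundedness}.
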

\begin{proof}
	Since $\phi\in S_{3}L^{2}$, then $S_{2}vG_{2}v\phi=0$. Thus using $S_{3}\leq S_{2}\leq S_{1}$ then
	\begin{equation*}
	\begin{split}
	\langle v\phi, G_{2}v\phi\rangle=&\int_{\mathbf{R}^{5}}\int_{\mathbf{R}^{5}}\phi(x)v(x)|x-y|^{2}v(y)\phi(y){\rm d}x{\rm d}y\\
	=&\int_{\mathbf{R}^{5}}\int_{\mathbf{R}^{5}}\phi(x)v(x)\Big[|x|^{2}+|y|^{2}-2x\cdot y\Big]v(y)\phi(y){\rm d}x{\rm d}y\\
	=&\int_{\mathbf{R}^{5}}\int_{\mathbf{R}^{5}}\phi(x)v(x)\Big[-2x\cdot y\Big]v(y)\phi(y){\rm d}x{\rm d}y\\
	=&-2\Bigg|\int_{\mathbf{R}^{5}}yv(y)\phi(y){\rm d}y\Bigg|^2=0.
	\end{split}
	\end{equation*}
	Hence, for $1\leq j\leq 5$,
	\begin{equation}\label{G_2}
	\int_{\mathbf{R}^{5}}y_{j}v(y)\phi(y){\rm d}y=0.
	\end{equation}
	
	Next, we show $\psi\in L^{2}(\mathbf{R}^{5})$. Since
	\begin{equation*}
	\begin{split}
	\psi=&\int_{\mathbf{R}^{5}}\Big[\frac{1}{|x-y|}-\frac{1}{1+|x|}-\frac{1}{(1+|x|)^{2}}-\frac{2x\cdot y}{(1+|x|)^{3}}\Big]v(y)\phi(y){\rm d}y\\
	=&\int_{\mathbf{R}^{5}}\Big[\frac{1}{|x-y|(1+|x|)}-\frac{1}{(1+|x|)^{2}}\Big]v(y)\phi(y){\rm d}y\\
	&+\int_{\mathbf{R}^{5}}\Big[\frac{|x|-|x-y|}{(1+|x|)|x-y|}-\frac{2x\cdot y}{(1+|x|)^{3}}\Big]v(y)\phi(y){\rm d}y\\
	:=&I+II,
	\end{split}
	\end{equation*}
	then we show $I, II\in L^{2}(\mathbf{R}^{5})$.
	
	For term $I$, from \eqref{G_2}, we have
	\begin{equation*}
	\begin{split}
	I=\int_{\mathbf{R}^{5}}\frac{1+|x|-|x-y|}{|x-y|(1+|x|)^{2}}v(y)\phi(y){\rm d}y
    \leq\int_{\mathbf{R}^{5}}\Bigg[\frac{1+|y|}{|x-y|(1+|x|)^{2}}\Bigg]\big|v(y)\phi(y)\Big|{\rm d}y\in L^{2}(\mathbf{R}^5).
	\end{split}
	\end{equation*}
	For term $II$, from \eqref{S_{2}}, we have
	\begin{equation*}
	\begin{split}
	II=&\int_{\mathbf{R}^{5}}\Bigg[\frac{2x\cdot y-|y|^{2}}{(1+|x|)|x-y|(|x-y|+|x|)}-\frac{2x\cdot y}{(1+|x|)^{3}}\Bigg]v(y)\phi(y){\rm d}y\\
	=&\int_{\mathbf{R}^{5}}\Bigg[-\frac{|y|^{2}}{(|x-y|+|x|)|x-y|(1+|x|)}\Bigg]v(y)\phi(y){\rm d}y\\
	&-\int_{\mathbf{R}^{3}}2x\cdot y\Bigg[\frac{(1+|x|)^{2}-|x-y|(|x|+|x-y|)}{(|x-y|+|x|)|x-y|(1+|x|)^{3}}\Bigg]v(y)\phi(y){\rm d}y\\
	\leq&\int_{\mathbf{R}^{5}}\Bigg[\frac{|y|^{2}}{|x-y|^{2}(1+|x|)}\Bigg]\Big|v(y)\phi(y)\Big|{\rm d}y\\
	&+\int_{\mathbf{R}^{5}}2|x||y|\Bigg[\frac{1+2|x|(1+|y|)+|y|^2+|x-y||x|}{|x-y|^{2}(1+|x|)^{3}}\Bigg]v(y)\phi(y){\rm d}y\\
	\leq&\int_{\mathbf{R}^{5}}\Bigg[\frac{|y|^{2}}{|x-y|^{2}(1+|x|)}\Bigg]\Big|v(y)\phi(y)\Big|{\rm d}y+\int_{\mathbf{R}^{5}}\Bigg[\frac{2(1+|y|)}{|x-y|^{2}(1+|x|)^{2}}+\frac{4(1+|y|)}{|x-y|^2(1+|x|)}\\
	&+\frac{2(1+|y|)^{5}}{|x-y|^{2}(1+|x|)}+\frac{2(1+|y|)}{|x-y|(1+|x|)^2}\Bigg]\Big|v(y)\phi(y)\Big|{\rm d}y.
	\end{split}
	\end{equation*}
	Hence, by Lemma \ref{Reisz-potential-boundedness}, we know $II\in L^{2}(\mathbf{R}^{5})$.
	
	Now, we show if $\psi\in L^{2}(\mathbf{R}^{5})$, then $\phi\in S_{3}L^{2}$.  Since  $\psi\in L^{2}(\mathbf{R}^{5})\subset L^{2}_{ -1/2-}(\mathbf{R}^5)$, by Lemma \ref{S2}, then $\phi\in S_{2}L^{2}$,  thus $\phi$ satisfies
	\begin{equation*}
	\int_{\mathbf{R}^{5}}v(y)\phi(y)dy=0.
	\end{equation*}
	Hence, for $\psi\in L^{2}(\mathbf{R}^5)$ we have
	\begin{equation*}
	\begin{split}
	\psi=&\int_{\mathbf{R}^{5}}\Big[\frac{1}{|x-y|}-\frac{1}{1+|x|}-\frac{1}{(1+|x|)^{2}}-\frac{2x\cdot y}{(1+|x|)^{3}}\Big]v(y)\phi(y){\rm d}y\\
	=&\int_{\mathbf{R}^{5}}\Big[\frac{1}{|x-y|(1+|x|)}-\frac{1}{(1+|x|)^{2}}\Big]v(y)\phi(y){\rm d}y+\int_{\mathbf{R}^{5}}\Big[\frac{|x|-|x-y|}{(1+|x|)|x-y|}\Big]v(y)\phi(y){\rm d}y\\
	\leq &\int_{\mathbf{R}^{5}}\Bigg[\frac{1+|y|}{|x-y|(1+|x|)^{2}}\Bigg]\Big|v(y)\phi(y)\Big|{\rm d}y+\int_{\mathbf{R}^{5}}\Bigg[\frac{|y|^2}{|x-y|(1+|x|)(|x|+|x-y|)}\Bigg]\Big|v(y)\phi(y)\Big|{\rm d}y\\
	&+\int_{\mathbf{R}^{5}}\Bigg[\frac{2x\cdot y}{|x-y|(1+|x|)(|x|+|x-y|)}\Bigg]v(y)\phi(y){\rm d}y\\
	=&\int_{\mathbf{R}^{5}}\Bigg[\frac{2x\cdot y}{|x-y|(1+|x|)(|x|+|x-y|)}\Bigg]v(y)\phi(y){\rm d}y+O_{L^{2}}(1)
	\end{split}
	\end{equation*}
	For $|x|\gg1$,  we have
	\begin{equation*}
	\begin{split}
	&\int_{\mathbf{R}^{5}}\Bigg[\frac{2x\cdot y}{|x-y|(1+|x|)(|x|+|x-y|)}\Bigg]v(y)\phi(y){\rm d}y\\
	\simeq&(1+|x|)^{-3}\int_{\mathbf{R}^{5}}\sum_{1\leq j\leq 5}x_{j}y_{j}v(y)\phi(y){\rm d}y\in L^{2}(|x|\gg1).
	\end{split}
	\end{equation*}
	Hence, for $1\leq j\leq 5$
	\begin{equation*}
	\int_{\mathbf{R}^{5}}y_{j}v(y)\phi(y){\rm d}y=0
	\end{equation*}
	which implies that $\phi\in S_{3}L^{2}$.
\end{proof}

{\bf Proof of Theorem \ref{classification-6} ($d=6$).} The proof of $d=6$ is divided into the following three lemmas, see Lemma \ref{s1-space} - \ref{s3-space}.
\begin{lemma}\label{s1-space}
	Suppose $|v(x)|\lesssim (1+ |x|)^{-\beta}$ with some $\beta>14$. Then $\phi\in S_1L^2\setminus\{0\}$ if and only if $\phi=Uv\psi$ for some $\psi\in W_{1}(\mathbf{R}^{6})\setminus\{0\}$  such that
	$[(-\Delta)^2+V]\psi=0$
	holds in the sense of distributions.
\end{lemma}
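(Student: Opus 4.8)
The plan is to follow the structure of the five-dimensional argument in Lemma \ref{S1}, simply replacing the kernel $G_0(x,y)=a_0|x-y|^{-1}$ of the $d=5$ free resolvent at zero by $G_0(x,y)=c_0|x-y|^{-2}$, which in $\mathbf{R}^6$ is, up to a nonzero constant, the kernel of the Riesz potential $I_4=(-\Delta)^{-2}$. Thus $G_0v=(\mathrm{const})\,I_4(v\,\cdot\,)$ and $(-\Delta)^2G_0=\mathrm{Id}$ in the distributional sense, and the characterization of $S_1L^2(\mathbf{R}^6)=\ker T_0$ reduces to the weighted mapping properties of $I_4$ furnished by Lemma \ref{Reisz-potential-boundedness}(2): since $d/2=3\le\alpha=4<6=d$, we have $I_4\in B(s,-s')$ whenever $s,s'>1$ and $s+s'\ge 4$.

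First I would prove the forward implication. If $\phi\in S_1L^2\setminus\{0\}$, then $(U+vG_0v)\phi=0$, so $U\phi=-vG_0v\phi$; setting $\psi:=-G_0v\phi$ gives $\phi=Uv\psi$. Using $(-\Delta)^2G_0(v\phi)=v\phi$ distributionally together with $V=Uv^2$ one computes
\[
[(-\Delta)^2+V]\psi=-v\phi+V\psi=-vUv\psi+Uv^2\psi=0,
\]
so $H\psi=0$ in $\mathcal{D}'(\mathbf{R}^6)$. To place $\psi$ in $W_1(\mathbf{R}^6)$, note that $|v(x)|\lesssim(1+|x|)^{-\beta/2}$ with $\beta>14$ forces $v\phi\in L^2_{\beta/2}$ with $\beta/2>7$; applying Lemma \ref{Reisz-potential-boundedness}(2) with $s=\beta/2$ and any $s'>1$ (for which $s+s'\ge4$ is automatic) yields $\psi=-G_0v\phi\in L^2_{-s'}$ for every $s'>1$, i.e. $\psi\in\cap_{s'>1}L^2_{-s'}=W_1(\mathbf{R}^6)$. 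Moreover $\psi\neq0$, since $\psi=0$ would give $\phi=Uv\psi=0$.

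Next I would prove the converse. Let $\psi\in W_1(\mathbf{R}^6)\setminus\{0\}$ solve $[(-\Delta)^2+V]\psi=0$ and set $\phi:=Uv\psi$. Pick $s>1$ with $\psi\in L^2_{-s}$; the decay $|v|\lesssim(1+|x|)^{-\beta/2}$ with $\beta/2>7$ gives $v\psi\in L^2_{\beta/2-s}\subset L^2$, hence $\phi\in L^2$. From $(-\Delta)^2\psi=-V\psi$ and $V\psi\in L^2_{\beta-s}$ (with $\beta>14$ large enough for $G_0$ to act) one inverts to get $\psi=-G_0V\psi$ in some $L^2_{-s'}$, $s'>1$; since $V\psi=Uv^2\psi=v(Uv\psi)=v\phi$ this reads $\psi=-G_0v\phi$, so $U\phi=U^2v\psi=v\psi=-vG_0v\phi$, i.e. $(U+vG_0v)\phi=0$ and $\phi\in S_1L^2$. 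Finally $\phi\neq0$: were $v\psi=0$, then $V\psi=0$ and $(-\Delta)^2\psi=0$, forcing $\psi$ to be a biharmonic polynomial; but no nonzero polynomial lies in $\cap_{s>1}L^2_{-s}(\mathbf{R}^6)$, contradicting $\psi\neq0$.

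The main obstacle is the weighted-space bookkeeping in the regularity step: one must verify that the exponents coming out of Lemma \ref{Reisz-potential-boundedness}(2) land exactly in $W_\sigma(\mathbf{R}^6)$ with the threshold $\sigma=4-d/2=1$ (so that $S_1L^2$ is detected by functions in $W_1$, not a smaller space), and, in the converse direction, that the identity $\psi=-G_0 V\psi$ is legitimate, which is precisely where the hypothesis $\beta>14$ is used to push $V\psi$ deep enough inside $L^2$. The remaining ingredients — the algebraic relations among $T_0$, $U$, $v$, $V$ and the computation $(-\Delta)^2G_0=\mathrm{Id}$ — are identical to those in the proof of Lemma \ref{S1}.
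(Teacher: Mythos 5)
Your proposal is correct and follows essentially the same route as the paper: both directions rest on the identity $\psi=-G_0v\phi=-G_0V\psi$ together with the mapping property $I_4\in B(s,-s')$ for $s,s'>1$, $s+s'\ge 4$ from Lemma \ref{Reisz-potential-boundedness}(2), which is exactly how the paper places $\psi$ in $W_1(\mathbf{R}^6)$. Your extra verification that $\phi\neq0$ (ruling out biharmonic polynomials in $W_1(\mathbf{R}^6)$) is a small refinement the paper leaves implicit.
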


\begin{proof}
	We first note that
	\[[(-\Delta)^2+V]\psi=0 \Leftrightarrow (I+G_0V)\psi=0.\]
	Firstly, if $\phi\in S_1L^2\setminus\{0\}.$ Then $(U+vG_0v)\phi=0,$ and multiplying by $U$, one has
	\[\phi(x)=-UvG_0v\phi=-c_0Uv(x)\int_{\mathbf{R}^6}\frac{v(y)\phi(y)}{|x-y|^2}{\rm d}y.\]
	Accordingly, we define
	\[\psi(x)=-G_0v\phi=c_1\int_{\mathbf{R}^6}\frac{v(y)\phi(y)}{|x-y|^2}{\rm d}y.\]
	Since $v\phi\in L^{2}_{\beta/2}(\mathbf{R}^6)$,  by Lemma \ref{Reisz-potential-boundedness} and the fact that $\psi=-G_{0}V\psi$, we have that $\psi\in W_{1}(\mathbf{R}^6)$. Further $\phi(x)=Uv(x)\psi(x)$ and
	\[\psi(x)=-G_0v\phi=-G_0V\psi(x) \Rightarrow (I+G_0V)\psi=0.\]
	
	Secondly, assume $\phi=Uv\psi$ for a non-zero distributional solution to $[(-\Delta)^2+V]\psi=0.$ It is clear that $\phi\in L^{2}(\mathbf{R}^6)$ and
	\[(U+vG_0v)\phi(x)=v(x)\phi(x)+v(x)G_0V\psi(x)=v(x)(I+G_0V)\psi(x)=0.\]
	Hence, $\phi\in S_1L^2\setminus\{0\}.$
\end{proof}

\begin{lemma}\label{s2-space}
	Suppose $|v(x)|\lesssim (1+|x|)^{-\beta}$ with some $\beta>14$. Then $\phi=Uv\psi\in S_2L^2\setminus\{0\}$ if and only if $\psi\in W_{0}(\mathbf{R}^{6})\setminus\{0\}$ .
\end{lemma}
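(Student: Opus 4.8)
The plan is to carry over, to dimension six, the three-step scheme used for $d=5$ in Lemma \ref{S2}, replacing the kernel $c_0|x-y|^{-1}$ by $G_0(x,y)=c_0|x-y|^{-2}$ (a constant multiple of $I_4=(-\Delta)^{-2}$ on $\mathbf{R}^6$) and shifting the relevant scale from $W_1$ to $W_0$. The first step is to reduce membership in $S_2L^2$ to a single vanishing-moment condition. By Definition \ref{resonance} and Remark \ref{properties-of-S-T}, $S_2$ is the orthogonal (Riesz) projection onto $\ker T_1$, where $T_1=S_1PS_1=(PS_1)^{*}(PS_1)\geq 0$; hence $\phi\in S_2L^2$ iff $\phi\in S_1L^2$ and $T_1\phi=0$, and pairing $T_1\phi$ with $\phi$ while using $S_1\phi=\phi$ gives $T_1\phi=0\Longleftrightarrow P\phi=0\Longleftrightarrow\int_{\mathbf{R}^6}v(y)\phi(y)\,dy=0$. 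Since $S_2\leq S_1$, Lemma \ref{s1-space} lets me assume throughout that $\phi\in S_1L^2$, that $\psi=-G_0v\phi$ and that $H\psi=0$; it then remains to show that $\psi\in W_0(\mathbf{R}^6)$ if and only if $\int_{\mathbf{R}^6}v\phi=0$.

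For the forward direction, assume $\int_{\mathbf{R}^6}v\phi=0$. I would subtract the $y$-independent term to write
\[
\psi(x)=-c_0\int_{\mathbf{R}^6}\Big[\frac{1}{|x-y|^{2}}-\frac{1}{(1+|x|)^{2}}\Big]v(y)\phi(y)\,dy,
\]
and then exploit the elementary bound $\big|(1+|x|)^{2}-|x-y|^{2}\big|=\big|1+2|x|+2x\cdot y-|y|^{2}\big|\lesssim(1+|x|)(1+|y|)^{2}$ to get
\[
\Big|\frac{1}{|x-y|^{2}}-\frac{1}{(1+|x|)^{2}}\Big|\lesssim\frac{(1+|y|)^{2}}{(1+|x|)\,|x-y|^{2}},
\]
so that $|\psi(x)|\lesssim(1+|x|)^{-1}\big(I_4[(1+|\cdot|)^{2}|v\phi|]\big)(x)$. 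As $\beta>14$, the density $(1+|\cdot|)^{2}|v\phi|$ lies in $L^2_s$ for every $s$, so Lemma \ref{Reisz-potential-boundedness}(2), applicable because $d/2\leq 4<d$ on $\mathbf{R}^6$, yields $I_4[(1+|\cdot|)^{2}|v\phi|]\in L^2_{-s'}$ for every $s'>1$; multiplying by $(1+|x|)^{-1}$ then places $\psi$ in $L^2_{-s}$ for every $s>0$, i.e. $\psi\in W_0(\mathbf{R}^6)$.

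For the converse, assume $\psi\in W_0(\mathbf{R}^6)\setminus\{0\}$ and $\phi=Uv\psi$. Since $W_0\subset W_1$, Lemma \ref{s1-space} already gives $\phi\in S_1L^2\setminus\{0\}$ together with $\psi=-c_0\int_{\mathbf{R}^6}|x-y|^{-2}v\phi\,dy$. Splitting off the leading term,
\[
\psi(x)=-c_0\int_{\mathbf{R}^6}\Big[\frac{1}{|x-y|^{2}}-\frac{1}{(1+|x|)^{2}}\Big]v(y)\phi(y)\,dy-\frac{c_0}{(1+|x|)^{2}}\int_{\mathbf{R}^6}v(y)\phi(y)\,dy,
\]
the first term is in $W_0$ by the estimate just established, whereas $\int_{\mathbf{R}^6}(1+|x|)^{-4-2s}\,dx<\infty$ exactly when $s>1$, so $(1+|x|)^{-2}\in W_1\setminus W_0$. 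Therefore $\psi\in W_0$ forces $\int_{\mathbf{R}^6}v\phi=0$, i.e. $P\phi=0$, and hence $\phi\in S_2L^2$ by the first step; this also matches the description in Proposition \ref{classification-6}.

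The step I expect to be the main obstacle is the forward estimate: one has to choose precisely the single subtraction that turns the vanishing moment into one extra factor $(1+|x|)^{-1}$, and then invoke the mapping properties of $I_4$ in the borderline dimension $d=6$, where $I_4$ maps $L^2_s$ into $L^2_{-s'}$ only for $s'>1$; it is the fast decay of $V$ that lets the weight $(1+|\cdot|)^{2}$ be absorbed without loss. Everything else is bookkeeping completely parallel to the $d=5$ case treated in Lemma \ref{S2}.
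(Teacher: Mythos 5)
Your proposal is correct and follows essentially the same route as the paper's proof: reduce $S_2$-membership to the vanishing moment $\int v\phi=0$ via $T_1=S_1PS_1\ge 0$, subtract the $y$-independent kernel $(1+|x|)^{-2}$, bound the difference kernel and invoke Lemma \ref{Reisz-potential-boundedness}, and in the converse direction use that $(1+|x|)^{-2}\in W_1\setminus W_0$ on $\mathbf{R}^6$ to force the moment to vanish. The only (harmless) deviations are cosmetic: you use a single-term bound $\lesssim(1+|y|)^2(1+|x|)^{-1}|x-y|^{-2}$ where the paper splits into two terms handled by $I_4$ and $I_5$, and your remark that $(1+|\cdot|)^2|v\phi|$ lies in $L^2_s$ for \emph{every} $s$ should be tempered to the range guaranteed by $\beta>14$, which is more than enough for the application of the Riesz-potential lemma.
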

\begin{proof}
	Assume first that $\psi\in S_2L^2\setminus\{0\},$ since $S_2\le S_1$. Using Lemma \ref{s1-space}, we need only to show that $\psi\in W_{0}(\mathbf{R}^{6}).$ Since $Pf=0,$ we have
	\[\int_{\mathbf{R}^6}v(y)f(y){\rm d}y=0.\]
	Then
	\[\psi(x)=-c_1\int_{\mathbf{R}^6}\Big[\frac{1}{|x-y|^2}-\frac{1}{1+|x|^2}\Big]v(y)f(y){\rm d}y.\]
	Using
	\[\Big|\frac{1}{|x-y|^2}-\frac{1}{1+|x|^2}\Big|
	\lesssim\frac{1+|y|}{(1+|x|)|x-y|^2}+\frac{1+|y|}{|x-y|(1+|x|)^2}\]
	and noting that $(1+|\cdot|)vf\in L^{2}_{4+}(\mathbf{R}^{6})$, the Riesz potential $I_4$ maps $L^{2}_{4+}(\mathbf{R}^{6})$ to $L^{2}_{-1-}(\mathbf{R}^{6})$ and $I_5$ maps $L^{2}_{4+}(\mathbf{R}^{6})$ to $L^{2}_{-2-}(\mathbf{R}^{6})$
	shows that $\psi\in W_{0}(\mathbf{R}^{6})$ as desired.\\
	\indent	On the other hand, if $\phi=Uv\psi$ as in hypothesis, we have
	\begin{equation}\label{eq-s2-1}
	\psi(x)=-c_1\int_{\mathbf{R}^6}\Big[\frac{1}{|x-y|^2}-\frac{1}{1+|x|^2}\Big]v(y)f(y){\rm d}y-\frac{c_1}{1+|x|^2}\int_{\mathbf{R}^6}v(y)f(y){\rm d}y.
	\end{equation}
	The first term and $\psi(x)$ are in $W_{0}(\mathbf{R}^{6})$,  thus we must have that
	\[-\frac{c_1}{1+|x|^2}\int_{\mathbf{R}^6}v(y)f(y){\rm d}y\in L^{2}_{0-}(\mathbf{R}^6),\]
	then we know that
	\[\int_{\mathbf{R}^6}v(y)f(y)dy=0,\]
	that is $0=Pf=S_1PS_1f$ and $f\in S_2L^2$ as desired.
\end{proof}

\begin{lemma}\label{s3-space}
	Suppose $v(x)\lesssim (1+|x|)^{-\beta}$ with some $\beta>14$. Then $\phi=Uv\psi\in S_3L^2\setminus\{0\}$ if and only $\psi \in L^{2}(\mathbf{R})\setminus\{0\}$.	
\end{lemma}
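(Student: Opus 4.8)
The plan is to establish both implications by identifying $S_3L^2=\ker T_2$, where $T_2=S_2vG_2vS_2$ and $G_2$ is the integral operator with kernel $|x-y|^4$, and then translating membership in $\ker T_2$ into vanishing first moments of $v\phi$. What makes $d=6$ genuinely different from the first two resonance levels is that the form $\langle G_2v\phi,v\phi\rangle=\iint v\phi(x)\,|x-y|^4\,v\phi(y)\,dx\,dy$ is not sign definite on all of $L^2$; the key claim is that it becomes definite once restricted to $S_2L^2$. Indeed, for $\phi\in S_2L^2$ one has $P\phi=0$, i.e.\ $\int v\phi=0$, hence $\widehat{v\phi}(0)=0$, and $v\phi=-(-\Delta)^2\psi$ with $\psi=-G_0v\phi$, hence $\widehat{v\phi}(\xi)=-|\xi|^4\widehat\psi(\xi)$. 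Pairing $v\phi\otimes v\phi$ with the expansion $R_0(\mu^4)=G_0+c_1\mu^2 I+\mu^4c(\mu)G_2+\mu^4G_3+O(\mu^{6-})$ and with Plancherel gives, after dividing by $\mu^4$, the identity
\[
 c(\mu)\langle G_2v\phi,v\phi\rangle+\langle G_3v\phi,v\phi\rangle+O(\mu^{2-})=\int_{\mathbf{R}^6}\frac{|\widehat{v\phi}(\xi)|^2}{(|\xi|^4-\mu^4)|\xi|^4}\,d\xi .
\]
As $\mu\to0$ the right side has a single logarithmic singularity, arising from $\xi=0$ and controlled by $|\nabla\widehat{v\phi}(0)|^2$; matching the coefficient of $\ln\mu$ (recall $c(\mu)=c_2\ln\mu+c_3$ with $c_2\ne0$) yields $\langle G_2v\phi,v\phi\rangle=\kappa\sum_{j=1}^{6}\bigl|\int_{\mathbf{R}^6}y_jv\phi\,dy\bigr|^2$ for an explicit $\kappa\ne0$ — the exact analogue of the computation in Lemma~\ref{stop}. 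Hence $T_2$ is semidefinite on $S_2L^2$, so $\langle T_2\phi,\phi\rangle=0\iff T_2\phi=0$, and
\[
 S_3L^2=\ker T_2=\Bigl\{\phi\in S_2L^2:\ \textstyle\int_{\mathbf{R}^6}y_jv(y)\phi(y)\,dy=0,\ j=1,\dots,6\Bigr\}.
\]

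Granting this, the forward implication follows together with Lemma~\ref{s2-space}: if $\phi=Uv\psi\in S_3L^2\setminus\{0\}$ then $\phi\in S_2L^2$, so $\psi=-G_0v\phi\in W_0(\mathbf{R}^6)$, $\int v\phi=0$, and moreover $\int y_jv\phi=0$ for all $j$. I would then argue as in the proof of Lemma~\ref{S3}: using these vanishing moments, write
\[
 \psi(x)=c_1\!\int_{\mathbf{R}^6}\Bigl[\tfrac1{|x-y|^2}-\tfrac1{(1+|x|)^2}-\tfrac{2x\cdot y}{(1+|x|)^4}-(\text{lower-order regularizing terms})\Bigr]v(y)\phi(y)\,dy,
\]
where every subtracted term integrates to $0$; the bracket is then $O\bigl((1+|y|)^2(1+|x|)^{-4}|x-y|^{-2}\bigr)$ plus faster-decaying contributions, so $|\psi(x)|\lesssim(1+|x|)^{-4}\,I_4\bigl[(1+|\cdot|)^2|v\phi|\bigr](x)$, and Lemma~\ref{Reisz-potential-boundedness} (with $\alpha=4$, $d=6$) gives $\psi\in L^2(\mathbf{R}^6)$. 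The hypothesis $\beta>14$ is used here to ensure $(1+|\cdot|)^2|v\phi|$ lies in the weighted space needed for Lemma~\ref{Reisz-potential-boundedness}.

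For the converse, let $\psi\in L^2(\mathbf{R}^6)\setminus\{0\}$. Since $L^2\subset W_0$, Lemma~\ref{s2-space} gives $\phi=Uv\psi\in S_2L^2$, hence $\int v\phi=0$. Expanding $\psi=-G_0v\phi$ for $|x|\gg1$, the slowest-decaying term is a nonzero multiple of $(1+|x|)^{-3}$ times the degree-one spherical harmonic $\widehat x\mapsto\widehat x\cdot\int_{\mathbf{R}^6}y\,v\phi\,dy$, which is not square integrable on $\mathbf{R}^6$ unless that vector vanishes; therefore $\int y_jv\phi\,dy=0$ for all $j$, and by the description of $\ker T_2$ above, $\phi\in S_3L^2$. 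That $\phi\ne0\iff\psi\ne0$ is clear from $\psi=-G_0v\phi$ and $\phi=-UvG_0v\phi$.

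The main obstacle is the first step: showing that the kernel $|x-y|^4$, which gives an indefinite quadratic form in general, collapses on $S_2L^2$ to a definite multiple of $\sum_j|\int y_jv\phi|^2$. A bare multinomial expansion of $|x-y|^4$ is not enough at $d=6$ (it produces an indefinite combination of second- and first-moment terms, in contrast with the $d=5$ kernel $|x-y|^2$, which already expands to $-2\sum_j|\int y_jv\phi|^2$); one must exploit $v\phi=-(-\Delta)^2\psi$ and extract the $\mu^4\ln\mu$ coefficient from the limiting-absorption/Plancherel comparison. Once the moment conditions are secured, the decay estimates of the second paragraph are routine, modulo the bookkeeping of the regularizing terms and the role of the decay rate $\beta$.
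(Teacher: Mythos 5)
Your overall architecture---reduce membership in $S_3L^2$ to the vanishing of the first moments $\int_{\mathbf{R}^6}y_jv(y)\phi(y)\,dy$, then show that $\psi=-G_0v\phi$ lies in $L^2(\mathbf{R}^6)$ exactly when those moments (together with $\int v\phi=0$) vanish, via subtracted kernel expansions and Lemma \ref{Reisz-potential-boundedness}---is the paper's, and your second and third paragraphs reproduce its proof in both directions. The genuine gap is in your first step. The identity you assert, namely $\langle G_2v\phi,v\phi\rangle=\kappa\sum_j\bigl|\int y_jv\phi\,dy\bigr|^2$ for the kernel $|x-y|^4$ on $\{\int v\phi=0\}$, is false: expanding $|x-y|^4=(|x|^2-2x\cdot y+|y|^2)^2$ and using only $\int v\phi=0$ gives
\[
\langle G_2v\phi,v\phi\rangle=4\sum_{j,k}\Bigl|\int x_jx_k\,v\phi\Bigr|^2+2\Bigl|\int|x|^2v\phi\Bigr|^2-8\,\mathrm{Re}\sum_j\overline{\Bigl(\int|x|^2x_j\,v\phi\Bigr)}\int y_j\,v\phi ,
\]
which is not a multiple of $\sum_j|\int y_jv\phi|^2$ (take any $g$ with vanishing zeroth and first moments but nonzero second moments). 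Your Plancherel/limiting-absorption derivation uses nothing about $\phi$ beyond $\widehat{v\phi}(0)=0$ and decay, so if it were sound it would prove this false identity for every such $g$. What it actually detects is that Lemma \ref{free-expansions}(2) cannot be correct as printed: by the scaling identity $R_0(\mu^4;x,y)=\mu^{d-4}K(\mu|x-y|)$, a term $\mu^4\ln\mu\cdot|x-y|^a$ can occur only with $a=8-d$, which is $2$ in dimension six. The $\mu^4\ln\mu$ coefficient is therefore a multiple of $|x-y|^{2}$, and the ``$|x-y|^4$'' (resp.\ ``$|x-y|^4\ln|x-y|$'') in the displayed $G_2$, $G_3$ for $d=6$ is a misprint for $|x-y|^{2}$ (resp.\ $|x-y|^{2}\ln|x-y|$).

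With the corrected kernel the step you flagged as the main obstacle disappears: on $\{\int v\phi=0\}$ one has $\iint v\phi(x)\,|x-y|^2\,v\phi(y)\,dx\,dy=-2\sum_j\bigl|\int y_jv\phi\,dy\bigr|^2$, exactly as in $d=5$, and this is literally the computation the paper's proof performs (it expands $|x|^2-2x\cdot y+|y|^2$). So no LAP/Plancherel detour is needed, and the one you propose does not close as written---it rests on an expansion that is dimensionally impossible and yields a conclusion contradicted by your own multinomial computation. Everything downstream of the moment characterization in your proposal is fine and matches the paper.
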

\begin{proof}
	Assume first that $\phi\in S_3L^2\setminus\{0\}$.  Since $S_3\le S_2\le S_1$,  from Lemma \ref{s1-space} and Lemma \ref{s2-space}, we need only to show that $\psi\in L^2(\mathbf{R}^{6})$. By the definition of $S_3,$ we know that $\phi$ is in the kernel of $S_2vG_3vS_2,$ that is $S_2vG_3vf=0,$ then we have
	\begin{align*}
	0&=\langle S_2vG_3vf, f\rangle=\langle G_3vf, vf\rangle\\
	&=\int_{\mathbf{R}^6}dx\int_{\mathbf{R}^6}v(x)f(x)[|x|^2-2x\cdot  y+|y|^2]v(y)f(y){\rm d}y
	\end{align*}
	Since $Pf=0,$ that is
	\[\int_{\mathbf{R}^6}v(y)f(y){\rm d}y=0\]
	hence we can obtain that
	\[-2\int_{\mathbf{R}^6}v(x)f(x)dx\int_{\mathbf{R}^6}x\cdot yv(y)f(y){\rm d}y=-2\Bigg[\int_{\mathbf{R}^6}yv(y)f(y){\rm d}y\Bigg]^2=0\]
	which gives
	\[\int_{\mathbf{R}^6}yv(y)f(y){\rm d}y=0.\]
	Then,
	\[\psi(x)=-c_1\int_{\mathbf{R}^6}\Big[\frac{1}{|x-y|^2}-\frac{1}{1+|x|^2}-\frac{x\cdot y}{(1+|x|^2)^2}
	\Big]v(y)f(y){\rm d}y.\]
	Since
	\[\Big|\frac{1}{|x-y|^2}-\frac{1}{(1+|x|)^2}-\frac{x\cdot y}{(1+|x|)^2}\Big|\lesssim\frac{(1+|y|)^2}{|x-y|^2(1+|x|)^2}+\frac{(1+|y|)^3}{|x-y|^2(1+|x|)^3}\]
	and noting that $(1+|\cdot|)^3vf\in L^{2}_{2+}(\mathbf{R}^{6}),\,  (1+|\cdot|)^2vf\in L^{2}_{3+}(\mathbf{R}^{6}),$ Lemma \ref{Reisz-potential-boundedness} shows that $\psi\in L^2(\mathbf{R}^{6})$ as desired.\\
	\indent On the other hand, if $\phi=Uv\psi$ as in hypothesis,according to the proof in Lemma \ref{s2-space}, we can similarly prove that
	\[\int_{\mathbf{R}^6}v(y)f(y){\rm d}y=0,\ \ \int_{\mathbf{R}^6}yv(y)f(y){\rm d}y=0\]
	which means that $S_1PS_1f=0$ and $S_2vG_3vS_2f=0,$ that is $f\in S_3L^2.$
\end{proof}

\begin{proof}[{\bf Proof of Theorem \ref{classification-7} ($d=7$).} ]
(1) Since $\phi\in S_{1}L^{2}$, then
\begin{equation*}
0=(U+vG_{0}v)\phi=U\phi+vG_{0}v\phi,
\end{equation*}
so that $\phi=Uv(-G_{0}v\phi)$.

Since $(-\Delta)^2G_{0}v\phi=v\phi$ distributional, then
\begin{equation*}
[(-\Delta)^2+V](-G_{0}v\phi)=-v\phi+V(-G_{0}v\phi)=-vUv\psi+V\psi=0.
\end{equation*}
Now, we show $\psi=-G_{0}v\phi\in W_{1/2}(\mathbf{R}^{7})$ . Since
\begin{equation*}
\begin{split}
G_{0}v\phi=&c\int_{\mathbf{R}^{7}}|x-y|^{-3}v(y)\phi(y){\rm d}y=I_{4}v(y)\phi(y).
\end{split}
\end{equation*}
Hence, by Lemma \ref{Reisz-potential-boundedness} and  $\psi=-G_{0}V\psi$, we know $-G_{0}v\phi\in W_{1/2}(\mathbf{R}^{7})$.

Notice that, since $\frac{1}{|x-y|^3}\sim \frac{1}{|x|^3}$ for $|x|\gg1$, thus the rate of the weight $-1/2-$ is sharp.

(2) First we show that if $\phi\in S_{2}L^2$ then $\psi\in L^{2}(\mathbf{R}^{7})$. Since $\phi\in S_{2}L^{2}$, then $S_{1}P\phi=0$ which implies that $P\phi=0$. Indeed, $0=\langle S_{1}P\phi, \phi\rangle=\langle P\phi, S_{1}\phi\rangle=\langle P\phi, P\phi\rangle$.

Using $S_{2}\leq S_{1}$ and $P\phi=0$, then
\begin{equation*}
\begin{split}
G_{0}v\psi=&\int_{\mathbf{R}^{7}}\bigg[\frac{1}{|x-y|^3}-\frac{1}{(1+|x|)^{3}}\bigg]v(y)\phi(y){\rm d}y\\
\lesssim&\int_{\mathbf{R}^{7}}\frac{(1+|y|)^{2}(1+|x|)+(1+|y|)^{3}}{|x-y|^{3}(1+|x|)^{3}}\Big|v(y)\phi(y)\Big|{\rm d}y.
\end{split}
\end{equation*}
Thus by Lemma \ref{Reisz-potential-boundedness}, we know $\psi\in L^{2}(\mathbf{R}^{7})$.

Next, we assume $\psi\in L^{2}(\mathbf{R}^{7})$ and $\phi=Uv\psi$ and show
\begin{equation}\label{S2-7}
\int_{\mathbf{R}^{7}}v(y)\phi(y){\rm d}y=0.
\end{equation}
Since
\begin{equation*}
\begin{split}
\psi=\int_{\mathbf{R}^{7}}\bigg[\frac{1}{|x-y|^{3}}-\frac{1}{(1+|x|)^3}\bigg]v(y)\phi(y){\rm d}y+\int_{\mathbf{R}^{7}}\frac{1}{(1+|x|)^3}v(y)\phi(y){\rm d}y.
\end{split}
\end{equation*}
If $\psi\in L^{2}(\mathbf{R}^{7})$, then
\begin{equation*}
\frac{1}{(1+|x|)^{3}}\int_{\mathbf{R}^{7}}v(y)\phi(y){\rm d}y\in L^{2}(\mathbf{R}^{7}).
\end{equation*}
Hence \eqref{S2-7} holds and $\phi\in S_{2}L^{2}$.
\end{proof}

\begin{proof}[{\bf Proof of Theorem \ref{classification-8} ($d=8$).}]
	(1) We first note that
	\[[(-\Delta)^2+V]g=0 \Leftrightarrow (I+G_0V)g=0.\]
	Firstly, if $\phi\in S_1L^2\setminus\{0\}.$ Then $(U+vG_0v)f=0,$ and one has
	\[\phi(x)=-UvG_0v\phi=-c_1Uv(x)\int_{\mathbf{R}^8}\frac{v(y)f(y)}{|x-y|^4}{\rm d}y.\]
	Accordingly, we define
	\[\psi(x)=-G_0vf=c_1\int_{\mathbf{R}^8}\frac{v(y)f(y)}{|x-y|^4}{\rm d}y.\]
	Since $vf\in L^{2}_{3+}(\mathbf{R}^8),$ we have that $\psi\in W_{0}(\mathbf{R}^8)$ by  Lemma \ref{Reisz-potential-boundedness}. Further $\phi(x)=Uv(x)\psi(x)$ and
	\[\psi(x)=-G_0v\phi=-G_0V\psi(x) \Rightarrow (I+G_0V)\psi=0.\]
	Secondly, assume $\phi=Uv\psi$ for a non-zero distributional solution to $[(-\Delta)^2+V]\psi=0.$ It is clear that $\phi\in L^{2}_{4}(\mathbf{R}^8)$ and now
	\[(U+vG_0v)\phi(x)=v(x)\psi(x)+v(x)G_0V\psi(x)=v(x)(I+G_0V)\psi(x)=0.\]
	Hence, $\phi\in S_1L^2\setminus\{0\}.$

	(2) Assume first that $\phi\in S_2L^2\setminus\{0\}$. Since $S_2\le S_1$ and $P\phi=0$, then
	\[\psi(x)=-c_1\int_{\mathbf{R}^8}\Big[\frac{1}{|x-y|^4}-\frac{1}{1+|x|^4}\Big]v(y)f(y){\rm d}y.\]
	Using
	\[\Big|\frac{1}{|x-y|^4}-\frac{1}{1+|x|^4}\Big|
	\lesssim\frac{(1+|y|)^4}{(1+|x|)^4|x-y|^4}+\frac{(1+|y|)^2}{|x-y|^4(1+|x|)^2}+\frac{1+|y|}{|x-y|^4(1+|x|)},\]
	then Lemma \ref{Reisz-potential-boundedness} shows that $\psi\in L^{2}(\mathbf{R}^8)$ as desired.\\
	\indent	On the other hand, if $\phi=Uv\psi$ as in hypothesis, we have
	\begin{equation}\label{eq-s2-1-8}
	\psi(x)=-c_1\int_{\mathbf{R}^8}\Big[\frac{1}{|x-y|^4}-\frac{1}{1+|x|^4}\Big]v(y)f(y){\rm d}y-\frac{c_1}{1+|x|^4}\int_{\mathbf{R}^8}v(y)f(y){\rm d}y.
	\end{equation}
	The first term and $\psi(x)$ are in $W_{0}(\mathbf{R}^8),$ thus we must have that
	\[-\frac{c_1}{1+|x|^4}\int_{\mathbf{R}^8}v(y)f(y){\rm d}y\in W_{0}(\mathbf{R}^8),\]
	then
	\[\int_{\mathbf{R}^8}v(y)f(y){\rm d}y=0,\]
	that is $0=P\phi=S_1PS_1\phi$ and $\phi\in S_2L^2$ as desired.
\end{proof}	

Next, we show that for $d\geq9$, zero is only possible to be an eigenvalue of $H=(-\Delta)^2+V$.
\begin{proof}[\bf  Proof of Theorem \ref{classification-9} ($d\geq9$).]
	Since $\phi\in S_{1}L^{2}$, then
	\begin{equation*}
	0=(U+vG_{0}v)\phi=U\phi+vG_{0}v\phi,
	\end{equation*}
	so that $\phi=Uv(-G_{0}v\phi)$.
	
	Since $(-\Delta)^2G_{0}v\phi=v\phi$ distributionally, then
	\begin{equation*}
	[(-\Delta)^2+V](-G_{0}v\phi)=-v\phi+V(-G_{0}v\phi)=-vUv\psi+V\psi=0.
	\end{equation*}
	Now, we show $\psi\in L^{2}(\mathbf{R}^{d})$.  We need to show $-G_{0}v\phi\in L^{2}(\mathbf{R}^{d})$. Since
	\begin{equation*}
	\begin{split}
	G_{0}v\phi=&c\int_{\mathbf{R}^{d}}|x-y|^{4-d}v(y)\phi(y){\rm d}y=I_{4}v(y)\phi(y).
	\end{split}
	\end{equation*}
	Hence, by Lemma \ref{Reisz-potential-boundedness}, $-G_{0}v\phi\in L^{2}(\mathbf{R}^{d})$.
\end{proof}

{\bf Acknowledgements:} The authors were supported by  NSFC (11771165) and the program for Changjiang Scholars and Innovative Research Team in University (IRT13066). We would like to thank Professor  Avy  Soffer for his helps and useful discussions.

\end{document}